\documentclass[11pt, reqno]{amsart}

\textwidth15cm
\usepackage[margin=3cm]{geometry}

\usepackage[OT2, T1]{fontenc}
\usepackage[usenames,dvipsnames]{color}
\usepackage{amsmath}
\usepackage{amsthm}
\usepackage{mathabx}
\usepackage[mathscr]{eucal}
\usepackage[absolute]{textpos} 
\usepackage{colortbl}
\usepackage{array}
\usepackage{geometry}
\usepackage{hyperref}
\usepackage{epigraph}
\usepackage{amscd}
\usepackage[all,cmtip]{xy}
\usepackage{tikz-cd}
\usepackage{enumitem}
\usepackage{dynkin-diagrams}
\usepackage{cleveref}
\usepackage{mathrsfs}
\usepackage[new]{old-arrows}

\newtheoremstyle{thms}{0.2em}{0.2em}{\itshape}{}{\bfseries}{.}{ }{}
\theoremstyle{thms}

\usepackage[american]{babel}

\usepackage[citestyle=alphabetic,bibstyle=alphabetic]{biblatex}

\addbibresource{ref.bib}

\newtheoremstyle{thms}{0.2em}{0.2em}{\itshape}{}{\bfseries}{.}{ }{}

\theoremstyle{plain}

\theoremstyle{definition}

\newtheorem{theorem}{Theorem}[section]
\newtheorem{lemma}[theorem]{Lemma}

\newtheorem{definition}[theorem]{Definition}
\newtheorem{proposition}[theorem]{Proposition}
\newtheorem{remark}[theorem]{Remark}

\newtheorem{construction}[theorem]{Construction}
\newtheorem{definition-proposition}[theorem]{Definition-Proposition}

\makeatletter
\newcommand*{\relrelbarsep}{.386ex}
\newcommand*{\relrelbar}{%
  \mathrel{%
    \mathpalette\@relrelbar\relrelbarsep
  }%
}
\newcommand*{\@relrelbar}[2]{%
  \raise#2\hbox to 0pt{$\m@th#1\relbar$\hss}%
  \lower#2\hbox{$\m@th#1\relbar$}%
}
\providecommand*{\rightrightarrowsfill@}{%
  \arrowfill@\relrelbar\relrelbar\rightrightarrows
}
\providecommand*{\leftleftarrowsfill@}{%
  \arrowfill@\leftleftarrows\relrelbar\relrelbar
}
\providecommand*{\xrightrightarrows}[2][]{%
  \ext@arrow 0359\rightrightarrowsfill@{#1}{#2}%
}
\providecommand*{\xleftleftarrows}[2][]{%
  \ext@arrow 3095\leftleftarrowsfill@{#1}{#2}%
}
\makeatother

\makeatletter
\newcommand*{\da@rightarrow}{\mathchar"0\hexnumber@\symAMSa 4B }
\newcommand*{\da@leftarrow}{\mathchar"0\hexnumber@\symAMSa 4C }
\newcommand*{\xdashrightarrow}[2][]{%
  \mathrel{%
    \mathpalette{\da@xarrow{#1}{#2}{}\da@rightarrow{\,}{}}{}%
  }%
}
\newcommand{\xdashleftarrow}[2][]{%
  \mathrel{%
    \mathpalette{\da@xarrow{#1}{#2}\da@leftarrow{}{}{\,}}{}%
  }%
}
\newcommand*{\da@xarrow}[7]{%
  \sbox0{$\ifx#7\scriptstyle\scriptscriptstyle\else\scriptstyle\fi#5#1#6\m@th$}%
  \sbox2{$\ifx#7\scriptstyle\scriptscriptstyle\else\scriptstyle\fi#5#2#6\m@th$}%
  \sbox4{$#7\dabar@\m@th$}%
  \dimen@=\wd0 %
  \ifdim\wd2 >\dimen@
    \dimen@=\wd2 %
  \fi
  \count@=2 %
  \def\da@bars{\dabar@\dabar@}%
  \@whiledim\count@\wd4<\dimen@\do{%
    \advance\count@\@ne
    \expandafter\def\expandafter\da@bars\expandafter{%
      \da@bars
      \dabar@ 
    }%
  }%
  \mathrel{#3}%
  \mathrel{%
    \mathop{\da@bars}\limits
    \ifx\\#1\\%
    \else
      _{\copy0}%
    \fi
    \ifx\\#2\\%
    \else
      ^{\copy2}%
    \fi
  }%
  \mathrel{#4}%
}
\makeatother

\DeclareMathOperator{\Ad}{\mathrm{Ad}}			
\DeclareMathOperator{\ad}{\mathrm{ad}}
\DeclareMathOperator{\Dom}{Dom} 
\DeclareMathOperator{\Diag}{Diag}

\DeclareMathOperator{\Id}{Id} 

\DeclareMathOperator{\Hom}{Hom}

\DeclareMathOperator{\Norm}{Norm}	

\DeclareMathOperator{\pr}{pr}	


\DeclareMathOperator{\SL}{SL}			
 
\DeclareMathOperator{\Sch}{Sch} 
\DeclareMathOperator{\Spec}{Spec}		


\title{Toroidal embedding of Chevalley groups over $\mathbb{Z}$}
\author{Shang Li}
\address{Tsinghua University, Yau Mathematical Sciences Center, Beijing, China}
\email{shangli@tsinghua.edu.cn}
\date{\today}

\begin{document}

\setcounter{tocdepth}{1}

\maketitle

\pagestyle{plain}

\begin{abstract}
    The classification of equivariant toroidal embeddings of a reductive group over an algebraically closed field is combinatorial and does not depend on the characteristic of the base field. This suggests that there should exist ``universal'' toroidal embeddings for a Chevalley group scheme over $\mathbb{Z}$ which specialize to classical toroidal embeddings via base change. In this paper, we establish the existence of ``universal'' equivariant toroidal embeddings for split reductive group schemes over $\mathbb{Z}$. We also discuss several geometric properties of these embeddings.
\end{abstract}

\tableofcontents

\section{Introduction}

Reductive groups over an algebraically closed field are classified by reduced root data. This fact is known as the Existence and Isomorphism Theorems. This classification does not depend on the characteristic of the base field because reduced root data have nothing to do with the base field. This remarkable feature suggests that there should exist a universal version of the Existence and Isomorphism Theorems over $\mathbb{Z}$. They are established in \cite[exposé~ XXIII, XXV]{SGA3III}. The Existence Theorem over $\mathbb{Z}$ is more involved and its proof in SGA3 depends on the Existence Theorem over an algebraically closed field of characteristic $0$ which is due to Chevalley \cite{bible}. Chevalley constructs adjoint semisimple split group schemes over $\mathbb{Z}$ before \cite{SGA3II} via representation theory, cf., \cite{Chevalleyexistence}. Lusztig gives a different proof of the Existence Theorem over $\mathbb{Z}$ by using the theory of quantum groups \cite{Lusztigexistence}, which does not rely on Chevalley's work in characteristic $0$.

For a reductive group over an algebraically closed field, the classification of the toroidal embeddings of the reductive group also does not depend on the base field and is totally combinatorial. This suggests to us that there should also exist a universal theory of toroidal embeddings for Chevalley group schemes over $\mathbb{Z}$. The main goal of this paper is to show the following result which should be viewed as the Existence Theorem over $\mathbb{Z}$ for toroidal embeddings.

Let $G$ be a split reductive group scheme over $\mathbb{Z}$, and let $T\subset B\subset G$ be a maximal split torus (as in \cite[exposé~XXII, définition~1.13]{SGA3III}) contained in a Borel subgroup of $G$.

\begin{theorem}\label{intromaintheorem}
    (\Cref{maintheorem}) For a fan $\Sigma\subset X_{\ast}(T)_{\mathbb{R}}$ supported in the negative Weyl chamber $\mathfrak{C}$ defined by $B$, there exists a scheme $X_{\Sigma}$ over $\Spec(\mathbb{Z})$ such that
    \begin{itemize}
        \item[(1)] $X_{\Sigma}$ is equipped with an action of $G\times_{\mathbb{Z}} G$;
        \item[(2)] $X_{\Sigma}$ contains $G$ as an open dense subscheme and the action of $G\times_{\mathbb{Z}} G$ on $X_{\Sigma}$ extends the left and right translations of $G$ on itself;
        \item[(3)] for any algebraically closed field $k$, the open immersion obtained by base change
        $$G_k\longrightarrow(X_{\Sigma})_k$$ 
        is identified with the classical toroidal embedding of $G_k$ corresponding to $\Sigma$, where we implicitly use the natural identification $X_{\ast}(T)\cong X_{\ast}(T_k)$.
    \end{itemize}    
\end{theorem}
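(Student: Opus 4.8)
The plan is to reduce the construction to toric geometry over $\mathbb{Z}$ together with the group-theoretic ``big cell'', and then to spread the resulting local model out by the $G\times_{\mathbb{Z}}G$-action. Recall that the open Bruhat cell $\Omega = U^-\cdot T\cdot U\subset G$ is, over $\mathbb{Z}$, an open subscheme on which multiplication induces an isomorphism $U^-\times T\times U\xrightarrow{\sim}\Omega$ (here $U,U^-$ denote the unipotent radicals of $B,B^-$, which are affine spaces over $\mathbb{Z}$). A direct computation shows that $B^-\times B$ stabilises $\Omega$ and that the projection $\Omega\to T$, $u^-tu\mapsto t$, is equivariant for the action of $T\times T$ on $T$ given by $(t_1,t_2)\cdot t = t_1tt_2^{-1}$; the induced quotient torus $(T\times T)/T_{\mathrm{diag}}$ is canonically $T$, whose cocharacter lattice is exactly the $X_\ast(T)$ in which $\Sigma$ lives. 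This is the source of the combinatorics, and it is the local model I would globalise.

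First I would build the local charts over $\mathbb{Z}$. For a cone $\sigma\in\Sigma$ set $T_\sigma = \Spec\big(\mathbb{Z}[\sigma^\vee\cap X^\ast(T)]\big)$, the affine toric variety over $\mathbb{Z}$; it is flat over $\Spec(\mathbb{Z})$ with reduced, irreducible geometric fibres equal to the classical affine toric varieties, and it carries a $T$-action extending translation. I would then form
\[
X_\sigma^0 \;:=\; U^-\times T_\sigma\times U,
\]
extend the $B^-\times B$-action on $\Omega$ to $X_\sigma^0$ (acting through the toric $T$-action on the middle factor), and glue the $X_\sigma^0$ along the faces of $\Sigma$ exactly as one glues the affine charts $T_\sigma$ into the toric variety $Z_\Sigma$. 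This yields a $B^-\times B$-stable scheme $X_\Sigma^0\cong U^-\times Z_\Sigma\times U$ over $\mathbb{Z}$, flat with the expected geometric fibres, which will be the ``toroidal big cell'' of $X_\Sigma$ and which already contains $\Omega$ as an open dense subscheme.

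The main work is to promote $X_\Sigma^0$ to a $G\times G$-variety $X_\Sigma$ with $G\cdot X_\Sigma^0\cdot G = X_\Sigma$. I would cover $X_\Sigma$ by the finitely many translates $(\dot w,\dot{w}')\cdot X_\Sigma^0$, $w,w'\in W$, where $\dot w\in N_G(T)(\mathbb{Z})$ are chosen representatives, mirroring the cover of $G$ by the $N_G(T)\times N_G(T)$-translates of $\Omega$. The content is to write down the transition isomorphisms between overlaps and to check the cocycle condition over $\mathbb{Z}$: on the open subscheme $\Omega$ the gluing is forced by the group law, and the point is that the resulting rational maps extend across the toric boundary, the extension being governed by the integral Chevalley structure constants (commutation relations in the $U_\alpha$ and the $T$-action on them). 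The hypothesis that $\Sigma$ is supported in the negative Weyl chamber $\mathfrak{C}$ is exactly the condition --- as in the classical theory --- that makes these extensions regular and the glued scheme separated, because it guarantees that the relevant limits along antidominant one-parameter subgroups exist inside the charts. Equivalently, one may realise $X_\Sigma$ as a scheme-theoretic closure of $G$ inside a product of projectivised endomorphism spaces $\mathbb{P}(\End(V(\lambda_i)))$ of Weyl modules $V(\lambda_i)$ (one $\lambda_i$ per ray of $\Sigma$, plus a toric factor), which makes the $G\times G$-action manifest; here one must use that Weyl modules are finite free over $\mathbb{Z}$ and compatible with base change, and control the closure via good filtrations so that it remains $\mathbb{Z}$-flat. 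I expect this globalisation --- producing an honest $G\times G$-action over $\mathbb{Z}$ rather than over a field, where Sumihiro-type equivariant completion is unavailable --- to be the main obstacle.

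Finally I would verify (1)--(3). Properties (1) and (2) hold by construction: the $G\times G$-action and the open dense inclusion of $G$ (hence of $\Omega$) are built into the cover, and they restrict to the left/right translations on $G$. For (3), flatness of $X_\Sigma$ over $\Spec(\mathbb{Z})$ is checked chart by chart, each chart being (a gluing of) products of affine spaces with affine toric varieties, all $\mathbb{Z}$-flat; flatness and the chart descriptions are preserved under the base change to any algebraically closed field $k$. Thus $(X_\Sigma)_k$ is covered by the charts $U^-_k\times (T_\sigma)_k\times U_k$ glued by the reductions of the same transition maps. Since the classical toroidal embedding of $G_k$ attached to $\Sigma$ is covered by precisely these charts with the same transition data, the canonical open immersion $G_k\hookrightarrow (X_\Sigma)_k$ is identified with the classical one, using the identification $X_\ast(T)\cong X_\ast(T_k)$. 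The only delicate point in (3) is to be sure that no base-change pathology (non-reducedness or extra components in small characteristic) is introduced; this is again controlled by the $\mathbb{Z}$-flatness of the charts established in the globalisation step.
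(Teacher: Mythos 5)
Your overall geometry matches the paper's starting point---the toroidal big cell $\overline{\Omega_{\Sigma}}=U^-\times \overline{T_{\Sigma}}\times U^+$ over $\mathbb{Z}$ as local model, globalized through the $G\times G$-action---but the proposal has a genuine gap precisely at the step you yourself call ``the main obstacle'': you never construct the gluing. Presenting $X_{\Sigma}$ as a union of translates $(\dot w,\dot w')\cdot X_{\Sigma}^0$ requires (a) specifying, over $\mathbb{Z}$, the open overlaps inside each copy of $X_{\Sigma}^0$, (b) proving that the birational transition maps forced by the group law on $\Omega$ extend to \emph{regular} isomorphisms of these overlaps across the toric boundary, and (c) checking the cocycle condition and separatedness of the result. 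Item (b) is not a consequence of the heuristic ``$\Sigma\subset\mathfrak{C}$ makes the antidominant limits exist''; it is the actual theorem, and it is where all the integral Chevalley-system arithmetic lives. In the paper this content is \Cref{theoremrationalaction}, whose proof occupies \Cref{singlereflectrational}--\Cref{extensionofmultiplication}: conjugation by each $n_{\alpha_i}$ is extended to the explicit open $D_{\overline{\Omega_{\sigma}}}(e(-\alpha_i)+xy)$ with denominators computed from the Chevalley system, these are composed to handle $n_0$, and the key statement $e\times_S\overline{T_{\sigma}}\times_S e\subset\Dom(\Theta)$ for the reversed multiplication $U^+\times_S\overline{T_{\sigma}}\times_S U^-\dashrightarrow\overline{\Omega_{\sigma}}$ is proved via reduction to strictly Henselian bases and Artin-type density of sections. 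Your plan presupposes the output of this analysis rather than supplying it. Note also that the paper deliberately avoids (a) and (c): instead of gluing translates by hand, it forms the fppf quotient sheaf $(G\times_S\overline{\Omega_{\sigma}}\times_S G)/\sim_{\mathcal{A}_{\sigma}}$ (the Weil--Artin birational-group-law mechanism of \Cref{sectionrationalaction}), obtains an algebraic space by general theory, recovers the covering by translates of $\overline{\Omega_{\sigma}}$ only a posteriori and only over strictly Henselian bases (\Cref{covering}, using arbitrary sections of $G\times_S G$, not just Weyl representatives), proves schemeness via quasi-projectivity through an affine morphism to the integral wonderful compactification (\Cref{quasiprojectivity}), and only then glues along faces of the fan (\Cref{openimmersion}, \Cref{compatibility}).

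Your fallback construction---the schematic closure of $G$ in products $\mathbb{P}(\End(V(\lambda_i)))$ of Weyl modules---cannot prove the theorem in the stated generality: it produces only quasi-projective embeddings, whereas the statement concerns arbitrary fans supported in $\mathfrak{C}$, and general toroidal embeddings (like general toric varieties) need not be quasi-projective. This is exactly the limitation of the canonical-bases approach of \cite{bao2025dualcanonicalbasesembeddings} that the paper's introduction contrasts itself with. Even in the projective case, the fibral claim (3) does not follow from $\mathbb{Z}$-flatness alone, since the fiber of a schematic closure may strictly contain the closure of the fiber; controlling this is an additional argument, not a routine check.
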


For affine and projective toroidal embeddings, their integral models over $\Spec(\mathbb{Z})$ are also constructed in \cite{bao2025dualcanonicalbasesembeddings} by using the theory of canonical bases of quantum groups. Compared with \emph{loc. cit.}, our approach produces universal $\mathbb{Z}$-models for arbitrary (not necessarily affine or projective) toroidal embeddings of reductive groups in a uniform way. In \cite{li2023equivariant}, a universal $\mathbb{Z}$-model of wonderful compactification is constructed.

For the construction of $X_{\Sigma}$ in \Cref{intromaintheorem}, like a general toric variety is obtained by gluing together affine toric varieties, we use Zariski gluing to reduce to the case when $\Sigma$ is a cone. In this case, we will construct $X_{\Sigma}$ as an fppf quotient sheaf over the category of ($\mathbb{Z}$-)schemes. The quotient sheaf is formed with an intention to imitate the gluing procedure in the big cell structure of classical toroidal embeddings, whose idea goes back to Weil about enlarging an algebraic group germ to an algebraic group. We show that this quotient sheaf is a quasi-projective algebraic space over $\mathbb{Z}$. We remark that it is the sheaf-theoretic feature of our construction of $X_{\Sigma}$ that essentially gives the fibral condition (3) of \Cref{intromaintheorem}.

The geometry of the scheme $X_{\Sigma}$ is largely controlled by the combinatorial feature of the fan $\Sigma$, see \Cref{geometrycombinatoricis} for more details.

\subsection{Notation and conventions}
  For a group scheme $G$ over a scheme $S$, we will denote by $e\in G(S)$ the identity section.

 We use a dotted arrow to depict a rational morphism. For an $S$-rational morphism $f$ between two schemes $X$ and $Y$ over a scheme $S$, a test $S$-scheme $S'$ and a section $x\in X(S')$, we say that $x$ is well defined with respect to $f$ if the image of $x$ in $X_{S'}$ lies in the definition domain of $f_{S'}$. When $Y$ is separated over $S$, we denote by $\Dom(f)$ the definition domain of $f$. For the existence and the uniqueness of the definition domain of a rational morphism, see \cite[exposé~XVIII, définition~1.5.1]{SGA3II}.

For a scheme $S$, we denote by $\Sch/S$ the category of $S$-schemes endowed with its fppf topology.

Given a scheme $S$ and a quasi-coherent $\mathcal{O}_S$-algebra $\mathcal{A}$, we will write $\underline{\Spec}_S(\mathcal{A})\rightarrow S$ for the relative spectrum of $\mathcal{A}$ over $S$, see \cite[01LL]{stacks-project}.

\subsection{Acknowledgements}

I heartfully thank Michel Brion for suggesting this project to me during my PhD thesis defense.

\section{Classical toroidal embedding}\label{sectionclassicaltoroidalembedding}
In this section, we recall some basics of equivariant toroidal embeddings of reductive groups, which will be used in various places in this paper.

Let $G$ be a connected reductive group over an algebraically closed field $k$, let $G_{\ad}$ be the adjoint quotient of $G$, and let $\overline{G_{\ad}}$ be the wonderful compactification of $G_{\ad}$ in the sense of De~Concini and Procesi \cite{deconciniprocesi}. Let $T\subset G$ be a maximal split torus, and let $B$ and $B^-$ be two opposite Borel subgroups such that $B\bigcap B^-=T$, whose unipotent radicals are $U^+$ and $U^-$.

Let $X$ be a normal $k$-variety equipped with a $(G\times_k G)$-action such that $X$ equivariantly contains the symmetric variety $(G\times_k G)/\Diag(G)$ as an open dense subvariety. 

\begin{definition}
    (\cite[Definition~6.2.2]{BrionKumar}) $X$ is called \emph{a toroidal embedding} of $G$ if there exists a $k$-morphism $X\rightarrow \overline{G_{\ad}}$ satisfying the following commutative diagram
    $$\xymatrix{
        G \ar@{^{(}->}[r]\ar@{->>}[d] & X\ar[d] \\
        G_{\ad}\ar@{^{(}->}[r] & \overline{G_{\ad}}.
       }$$
\end{definition}

The following results shows that 
a toroidal embedding of $G$ behaves very similarly to the wonderful compactification, and the toroidal embeddings of $G$ are classified by fans supported in the negative Weyl chamber which is defined by the Borel subgroup $B$.

\begin{theorem}\label{toroidalembeddingtheorem}
    There is a bijection
    \begin{align*}
        \{\text{fans supported in the negative Weyl chamber}\}\;\;&\longleftrightarrow\;\; \{\text{toroidal embeddings of $G$}\}/\sim \\
        \sigma \;\;\;\;\;\;\;\;\;\;\;\;\;\;\;\;\;\;\;\;&\longmapsto \;\;\;\;\;\;\;\;\;\;\;\;\;\;\overline{G_{\sigma}}.
    \end{align*}
    The toroidal embedding $\overline{G_{\sigma}}$ contains an open subscheme $\overline{G_{\sigma}}_{,0}$ such that 
    \begin{itemize}
        \item $\overline{G_{\sigma}}_{,0}$ intersects properly with every $(G\times_k G)$-orbit of $\overline{G_{\sigma}}$;
        \item we have a $k$-isomorphism $$\overline{G_{\sigma}}_{,0}\cong U^-\times_k T_{\sigma}\times_k U^+,$$
        where the $T_{\sigma}$ is  the toric variety of $T$ defined by the fan $\sigma$, cf. \cite[Chapter~I]{toroidalembedding}.
    \end{itemize}
\end{theorem}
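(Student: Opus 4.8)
The plan is to realize $G$ as a spherical homogeneous space under $G\times_k G$ and to deduce the classification from the Luna--Vust theory of spherical embeddings, extracting the big-cell structure from the local structure theorem for spherical varieties.

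First I would verify that $G$ is spherical for the $(G\times_k G)$-action $(g_1,g_2)\cdot x=g_1 x g_2^{-1}$, whose stabilizer of $e$ is $\Diag(G)$. The subgroup $B^-\times B$ is a Borel subgroup of $G\times_k G$, and its orbit through $e$ is the product set $B^- B$, which is the open Bruhat cell; thus multiplication gives an isomorphism onto a dense orbit
\[
U^-\times_k T\times_k U^+\;\cong\;\Omega\subset G,
\]
so $G$ is spherical. The weight lattice of $(B^-\times B)$-semiinvariant rational functions on $G$ is canonically identified with $X^{\ast}(T)$, so $(G\times_k G)$-invariant valuations are parametrized by points of $X_{\ast}(T)_{\mathbb{R}}$. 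I would then compute this spherical datum explicitly: the valuation cone is exactly the negative Weyl chamber $\mathfrak{C}$, and the colors (the $B^-\times B$-stable prime divisors of $G$) are the closures of the codimension-one cells, indexed by the simple roots of $(G,B,T)$.

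Next I would apply the Luna--Vust classification: $(G\times_k G)$-equivariant normal embeddings of $G$ correspond bijectively to colored fans in $X_{\ast}(T)_{\mathbb{R}}$. Among these, the toroidal ones are precisely those whose colored fan uses no colors, equivalently those admitting a morphism to the wonderful compactification $\overline{G_{\ad}}$ compatible with the quotient $G\to G_{\ad}$, which is the definition in force here. A colorless colored fan is the same datum as an ordinary fan supported in the valuation cone $\mathfrak{C}$, so this yields the asserted bijection $\sigma\mapsto \overline{G_{\sigma}}$ between fans supported in the negative Weyl chamber and toroidal embeddings of $G$ up to equivariant isomorphism; both injectivity and surjectivity are part of the Luna--Vust formalism once the valuation cone and colors have been identified.

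For the structural statement I would invoke the local structure theorem. Replacing the torus factor of the big cell $\Omega$ by the toric variety $T_{\sigma}$ produces a $B^-\times B$-stable open subscheme
\[
\overline{G_{\sigma}}_{,0}\;\cong\;U^-\times_k T_{\sigma}\times_k U^+
\]
meeting the open orbit. Since the $(G\times_k G)$-orbits of $\overline{G_{\sigma}}$ are in inclusion-reversing bijection with the cones of $\sigma$, and the $T$-orbits of $T_{\sigma}$ realize the same combinatorics, the slice $\{e\}\times T_{\sigma}\times\{e\}$ meets every orbit, and $\overline{G_{\sigma}}_{,0}$ therefore intersects every $(G\times_k G)$-orbit in the minimal way dictated by the orbit--cone correspondence. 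The main obstacle is precisely the identification of the valuation cone with $\mathfrak{C}$ together with the local structure theorem: checking that the slice carries exactly the toric structure of $\sigma$ and that the product decomposition persists globally across all orbits, not merely on the open orbit, is where the real work lies, everything else being formal once the spherical data of $G$ is in hand. An alternative, more hands-on route, better suited to the integral generalization pursued later in this paper, is to construct $\overline{G_{\sigma}}$ directly by gluing the $(G\times_k G)$-translates of $\overline{G_{\sigma}}_{,0}$; there the obstacle becomes verifying the cocycle condition, namely that the birational $(G\times_k G)$-action on $U^-\times_k T_{\sigma}\times_k U^+$ restricts to isomorphisms on the overlaps, and this is exactly the point at which the hypothesis $\Supp\sigma\subset\mathfrak{C}$ is used.
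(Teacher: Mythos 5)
Your proposal is correct in outline, but it takes a genuinely different route from the paper. The paper does not reprove the classification at all: its proof of this theorem is a one-line citation of \cite[Propositions~6.2.3, 6.2.4]{BrionKumar}, whose internal argument starts from the definition of toroidal in force here (existence of a morphism $X\to\overline{G_{\ad}}$ lifting $G\to G_{\ad}$), pulls back the big cell of the wonderful compactification to get the local structure $X_0\cong U^-\times_k \overline{T}_0\times_k U^+$ meeting every orbit, and then reduces the bijection to the classification of toric varieties applied to the middle factor. You instead run the Luna--Vust theory of spherical embeddings: sphericity of $G$ under $G\times_k G$ via the Bruhat cell, identification of the weight lattice with $X^{\ast}(T)$, of the valuation cone with $\mathfrak{C}$, of the colors with the closures of the codimension-one $B^-\times B$-orbits, and then the dictionary ``toroidal $=$ colorless colored fan.'' Both routes are valid; yours is more conceptual and delivers injectivity and surjectivity of the correspondence for free, but two of your steps deserve to be made explicit rather than asserted: (i) since the paper works over an algebraically closed field of arbitrary characteristic, you need Knop's extension of Luna--Vust theory to positive characteristic, not the original characteristic-zero version; and (ii) the equivalence between ``no colors'' and the paper's actual definition (a morphism to $\overline{G_{\ad}}$) is a standard but non-tautological consequence of the functoriality criterion for morphisms of spherical embeddings, using that the colored fan of $\overline{G_{\ad}}$ consists of all faces of the valuation cone with empty color sets and that each color of $G$ has nonzero image in $X_{\ast}(T_{\ad})_{\mathbb{R}}$. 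Finally, your closing alternative---gluing $(G\times_k G)$-translates of $U^-\times_k T_{\sigma}\times_k U^+$ and checking the cocycle/birational-action conditions---is not really needed for this theorem, but it is essentially the strategy the paper itself adopts for the integral statement, via the rational action of \Cref{theoremrationalaction} and the quotient-sheaf construction of \Cref{construction}.
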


\begin{proof}
    The theorem follows from \cite[Proposition~6.2.3, 6.2.4]{BrionKumar}.
\end{proof}

\section{Rational action}\label{sectionrationalaction}
For the convenience of the reader, in this section, we recollect some general results about rational actions which are established in \cite{li2025wonderfulembeddinggroupschemes} and will serve as a main algebro-geometric tool to establish \Cref{intromaintheorem}. Roughly, the goal of this section is to extend a rational action of a group scheme $G$ on a scheme $Y$ to an actual action of $G$ on an algebraic space which is birational to the given scheme $Y$. The idea to achieve this goal essentially goes back to Artin's work on birational group law for schemes \cite[exposé~XVIII]{SGA3II}, which is initiated by Weil for algebraic varieties \cite{Weilbirationalgrouplaw}.

In this section, we consider a smooth group scheme $G$ over a scheme $S$, a scheme $Y$ which is flat and finitely presented over $S$ and an $S$-rational action of $G$ on $Y$, i.e., an $S$-rational morphism 
$$A: G\times_S Y\dashrightarrow Y$$
satisfying 
\begin{itemize}
    \item $e\times_S Y\subset \Dom(A)$ and $A(e,y)=y$ for any section $y$ of $Y$, where $e\in G(S)$ is the identity section;
    \item (Associativity) we have the following two $S$-rational morphisms which coincide:
    \begin{align*}
        G\times_S G\times_S Y &\longrightarrow Y \;\;\;\;\;\;\;\;\;\;\;\;\;\;\;\;\;\;\;\;\;\; G\times_S G\times_S Y \longrightarrow Y\\
        (g_1,g_2,y)&\longmapsto A (g_1g_2,y)  \;\;\;\;\;\;\;\;\;\;\;\;\;\;\;\; (g_1,g_2,y)\longmapsto A (g_1, A(g_2,y)).
    \end{align*}    
\end{itemize}

For a test $S$-scheme $S'$, if we have a section $y\in Y(S')$, we will denote by $\Dom(A)_y\subset G_{S'}$ the base change of $\Dom(A)$, viewed as a $Y$-scheme via the projection, along $y$.

\begin{definition}\label{equivalencerelation}
    For a test $S$-scheme $S'$ and two sections 
    $$(g_1,y_1),(g_2,y_2)\in (G\times_S Y)(S'),$$
    we say that $(g_1,y_1)$ and $(g_2,y_2)$ are equivalent if there exist an fppf cover $S''\rightarrow S'$ and a section $a\in G(S'')$ such that $A(ag_1,y_1)$ and $A(ag_2,y_2)$ are both well-defined (meaning $(ag_1,y_1),(ag_2,y_2)\in \Dom(A_{S''})(S'')$) and are equal. If so, we will write $$(g_1,y_1)\sim_{A} (g_2,y_2).$$
\end{definition}

One useful feature of \Cref{equivalencerelation} is given in the following lemma which roughly says that two equivalent sections can be tested by any section that can bring them into the definition domain of the corresponding rational action.

\begin{lemma}\label{testinglemma}
    (\cite[Lemma~5.2]{li2025wonderfulembeddinggroupschemes}) We consider two sections $(g_1, y_1),(g_2,y_2)\in (G\times_S Y)(S)$ and $a\in G(S')$ where $S'\rightarrow S$ is an fppf-covering such that $A(ag_1,y_1)=A(ag_2,y_2)$. Then, for any $a'\in G(S'')$, where $S''$ is an $S$-scheme, if $A(a'g_1,y_1)$ and $A(a'g_2,y_2)$ are both well-defined, then, they are equal.
\end{lemma}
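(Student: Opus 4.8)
The plan is to reduce everything to a single base over which both $a$ and $a'$ are defined, and then to exploit the associativity of $A$ by introducing a generic translating factor. First I would pass to the fppf cover $\tilde{S} := S'\times_S S''$ of $S''$. Since $Y$ is separated over $S$ (as is implicit in the use of $\Dom$), sections of $Y$ satisfy fppf descent, so it suffices to prove $A(a'g_1,y_1)=A(a'g_2,y_2)$ after base change to $\tilde{S}$. Over $\tilde{S}$ both $a$ and $a'$ are simultaneously available; write
\[
z := A(ag_1,y_1)=A(ag_2,y_2)\in Y(\tilde{S}),\qquad b := a'a^{-1}\in G(\tilde{S}),
\]
and $w_i := A(a'g_i,y_i)\in Y(\tilde{S})$, which are well-defined by hypothesis. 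The goal becomes the single equality $w_1=w_2$.

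The naive move is to apply associativity to rewrite $w_i=A(b\cdot ag_i,y_i)=A(b,A(ag_i,y_i))=A(b,z)$, which is manifestly independent of $i$; the difficulty is that associativity is only an equality of \emph{rational} morphisms, valid on a dense open locus of parameters, and the specific triple $(b,ag_i,y_i)$ need not lie there. To circumvent this I would introduce a generic translating element $c\in G$, made precise as the tautological section of $G$ over $G_{\tilde{S}}$, so that ``a dense open locus of $c$'' means a dense open subscheme of $G_{\tilde{S}}$. On a dense open locus of $c$, associativity applies to the now-generic parameters and gives, on one hand,
\[
A(ca'g_i,y_i)=A\bigl((cb)(ag_i),y_i\bigr)=A\bigl(cb,A(ag_i,y_i)\bigr)=A(cb,z),
\]
which does not depend on $i$, and on the other hand
\[
A(ca'g_i,y_i)=A\bigl(c,A(a'g_i,y_i)\bigr)=A(c,w_i).
\]
Intersecting the finitely many dense open loci involved (an intersection of dense opens being dense), I obtain a dense open locus of $c\in G_{\tilde{S}}$ on which $A(c,w_1)=A(c,w_2)$.

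Finally I would view $c\mapsto A(c,w_i)$ as two rational morphisms $G_{\tilde{S}}\dashrightarrow Y$, with common definition domain $\Dom(A)_{w_1}\cap\Dom(A)_{w_2}$. This common domain contains the identity section, because $e\times_S Y\subset\Dom(A)$, and there $A(e,w_i)=w_i$. Since $Y$ is separated over $S$, the locus on which the two morphisms agree is closed in this common domain; by the previous paragraph it contains a dense open, and — using that $G$ is smooth, hence flat, over $S$, so that this dense open is schematically dense — it must be the entire common domain. Evaluating at $e$ then yields $w_1=A(e,w_1)=A(e,w_2)=w_2$, which is the desired conclusion.

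The main obstacle is precisely this passage from a generic equality of rational morphisms to an equality of honest sections at the non-generic point $c=e$. Associativity cannot be evaluated at an arbitrary section, so the role of the auxiliary generic factor $c$ is to confine every use of associativity to loci where it literally holds, while the hypotheses $e\times_S Y\subset\Dom(A)$ and $A(e,-)=\mathrm{id}$ are exactly what allow the resulting equality to be transported back to the identity. Upgrading ``dense open'' to ``schematically dense'' — so that the closed-equalizer argument forces equality on the whole common domain — is where the smoothness and flatness of $G$ over $S$ enter, and is the step that demands the most care.
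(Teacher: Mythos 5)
The paper does not actually prove this lemma---it quotes it from \cite[Lemma~5.2]{li2025wonderfulembeddinggroupschemes}---so your proposal has to be judged on its own merits. Its skeleton is the natural (and almost certainly the intended) one: pass to $\tilde S=S'\times_S S''$, introduce the tautological section $c$ of $G$ over $G_{\tilde S}$, compute $A(ca'g_i,y_i)$ in two ways to obtain $A(c,w_1)=A(c,w_2)$ for ``generic'' $c$, and then propagate this equality to $c=e$ by an equalizer argument using separatedness of $Y$ and schematic density. However, the central step---``on a dense open locus of $c$, associativity applies to the now-generic parameters''---contains a genuine gap, and it is not the step you flag as the delicate one at the end.

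The associativity you are given is only an identity of $S$-rational morphisms: it holds on some $S$-dense open $W\subset G\times_S G\times_S Y$. Your triples $(cb,\,ag_i,\,y_i)$ and $(c,\,a'g_i,\,y_i)$ are generic in the \emph{first} coordinate only; the other two coordinates are the fixed, possibly very special, sections you started with. The locus of $c$ you need is the preimage of $W$ under $c\mapsto (cb,ag_i,y_i)$, i.e.\ the trace of $W$ on the slice $G\times\{ag_i\}\times\{y_i\}$, and an $S$-dense open can miss such a slice entirely: take for instance $W=G\times_S V$ with $V\subset G\times_S Y$ an $S$-dense open avoiding the (nowhere dense) image of the section $(ag_i,y_i)$. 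So making $c$ generic does not, by itself, confine your uses of associativity to the locus where the rational identity holds, and all the displayed equalities are unjustified as written. What is needed first is a pointwise strengthening of associativity: $A(h_1h_2,y)=A(h_1,A(h_2,y))$ whenever $(h_2,y)$, $(h_1h_2,y)$ and $(h_1,A(h_2,y))$ all lie in $\Dom(A)$. This strengthening is a lemma in its own right, proved by exactly the separatedness-plus-schematic-density argument you reserve for the last step, in the style of \cite[exposé~XVIII]{SGA3II}. Once it is available, your loci of $c$ become explicit opens---translates of $\Dom(A)_{w_i}$, of $\Dom(A)_{y_i}(a'g_i)^{-1}$, and of $\Dom(A)_{z}$---each containing a section of $G_{\tilde S}\to\tilde S$ (either $e$ or $aa'^{-1}$). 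Note also that what your final equalizer step needs is not topological density but fiberwise ($S$-)density, since that is what flatness upgrades to schematic density; and fiberwise density of the above loci follows from their containing a section only because the fibers of $G$ are connected, hence irreducible---an assumption (satisfied here, where $G$ is reductive) that your argument uses silently.
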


\begin{lemma}\label{lemmaequivalence}
    (\cite[Lemma~5.3]{li2025wonderfulembeddinggroupschemes}) The relation in \Cref{equivalencerelation} is an equivalence relation.
\end{lemma}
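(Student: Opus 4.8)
The plan is to verify the three defining properties of an equivalence relation, with reflexivity and symmetry being immediate and transitivity carrying all the weight. For reflexivity, given $(g_1,y_1)\in(G\times_S Y)(S')$ I would take the trivial cover $S''=S'$ together with the section $a=g_1^{-1}\in G(S')$; then $A(ag_1,y_1)=A(e,y_1)=y_1$ is well-defined by the very axioms of a rational action, and it is trivially equal to itself, so $(g_1,y_1)\sim_A(g_1,y_1)$. Symmetry is immediate, since the condition in \Cref{equivalencerelation} --- the existence of a cover $S''\to S'$ and $a\in G(S'')$ with $A(ag_1,y_1)=A(ag_2,y_2)$ both well-defined --- is manifestly symmetric in the two sections.

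For transitivity, suppose $(g_1,y_1)\sim_A(g_2,y_2)$ is witnessed by a cover $S_1\to S'$ and $a\in G(S_1)$, and $(g_2,y_2)\sim_A(g_3,y_3)$ by a cover $S_2\to S'$ and $b\in G(S_2)$. I would first pass to the common fppf cover $S_{12}=S_1\times_{S'}S_2$ and pull back all the data. The heart of the argument is to produce a single test element bringing all three sections simultaneously into the domain of $A$: concretely, an fppf cover $\widetilde{S}\to S_{12}$ and a section $c\in G(\widetilde{S})$ such that $A(cg_1,y_1)$, $A(cg_2,y_2)$ and $A(cg_3,y_3)$ are all well-defined. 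Granting such a $c$, \Cref{testinglemma} applied to the first equivalence (with witness $a$ and test element $c$) forces $A(cg_1,y_1)=A(cg_2,y_2)$, and applied to the second forces $A(cg_2,y_2)=A(cg_3,y_3)$; chaining these equalities yields $A(cg_1,y_1)=A(cg_3,y_3)$, which exhibits $(g_1,y_1)\sim_A(g_3,y_3)$ over $\widetilde{S}\to S'$.

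The construction of $c$ is where the geometric hypotheses enter, and it is the step I expect to be the main obstacle. For each $i$ I would consider the open subscheme $W_i\subset G_{S_{12}}$ cut out as the locus of $c$ for which $(cg_i,y_i)$ lands in $\Dom(A)$; formally $W_i$ is the preimage of $\Dom(A)$ under the locally closed immersion $c\mapsto(cg_i,y_i)$, hence open. Because right translation by $g_i$ is a fiberwise-density-preserving automorphism of $G_{S_{12}}$, the locus $W_i$ is fiberwise dense if and only if the slice $\Dom(A)_{y_i}=\{h:(h,y_i)\in\Dom(A)\}\subset G_{S_{12}}$ is; and the latter is open and, since $e\times_S Y\subseteq\Dom(A)$, contains the identity section, so that --- as the fibers of the smooth group scheme $G$ are irreducible --- a nonempty open is dense in each fiber and $\Dom(A)_{y_i}$ is fiberwise dense. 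The finite intersection $W:=W_1\cap W_2\cap W_3$ is then fiberwise dense, hence surjects onto $S_{12}$; being open in the smooth $S$-scheme $G_{S_{12}}$, the map $W\to S_{12}$ is smooth and surjective, therefore an fppf cover. Taking $\widetilde{S}=W$ with its tautological section $c\in G(W)$ supplies the required element and completes the proof. The delicate point throughout is precisely this fiberwise density of $\Dom(A)_{y_i}$, which is what guarantees that $W$ stays faithfully flat over the base rather than collapsing over some fibers, and hence that a genuine fppf cover carrying the common test element exists.
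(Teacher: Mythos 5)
Your overall architecture is almost certainly the intended one (the paper gives no inline argument, deferring to \cite[Lemma~5.3]{li2025wonderfulembeddinggroupschemes}): reflexivity via $a=g_1^{-1}$ and symmetry are immediate, and transitivity is obtained by manufacturing one test section $c$ that brings all three pairs into the domain of $A$ and then applying \Cref{testinglemma} twice. Your reductions are also correct as stated: $W_i$ is open, it is a right translate of the slice $\Dom(A)_{y_i}$, and an open subscheme of $G_{S_{12}}$ is an fppf cover of $S_{12}$ as soon as it is fiberwise nonempty. You have also correctly identified where the real content lies, namely the fiberwise density of the slices.

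The genuine gap is the clause ``as the fibers of the smooth group scheme $G$ are irreducible.'' Fibers of smooth group schemes need not be irreducible, or even connected (constant finite groups, $\mu_n$ over $\mathbb{Z}[1/n]$, orthogonal groups), and \Cref{sectionrationalaction} assumes only that $G$ is a smooth group scheme over $S$. With disconnected fibers your argument breaks exactly here: $\Dom(A)_{y_i}$ is open and contains the identity section but may lie entirely in the identity component, so the translates $W_i=\Dom(A)_{y_i}\cdot g_i^{-1}$ can sit in pairwise disjoint components and $W_1\cap W_2\cap W_3$ can be fiberwise empty. Nor is this a pedantic point that some other argument would circumvent: take $G=(\mathbb{Z}/2)_k$ acting on $Y=\mathbb{P}^2_k$ with the nontrivial element $\tau$ acting through the Cremona involution $[x:y:z]\mapsto[yz:xz:xy]$. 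This is a legitimate $S$-rational action in the sense of \Cref{sectionrationalaction} (it satisfies $e\times_S Y\subset\Dom(A)$ and associativity as rational maps), yet for two distinct points $y_1\neq y_3$ on the contracted line $z=0$ one has $A(\tau,y_1)=A(\tau,y_3)=[0:0:1]$ while $A(e,y_1)=y_1\neq y_3=A(e,y_3)$, so \Cref{testinglemma} itself fails in this generality. The conclusion is that connectedness of the fibers of $G$ is a genuinely necessary standing hypothesis (surely imposed in the cited reference, and satisfied by the Chevalley groups to which the paper applies the framework); once fibers are connected, smoothness does give irreducibility and your density argument, hence the whole proof, goes through. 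You should state that hypothesis explicitly and deduce irreducibility from smooth plus connected, rather than from smoothness alone.
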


\begin{definition}\label{definitionwonderfulembedding}
    We define the fppf quotient sheaf over the category $\Sch/S$
    $$\overline{Y}\coloneq(G\times_S Y)/\sim_A$$
    with respect to the equivalence relation defined in \Cref{equivalencerelation}.

    By \cite[Definition-Proposition~5.10]{li2025wonderfulembeddinggroupschemes}, we can define an action of $G$ on $\overline{Y}$: for a test $S$-scheme $S'$, a section $g\in G(S')$ and a section $\overline{y}\in\overline{Y}(S')$ represented by a section $(f, x)\in (G\times_S Y)(S'')$ where $S''\rightarrow S'$ is an fppf-cover, we define $g\overline{y}\in \overline{Y}(S')$ to be the section represented by $(gf, x)\in (G\times_S Y)(S'')$.
\end{definition}

\begin{remark}\label{remarkfuncotriality}
    \Cref{definitionwonderfulembedding} has a natural functoriality in the sense that, given an (not necessarily flat) $S$-scheme $S'$, we have the isomorphism
    $$\overline{Y}_{S'}\cong (G_{S'}\times_{S'} Y_{S'})/\sim_{A_{S'}}.$$
\end{remark}

\begin{theorem}\label{algebraicspace}
    (\cite[Corollary~5.9, Proposition~5.11]{li2025wonderfulembeddinggroupschemes}) The fppf quotient sheaf $\overline{Y}$ is an $S$-algebraic space containing $Y$ as an open subscheme via the morphism $j: Y\rightarrow \overline{Y}$ which sends a section $y\in Y(S')$ to the equivalence class represented by $(e, y)\in Y(S')$, where $S'$ is a test $S$-scheme.
\end{theorem}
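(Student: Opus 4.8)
The plan is to realize $\overline{Y}$ as the quotient of the scheme $G\times_S Y$ by the equivalence relation $\sim_A$ and then invoke the standard criterion that the quotient of a scheme by a representable, flat, finitely presented equivalence relation is an algebraic space; this is exactly the sheaf-theoretic incarnation of Artin's method for birational group laws \cite[exposé~XVIII]{SGA3II}. By construction the quotient map $\pi\colon G\times_S Y\to\overline{Y}$ is an epimorphism of fppf sheaves, and the fiber product
$$R\coloneq(G\times_S Y)\times_{\overline{Y}}(G\times_S Y)$$
represents $\sim_A$. Granting that $R$ is representable by a scheme with both projections $R\rightrightarrows G\times_S Y$ smooth and surjective, the cited criterion immediately gives that $\overline{Y}$ is an $S$-algebraic space.

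To represent $R$, the key tool is \Cref{testinglemma}: equivalence can be detected by \emph{any single} element $a\in G$ carrying both points into $\Dom(A)$. I would introduce the auxiliary locally closed subscheme
$$W\subset(G\times_S Y)\times_S(G\times_S Y)\times_S G$$
consisting of tuples $(g_1,y_1,g_2,y_2,a)$ for which $(ag_1,y_1)$ and $(ag_2,y_2)$ both lie in $\Dom(A)$ (an open condition, since definition domains are open) and satisfy $A(ag_1,y_1)=A(ag_2,y_2)$ (a closed condition on that open locus, using that $Y$ is separated over $S$). The existence clause in \Cref{equivalencerelation} shows the projection $W\to(G\times_S Y)\times_S(G\times_S Y)$ has image $R$, while \Cref{testinglemma} shows the fiber of $W\to R$ over a point is the open subscheme of admissible $a\in G$, which is nonempty and smooth because $G$ is smooth. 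Descending along this fppf cover, or identifying $R$ directly as an open subscheme of an explicit scheme built from $\Dom(A)$, yields representability of $R$; the projections are then smooth (being fppf-locally projections off a smooth factor) and surjective (they contain the diagonal).

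For the open immersion $j\colon Y\to\overline{Y}$, I would first note that associativity of the rational action gives, for $(g,y)\in\Dom(A)$, the equivalence $(g,y)\sim_A(e,A(g,y))$, so that $\pi|_{\Dom(A)}=j\circ A$. To prove $j$ is representable by open immersions, take any $T\to\overline{Y}$; after an fppf base change it lifts to $(g,y)\in(G\times_S Y)(T')$, and $j(Y)\times_{\overline{Y}}T'$ is the locus where $(g,y)$ is equivalent to a section of $\{e\}\times Y$. Fixing a test element via \Cref{testinglemma}, this locus is cut out by the open condition that the fixed element brings $(g,y)$ into $\Dom(A)$ compatibly with landing in $\{e\}\times Y$; openness of $\Dom(A)$ makes it an open subscheme of $T'$, and fppf descent transfers openness down to $T$. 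Finally, since $g\cdot[(e,y)]=[(g,y)]$, every section of $\overline{Y}$ is fppf-locally a $G$-translate of a section of $j(Y)$, so $\overline{Y}=G\cdot j(Y)$ and the translates $g\cdot j(Y)$ form an open cover; pulling back along $\pi$ re-exhibits $G\times_S Y\to\overline{Y}$ as a smooth surjection from a scheme, completing the verification.

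The hard part will be Step two: proving that $R$ is genuinely representable, not merely an fppf quotient, together with the smoothness of its projections. The subtlety is that the element $a$ in \Cref{equivalencerelation} exists only fppf-locally, so one must either descend effectively or produce $R$ as an honest open subscheme of a scheme assembled from $\Dom(A)$, while simultaneously using separatedness of $Y$ for the closedness of the equality condition and smoothness of $G$ for the smoothness of the fibers of $W\to R$. Controlling this interplay is the technical heart of the argument; everything else is formal manipulation of the quotient sheaf and the openness of definition domains.
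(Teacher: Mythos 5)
Your overall strategy---quotient $G\times_S Y$ by a flat, finitely presented (pre-)relation and invoke Artin's theorem---is the same as the paper's, but the implementation diverges at exactly the point you yourself flag as the ``technical heart,'' and there the proposal has a genuine gap. You propose to represent the full equivalence relation $R=(G\times_S Y)\times_{\overline{Y}}(G\times_S Y)$ by building the locally closed scheme $W$ of tuples $(g_1,y_1,g_2,y_2,a)$ with $A(ag_1,y_1)=A(ag_2,y_2)$ and then ``descending along the fppf cover $W\to R$.'' This does not work as stated: fppf descent is not effective for schemes, so an fppf surjection from a scheme onto the sheaf $R$ does not make $R$ a scheme; and upgrading the conclusion to ``$R$ is an algebraic space'' via bootstrap would require knowing that $W\to R$ is representable (or controlling the induced relation on $W$), which is essentially the original problem again. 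Nor is the fallback of ``identifying $R$ directly as an open subscheme of an explicit scheme built from $\Dom(A)$'' available: $R$ is the \emph{image} sheaf of $W$ in $(G\times_S Y)\times_S(G\times_S Y)$, and it is not cut out of the product by the domain and equality conditions alone, because the witness $a$ of \Cref{equivalencerelation} only exists after an unspecified fppf cover; images of scheme morphisms need not be representable. There is also a circularity in your claim that the projections of $R$ are smooth: $\pr_1\colon R\to G\times_S Y$ is by definition the base change of $\pi\colon G\times_S Y\to\overline{Y}$ along $\pi$ itself, so its flatness presupposes the representability and flatness of $\pi$, which is part of what the theorem is establishing.

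The paper (following the cited reference) sidesteps all of this with a graph trick: instead of the full relation $R$, it quotients by the pre-relation $\Gamma\rightrightarrows G\times_S Y$ of \Cref{fppfrelation}, where $\Gamma$ is the graph of the rational map $\phi(g_1,g_2,y)=A(g_1^{-1},A(g_2,y))$ restricted to $\Dom(\phi)$. This $\Gamma$ is a scheme for free (it is isomorphic to the open subscheme $\Dom(\phi)\subset G\times_S G\times_S Y$), and the projection $\pr_{23}$ is visibly smooth, surjective and of finite presentation, since it is the restriction to an open subscheme of a projection with smooth fibers $G$. Artin's theorem, as cited in the paper, applies to such fppf (pre-)relations, not only to the literal fiber-product relation $R$; the real work then shifts to proving that the quotient sheaf by $\Gamma$ coincides with $\overline{Y}$, i.e.\ that the relation generated by $\Gamma$ is exactly $\sim_A$ --- this is precisely \Cref{fppfrelation}, proved via \Cref{testinglemma}. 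Your treatment of the open immersion $j$ is in the right spirit, but since it leans on the same unproven representability input, the proposal as written does not contain a correct proof of the heart of the theorem.
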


To show that $\overline{Y}$ is an algebraic space in \Cref{algebraicspace}, thanks to Artin's result (see \cite[corollaire~(10.4)]{Champsalgebrique} or \cite[04S6~(2)]{stacks-project} or \cite[théorème~3.1.1]{anantharaman}) which says that the quotient of an algebraic space with respect to an fppf-relation is an algebraic space, we are reduced to proving the following result.

\begin{theorem}\label{fppfrelation}
 (\cite[Theorem~5.6]{li2025wonderfulembeddinggroupschemes}) We consider an $S$-rational morphism
\begin{align}\label{definitionofphi}
    \phi: G\times_S G\times_S Y\;\;&\dashrightarrow \;\;\;\;\;\;\;\;\;\;\;Y\\
         (g_1, g_2, y)\;\;\;\;\;\;&\longmapsto A(g_1^{-1}, A(g_2,y)),\nonumber
\end{align}
and let $\Gamma\subset G\times_S G\times_S Y\times_S Y$ be the graph of the $S$-morphism $\phi\vert_{\Dom(\phi)}$. The $\Gamma$ is naturally endowed with two projections
\begin{equation}\label{relationequation}
    \Gamma \xrightrightarrows[\pr_{23}]{\pr_{14}} G\times_S Y.
\end{equation}
   Then, the fppf quotient sheaf of $G\times_S Y$ with respect to \Cref{relationequation} is isomorphic to $\overline{Y}$ as fppf-sheaves over $\Sch/S$.
\end{theorem}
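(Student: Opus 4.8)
The plan is to exhibit a mutually inverse pair of morphisms of fppf sheaves between $\overline{Y}$ and the quotient $Q\coloneq (G\times_S Y)/\Gamma$, where both are quotients of the \emph{same} object $G\times_S Y$ by the equivalence relation generated by their respective data. First I would observe that the two projections $\pr_{14}$ and $\pr_{23}$ in \Cref{relationequation} record exactly the two sections $(g_1,y)$ and $(g_2, A(g_2,y)\text{-preimage data})$ whose images under $A(g_i^{-1},-)$ agree; more precisely, a point of $\Gamma$ over a test scheme $S'$ is a tuple $(g_1,g_2,y,\phi(g_1,g_2,y))$ lying in $\Dom(\phi)$, and the two legs send it to $(g_1,y)$ and $(g_2,y)$. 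Unwinding the definition of $\phi$, the condition that $(g_1,g_2,y)\in\Dom(\phi)$ with $\phi(g_1,g_2,y)$ well defined says precisely that $A(g_1^{-1}, A(g_2,y))$ makes sense, which is the defining relation of \Cref{equivalencerelation} read with $a=g_1^{-1}$, $g_2=g_2$, $y_1=y_2=y$. Thus the relation cut out by $\Gamma$ is, set-theoretically on sections, a sub-relation of $\sim_A$.

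The key technical point is to show the two relations generate the \emph{same} fppf equivalence relation, so that the universal properties of the two quotient sheaves produce inverse maps. I would proceed in both directions. For the map $Q\to\overline{Y}$: since $\Gamma$ relates $(g_1,y)$ and $(g_2,y)$ only when they are already $\sim_A$-equivalent (take the witnessing section to be $g_1^{-1}$, and invoke \Cref{testinglemma} to see the equality is independent of the choice), the composite $G\times_S Y\to\overline{Y}$ coequalizes the pair \Cref{relationequation}, hence factors through $Q$. For the reverse map $\overline{Y}\to Q$: I must check that $G\times_S Y\to Q$ coequalizes the relation $\sim_A$. Here \Cref{testinglemma} is essential — given $(g_1,y_1)\sim_A (g_2,y_2)$ witnessed by some $a\in G(S'')$ over an fppf cover, I want to connect $(g_1,y_1)$ and $(g_2,y_2)$ through finitely many applications of $\Gamma$ (after fppf refinement). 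The natural bridge is to route through the common value $z\coloneq A(ag_1,y_1)=A(ag_2,y_2)$: the point $(a, ?, ?)$ of $\Gamma$ links $(g_1,y_1)$ to a section built from $z$, and symmetrically for $(g_2,y_2)$, so that both map to the same class in $Q$. Throughout I would use \Cref{remarkfuncotriality}-style functoriality and the fact that fppf-locally everything is represented by honest sections of $G\times_S Y$.

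The main obstacle I anticipate is the bookkeeping around definition domains and fppf covers: the relation $\sim_A$ is defined only after passing to a cover and only where the relevant instances of $A$ are well defined, whereas $\Gamma$ is literally the graph of $\phi|_{\Dom(\phi)}$, so matching them requires carefully tracking which sections land in which definition domain. In particular, to realize a given $\sim_A$-equivalence by the $\Gamma$-relation I may need to enlarge the fppf cover so that the intermediate sections (e.g. the one representing $z$ as a pair $(f,x)\in (G\times_S Y)(S'')$) are simultaneously in the domains of the two copies of $\phi$ I want to apply; smoothness of $G$ over $S$ and flatness of $Y$ guarantee the definition domains are dense and fppf-covers can be chosen to meet them, which is what makes this possible. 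Once the relations are matched, the conclusion is formal: two fppf quotient sheaves of a common object by a common generated equivalence relation are canonically isomorphic, and one checks the isomorphism is compatible with the $G$-actions of \Cref{definitionwonderfulembedding}, completing the identification $Q\cong\overline{Y}$.
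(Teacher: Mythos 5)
There is a genuine error at the very first step of your argument, and it propagates through everything that follows: you misidentify the two projections in \Cref{relationequation}. A point of $\Gamma$ over a test scheme is $(g_1,g_2,y,z)$ with $z=\phi(g_1,g_2,y)=A(g_1^{-1},A(g_2,y))$, so $\pr_{14}$ sends it to $(g_1,z)$ --- first and \emph{fourth} coordinates --- while $\pr_{23}$ sends it to $(g_2,y)$. The relation therefore identifies $(g_2,y)$ with $\bigl(g_1,A(g_1^{-1},A(g_2,y))\bigr)$, i.e.\ it implements the change of representative ``$g_2\cdot y=g_1\cdot(g_1^{-1}g_2\cdot y)$''. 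You instead read the two legs as $(g_1,y)$ and $(g_2,y)$, a relation that never changes the $Y$-coordinate. That relation is \emph{not} a sub-relation of $\sim_A$, contrary to your claim: take $Y=G$ with $A$ the (everywhere-defined) left-translation action; then $(g_1,y)\sim_A(g_2,y)$ forces $ag_1y=ag_2y$, i.e.\ $g_1=g_2$, whereas the domain condition on $(g_1,g_2,y)$ holds for all sections. Your explicit witness is also false under your reading: with $a=g_1^{-1}$ one gets $A(g_1^{-1}g_1,y)=y$ on one side and $A(g_1^{-1}g_2,y)$ on the other, which are not equal in general. Consequently neither direction of your argument goes through as written: the composite $G\times_S Y\to\overline{Y}$ does \emph{not} coequalize the pair you describe (in the translation example, $(g_1,y)$ and $(g_2,y)$ have distinct images $g_1y\neq g_2y$ in $\overline{Y}$), and the ``bridge through $z$'' in the reverse direction is unreachable by a relation that fixes the $Y$-coordinate.

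The good news is that your two-step plan (mutual containment of the generated fppf equivalence relations, then the universal property) is the right one, and your bridge idea works once the projections are read correctly. By associativity, $\phi(h_1,h_2,y)=A(h_1^{-1}h_2,y)$ as $S$-rational morphisms, so for a witness $a$ with $z=A(ag_1,y_1)=A(ag_2,y_2)$ the points $(a^{-1},g_i,y_i,z)$ lie in $\Gamma$ (after the fppf refinement on which $a$ lives), whence both $(g_1,y_1)$ and $(g_2,y_2)$ are $\Gamma$-related to $(a^{-1},z)$; this gives the containment of $\sim_A$ in the equivalence relation generated by $\Gamma$. For the opposite containment, the witness for a point of $\Gamma$ is not $g_1^{-1}$ (even under the correct reading, $A(g_1^{-1}g_2,y)$ need not be well defined) but a \emph{generic} section: the two rational maps $a\mapsto A(ag_1,z)$ and $a\mapsto A(ag_2,y)$ on $G_{S'}$ coincide as rational maps by associativity, their common definition domain is an $S'$-dense open subscheme, hence (as $G$ is smooth and finitely presented) an fppf cover of $S'$, and restricting to it produces the required $a$; \Cref{testinglemma} then shows the choice is immaterial. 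Note finally that the paper itself gives no proof of \Cref{fppfrelation} --- it quotes Theorem~5.6 of the author's companion paper --- so the comparison here is against that intended argument; as submitted, however, your proof rests on a wrong identification of the relation and does not establish the theorem.
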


\section{Toroidal embedding defined by a single cone}\label{sectionsinglecone}

The goal of \Cref{sectionsinglecone} is to establish \Cref{intromaintheorem} when the fan $\Sigma$ is a strongly convex rational polyhedral cone. In this case, we will first show a theorem about a rational action in \Cref{subsectionrationalaction}, which will allow us to apply the general framework in \Cref{sectionrationalaction} to construct the desired $\mathbb{Z}$-scheme $X_{\Sigma}$ in \Cref{intromaintheorem}.

\subsection{Setup}\label{subsectionsetup}
Although the most interesting base scheme is $\Spec(\mathbb{Z})$, in the sequel we will work with arbitrary base scheme in order to get the flexibility of making base change.

\subsubsection{Group-theoretic setup}

Let $G$ be a Chevalley group, i.e., a split reductive group scheme over a scheme $S$.
Let $T\subset G$ be a maximal split torus as in \cite[exposé~XXII, définition~1.13]{SGA3III} which is contained in a Borel subgroup $B\subset G$. Let $B^-$ be the opposite Borel such that $B\bigcap B^-=T$. Let $U^+$ and $U^-$ be the unipotent radicals of $B$ and $B^-$ respectively. By \cite[exposé~XXII, proposition~4.1.2]{SGA3III}, we have an open immersion 
$$\Omega_G\coloneq U^-\times_S T\times_S U^+\longhookrightarrow G$$
given by the group multiplication.
Let $Z\subset G$ be the center, and let 
$$T_{\ad}\coloneq T/Z,\;B_{\ad}\coloneq B/Z= T_{\ad}\ltimes U^+,\; G_{\ad}\coloneq G/Z.$$
We denote the set of the roots with respect to $T$ by
$$\Psi\coloneq \Psi(G,T).$$
The choice of $B$ gives a set of simple roots, a set of positive roots and a set of negative roots:
$$\Delta\coloneq\{\alpha_1,...,\alpha_l\}\subset \Psi^+\subset \Psi,\;\;\Psi^-\subset \Psi.$$
Then we have the canonical decomposition of $\mathfrak{g}$ into root spaces: $\mathfrak{g}=\bigoplus_{\alpha\in\Psi}\mathfrak{g}_{\alpha}$.
Let $\mathfrak{C}\subset X_\ast(T)_{\mathbb{R}}$ be \emph{the negative Weyl chamber}.

We denote the Lie algebras of $T\subset B\subset G$ by $\mathfrak{t}\subset \mathfrak{b}\subset \mathfrak{g}$.
According to \cite[exposé~XXIII, proposition~6.2]{SGA3III}, we can fix a Chevalley system $(X_\alpha\in \Gamma(S,\mathfrak{g}_{\alpha})^{\times})_{\alpha\in\Phi}$ for $G$. Each $X_\alpha$ gives rise to an isomorphism of $S$-groups 
\begin{equation}\label{pinning}
    p_\alpha\colon\mathbb{G}_{a,S}\longrightarrow U_\alpha,\;\;\alpha\in\Phi,
\end{equation}
where $U_{\alpha}\subset G$ is the root subgroup corresponding to the root $\alpha$.

For each $\alpha_i\in \Delta$, let 
$$n_{\alpha_i}\coloneq p_{\alpha_i}(1)p_{-{\alpha_i}}(-1)p_{\alpha_i}(1)\in \Norm_G(T)(S),$$
which is a representative of the simple reflection $s_i$ defined by $\alpha_i$ in the Weyl group $W$ \cite[exposé~XXII, 3.3]{SGA3III}.

\subsubsection{Toric embedding}
Let $\sigma\subset \mathfrak{C}$ be a strongly convex rational polyhedral cone. Let 
$$\sigma^\vee\coloneq \{x\in X^\ast (T)_{\mathbb{R}}\vert\; \langle x,y\rangle\geq 0,\; \text{for any}\; y\in\sigma \}\subset X^\ast (T)_{\mathbb{R}}$$
be the dual cone of $\sigma$. By, for instance, \cite[Proposition~1.3]{Odatoricvarieties}, $\sigma^\vee$ is a finitely generated as a semigroup in the sense that there exist $x_1,...,x_p\in X^\ast (T)$ such that $\sigma^\vee= \mathbb{R}_{\geq 0}x_1+...+\mathbb{R}_{\geq 0}x_p$. By the classical theory of toric varieties, we have the canonical open immersion
$$i:T\longhookrightarrow\overline{T_{\sigma}}\coloneq \underline{\Spec}_S(\mathcal{O}_S[\sigma^\vee\bigcap X^\ast (T)]).$$
Moreover, we introduce the open immersion
\begin{align}\label{definitionofnu}
    \nu: \Omega_G= U^-\times_S T\times_S U^+ \xrightarrow{\Id\times i\times \Id}\overline{\Omega_{\sigma}}\coloneq U^-\times_{S}\overline{T_{\sigma}}\times_{S}U^+,
\end{align}
which will be used to view $\Omega_G$ as an open subscheme of $\overline{\Omega_{\sigma}}$ in the following.

Since, by \cite[proposition~1.2.7]{EGA2}, $\Hom_{\Sch/S}(\overline{T_{\sigma}},\mathbb{A}_{1,S})=\Hom(\mathcal{O}_S[t],\mathcal{O}_S[\sigma^\vee\bigcap X^\ast (T)])$, we have the natural morphism 
$$e(-\alpha_i):\overline{T_{\sigma}}\rightarrow \mathbb{A}_{1,S}$$ 
defined by $-\alpha_i\in -\Delta$ which fits into the commutative diagram
\begin{equation}\label{extensiontorus}
    \xymatrix{
T \ar@{^{(}->}[r]\ar[d]^{-\alpha_i}  & \overline{T_{\sigma}} \ar[d]^{e(-\alpha_i)}\\
\mathbb{G}_{m,S}  \ar@{^{(}->}[r]    &\mathbb{A}_{1,S}.
}
\end{equation}
Since $\sigma\subset \mathfrak{C}$, we have $-\Delta\subset \sigma^\vee$. Furthermore, we have the commutative diagram
\begin{equation}\label{toriodaltowonderful}
    \xymatrix{
T \ar@{^{(}->}[rr]\ar[d]  &&\overline{T_{\sigma}}\ar[d]\\
T_{\ad}  \ar@{^{(}->}[rr]^{(-\alpha_i)_{\alpha_i\in\Delta}}    &&\prod_{\Delta}\mathbb{A}_{1,S}
}
\end{equation}
where the left vertical arrow is the natural quotient morphism and the right vertical arrow is the morphism defined by the functions corresponding to $-\Delta$.

For a cocharacter $\delta\in X_{\ast}(T)\bigcap \sigma$, the limit point $\delta(0)\coloneq \lim\limits_{x \to 0} \delta(x)\in \overline{T_{\sigma}}(S)$ exists. This is because, by the construction of $\overline{T_{\sigma}}$, the limit exists if and only if $\langle\delta, x \rangle\geq 0$ for any $x\in\sigma^\vee \bigcap X^{\ast}(T)$.

By \cite[Chapter~I, Theorem~2]{toroidalembedding} and the remark after it, there exists a cocharacter 
$$\lambda\in X_{\ast}(T)$$ 
which lies inside the interior of $\sigma$ such that, for any $s\in S$, the geometric fiber $\lambda(0)_{\overline{k(s)}}\in \overline{T_{\sigma}}(\overline{k(s)})$ lies in the \emph{unique closed $T_{\overline{k(s)}}$-orbit} in $\overline{T_{\sigma}}_{\overline{k(s)}}$.

\subsection{A rational action}\label{subsectionrationalaction}
We keep the notations in \Cref{subsectionrationalaction}.
Let $\mathcal{A}_{\sigma}: G\times_{S}\overline{\Omega_{\sigma}}\times_{S}G\dashrightarrow\overline{\Omega_{\sigma}}$ be the $S$-rational morphism given by the $S$-rational action of $G\times_{S}G$ on $\Omega_G$. The goal of the present section is to show that 
$$e\times_{S}\overline{\Omega_{\sigma}}\times_{S}e \subset \Dom(\mathcal{A}_{\sigma}).$$
The overall strategy to achieve this goal is to reconstruct $\mathcal{A}_{\sigma}$ step by step from some basic rational morphisms which are given by explicit formulas, and along the way we extend definition domain in each step.
The reconstruction process is a generalization of the arguments in \cite{li2023equivariant}. We proceed by the following lemmas.

\begin{lemma}\label{singlereflectrational}
 Let us fix an enumeration $\{\Psi^-\backslash -\alpha_i, -\alpha_i,\alpha_i , \Psi^+\backslash \alpha_i\}$ of $\Psi$.
 We consider the principal open subscheme $V_i\coloneq D_{\overline{\Omega_{\sigma}}}(e(-\alpha_i)+xy)\subset \overline{\Omega_{\sigma}}$, where $x,y$ are the coordinates of $U_{-\alpha_i}$ and $U_{\alpha_{i}}$ given by \Cref{pinning} (which make sense because we have fixed an enumeration on $\Psi$). Then, there exists a unique morphism $f_i\colon V_i\longrightarrow  \overline{\Omega_{\sigma}}$  such that
    \begin{itemize}
        \item[(1)] $f_i$ extends the restriction of the conjugation by $n_{\alpha_i}$ on $G$ to $\Omega_G\bigcap V_i\subset G$;
        \item[(2)] $f_i$ sends $U_{\alpha}$ to $U_{s_i(\alpha)}$, for $\alpha\in\Psi$.
    \end{itemize}
\end{lemma}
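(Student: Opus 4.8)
The plan is to reduce the construction of $f_i$ to an explicit rank-one computation inside the $\SL_2$ (or $\PGL_2$) subgroup attached to $\alpha_i$, and then to verify that the resulting formula extends regularly across the toric boundary $\overline{T_{\sigma}}\setminus T$; this matches the overall strategy of reconstructing $\mathcal{A}_{\sigma}$ from explicit building blocks. First I would exploit the chosen enumeration of $\Psi$ to factor a section of $\Omega_G$ as $u_1^{-}\cdot p_{-\alpha_i}(x)\cdot\tau\cdot p_{\alpha_i}(y)\cdot u_1^{+}$, where $\tau$ is the $T$-component and $u_1^{\pm}$ are products of the root subgroups $U_{\beta}$ with $\beta\in\Psi^{\pm}\setminus\{\pm\alpha_i\}$. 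This is legitimate precisely because the enumeration places $-\alpha_i$ as the last factor of $U^{-}$ and $\alpha_i$ as the first factor of $U^{+}$, so these two coordinates are literally the $x,y$ of the statement.

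Next I would conjugate by $n_{\alpha_i}$ factor by factor. Since $s_i$ permutes $\Psi^{+}\setminus\{\alpha_i\}$ and $\Psi^{-}\setminus\{-\alpha_i\}$ within themselves, the outer pieces transform as $n_{\alpha_i}u_1^{\pm}n_{\alpha_i}^{-1}\in\prod_{\beta\in\Psi^{\pm}\setminus\{\pm\alpha_i\}}U_{\beta}$ by a polynomial (structure-constant) change of coordinates involving neither the torus nor any denominator, so these pieces extend over $\overline{\Omega_{\sigma}}$ for free. The only delicate factor is the rank-one block $p_{-\alpha_i}(x)\,\tau\,p_{\alpha_i}(y)$, which conjugates to $p_{\alpha_i}(-x)\,s_i(\tau)\,p_{-\alpha_i}(-y)$, i.e.\ in the wrong ($U^{+}TU^{-}$) order. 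I would restore big-cell order using the rank-one identity $p_{\alpha_i}(a)p_{-\alpha_i}(b)=p_{-\alpha_i}\!\bigl(\tfrac{b}{1+ab}\bigr)\,\alpha_i^{\vee}(1+ab)\,p_{\alpha_i}\!\bigl(\tfrac{a}{1+ab}\bigr)$, valid whenever $1+ab$ is invertible. A direct computation then shows that the unit $1+ab$ occurring here equals $\alpha_i(\tau)\cdot\bigl(e(-\alpha_i)+xy\bigr)$ on $\Omega_G$, that the new $T$-component is $\tau'=\tau\cdot\alpha_i^{\vee}\!\bigl(e(-\alpha_i)+xy\bigr)$, and that the new $U_{-\alpha_i}$- and $U_{\alpha_i}$-coordinates are $-y/\bigl(e(-\alpha_i)+xy\bigr)$ and $-x/\bigl(e(-\alpha_i)+xy\bigr)$; this is exactly why $V_i=D_{\overline{\Omega_{\sigma}}}\bigl(e(-\alpha_i)+xy\bigr)$ is the correct locus. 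Finally I would merge the new $U_{-\alpha_i}$- (resp.\ $U_{\alpha_i}$-) factor into the $U^{-}$- (resp.\ $U^{+}$-) part, reordering by the Chevalley commutation relations, which introduces no denominators beyond powers of $c\coloneq e(-\alpha_i)+xy$.

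The key point is then that the resulting tuple of coordinates defines a morphism on \emph{all} of $V_i$. The root coordinates are regular on $V_i$ because their only denominators are powers of $c$, which is invertible on $V_i=D(c)$. For the torus component I would check that $\chi(\tau')=\chi(\tau)\cdot c^{\langle\chi,\alpha_i^{\vee}\rangle}$ for every $\chi\in\sigma^{\vee}\cap X^{\ast}(T)$; since $\chi(\tau)$ is the monomial defining $\overline{T_{\sigma}}$ and $c$ is invertible on $V_i$, each such function is regular on $V_i$, so $\chi\mapsto\chi(\tau')$ is a semigroup homomorphism $\sigma^{\vee}\cap X^{\ast}(T)\to\mathcal{O}(V_i)$ and yields the desired morphism $V_i\to\overline{T_{\sigma}}$. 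This is where $\sigma\subset\mathfrak{C}$ enters: it forces $-\alpha_i\in\sigma^{\vee}$, so that $e(-\alpha_i)$, and hence $c$, is a regular function on $\overline{\Omega_{\sigma}}$ in the first place, as in \eqref{extensiontorus}. Assembling the three parts gives $f_i\colon V_i\to\overline{\Omega_{\sigma}}$; by construction it restricts to conjugation by $n_{\alpha_i}$ on $\Omega_G\cap V_i$, which gives (1), and the explicit formulas send $U_{\alpha}$ into $U_{s_i(\alpha)}$, which gives (2). For uniqueness I would use that $\Omega_G\cap V_i$ is schematically dense in $V_i$ (because $T$ is schematically dense in the $S$-flat $\overline{T_{\sigma}}$) and that $\overline{\Omega_{\sigma}}$ is separated over $S$, so any two extensions agree on a schematically dense open and therefore coincide.

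I expect the main obstacle to be verifying that the $T$-component genuinely lands in $\overline{T_{\sigma}}$ rather than merely in $T$: the naive conjugation moves $T$ by $s_i$, which does \emph{not} preserve $\overline{T_{\sigma}}$ since $\mathfrak{C}$ is not $s_i$-stable, and it is only the coroot correction $\alpha_i^{\vee}(c)$ — through the identity $\chi(\tau')=\chi(\tau)\,c^{\langle\chi,\alpha_i^{\vee}\rangle}$ together with the invertibility of $c$ on $V_i$ — that repairs this. A secondary, more bookkeeping-type difficulty is to carry out the factorizations and reorderings functorially over an arbitrary base $S$ and to track the signs coming from the Chevalley system, but these steps introduce no denominators and hence do not affect the extension across the boundary.
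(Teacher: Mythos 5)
Your proposal is correct and takes essentially the same route as the paper: both derive the explicit formula with denominator $c=e(-\alpha_i)+xy$ from the rank-one identity, correct the torus component by $\alpha_i^{\vee}(c)$ (which is exactly what makes it land in $\overline{T_{\sigma}}$, the paper phrasing this as $\alpha_i^{\vee}(D)$ acting on $t$ via $\nu$ and you phrasing it via the character identity $\chi(\tau')=\chi(\tau)c^{\langle\chi,\alpha_i^{\vee}\rangle}$), and obtain uniqueness from schematic density of $\Omega_G\bigcap V_i$. The only cosmetic difference is order of exposition: the paper writes down the formula defining $f_i$ first and then verifies on $\Omega_G\bigcap V_i$ that it extends conjugation by $n_{\alpha_i}$, whereas you derive the formula by performing the conjugation and then check it is regular on all of $V_i$.
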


\begin{proof}
    By \cite[exposé~XXIII, lemme~3.1.1 (iii)]{SGA3III} and the definition of Chevalley system \cite[exposé~XXIII, définition~6.1]{SGA3III}, for each root $\beta\neq \pm \alpha_i\in \Psi$, we have 
    $$\Ad_{n_{\alpha_i}}(p_\beta(x))=p_{s_i(\beta)}(\epsilon_\beta x),$$ 
    where $\epsilon_\beta=\pm 1$ are given in \emph{loc.~cit.} and $x\in \Gamma(S', \mathcal{O}_{S'})$ for a test $S$-scheme $S'$. By the argument after \cite[exposé~XXIII, définition~6.1]{SGA3III}, the above equality holds for $\beta=\pm \alpha_i$ with $\epsilon_\beta=-1$. For a section
    $$u:=\bigg(\prod_{\gamma\in \Psi^-\backslash \left\{-\alpha_i\right\}}p_\gamma(x_\gamma)\cdot p_{-\alpha_i}(x),\;t,\; p_{\alpha_i}(y)\cdot \prod_{\gamma\in \Psi^+ \backslash\left\{\alpha_i\right\}}p_\gamma(x_\gamma)\bigg)\in  V_i(S') \subset\overline{\Omega_{\sigma}}(S'),$$
    where $t\in \overline{T_{\sigma}}(S')$ and we have chosen an order on $\Psi^- \backslash\left\{-\alpha_i\right\}$ and $\Psi^+ \backslash\left\{\alpha_i\right\}$, let 
    $$D\coloneq e(-\alpha_i)(t)+xy.$$
    Then we define $f_i$ by sending $u$ to 
    \begin{equation}\label{defoff_i}
        \bigg(\prod_{\gamma\in \Psi^- \backslash\left\{-\alpha_i\right\}}p_{s_i(\gamma)}(\epsilon_\gamma x_\gamma)\cdot p_{-\alpha_i}\left(\frac{-y}{D}\right),\; \alpha_i^\vee(D)\cdot t
    ,\; p_{\alpha_i}\left(\frac{-x}{D}\right)\cdot \prod_{\gamma\in \Psi^+ \backslash\left\{\alpha_i\right\}}p_{s_i(\gamma)}(\epsilon_\gamma x_\gamma)\bigg),
    \end{equation}
    where, in the middle, $\alpha_i^\vee(D)$ acts on $t$ via $\nu$ ( \Cref{definitionofnu}). 
    
    It is clear that $V_i$ contains $U^-$ and $U^+$.
    The verification of (1) follows from a computation in the group $G$: assume that $t\in T(S')$, then, by \Cref{extensiontorus}, $D=\alpha_i(t)^{-1}+xy$ and 
    \begin{flalign*}
            &\Ad_{n_{\alpha_i}}\bigg(\prod_{\gamma\in \Psi^- \backslash\left\{-\alpha_i\right\}}p_\gamma(x_\gamma)\cdot p_{-\alpha_i}(x)\cdot t\cdot p_{\alpha_i}(y)\cdot \prod_{\gamma\in \Psi^+ \backslash\left\{\alpha_i\right\}}p_{\gamma}(x_\gamma)\bigg)\\
            =&\prod_{\gamma\in \Psi^- \backslash\left\{-\alpha_i\right\}}p_{s_i(\gamma)}(\epsilon_\gamma x_\gamma)\cdot p_{\alpha_i}(-x)\cdot (t\cdot \alpha_i^{\vee}(\alpha_i(t))^{-1}) \cdot p_{-\alpha_i}(-y)\cdot \prod_{\gamma\in \Psi^+ \backslash\left\{\alpha_i\right\}}p_{s_i(\gamma)}(\epsilon_\gamma x_\gamma)\\
             =&\prod_{\gamma\in \Psi^- \backslash\left\{-\alpha_i\right\}}p_{s_i(\gamma)}(\epsilon_\gamma x_\gamma)\cdot p_{\alpha_i}(-x)\cdot p_{-\alpha_i}(-\alpha_i(t)y)\cdot (t\cdot \alpha_i^{\vee}(\alpha_i(t))^{-1}) \cdot \prod_{\gamma\in \Psi^+ \backslash\left\{\alpha_i\right\}}p_{s_i(\gamma)}(\epsilon_\gamma x_\gamma)\\
             =&\prod_{\gamma\in \Psi^- \backslash\left\{-\alpha_i\right\}}p_{s_i(\gamma)}(\epsilon_\gamma x_\gamma)\cdot p_{-\alpha_i}\left(\frac{-y}{D}\right)\cdot \alpha_i^{\vee}(1+\alpha_i(t)xy)\cdot p_{\alpha_i}\left(\frac{-x}{1+\alpha_i(t)xy}\right)\cdot \\&(t\cdot \alpha_i^{\vee}(\alpha_i(t))^{-1}) \cdot \prod_{\gamma\in \Psi^+ \backslash\left\{\alpha_i\right\}}p_{s_i(\gamma)}(\epsilon_\gamma x_\gamma)\\
            =&\prod_{\gamma\in \Psi^- \backslash\left\{-\alpha_i\right\}}p_{s_i(\gamma)}(\epsilon_\gamma x_\gamma)\cdot p_{-\alpha_i}\left(\frac{-y}{D}\right)\cdot (t\cdot \alpha_i^{\vee}(D))
    \cdot p_{\alpha_i}\left(\frac{-x}{D}\right)\cdot \prod_{\gamma\in \Psi^+ \backslash\left\{a\right\}}p_{s_i(\gamma)}(\epsilon_\gamma x_\gamma),
    \end{flalign*} 
    where \cite[exposé~XXII, définitions~1.5 (b)]{SGA3III} is used in the first equality and \cite[exposé~XXII, notations~1.3]{SGA3III} is used in the third equality. The uniqueness follows from (1) and the schematic density of $\Omega_G$ in $\overline{\Omega_{\sigma}}$, in particular, the definition of $f_i$ does not depend on the choice of the order on $\Psi^- \backslash\left\{-\alpha_i\right\}$ and $\Psi^+ \backslash\left\{\alpha_i\right\}$. The claim (2) follows from \Cref{defoff_i}.
\end{proof}

We choose an (not necessarily reduced) expression for the longest element $w_0$ in the Weyl group $W$: $w_0=s_{i_m}\cdot s_{i_{m-1}}\cdot...\cdot s_{i_1}$, where $m\in\mathbb{Z}_{>0}$ is greater or equal to the length of $w_0$, and $1 \leq i_1,..., i_m\leq l$. Set $n_0\coloneq n_{\alpha_{i_m}}\cdot n_{\alpha_{i_{m-1}}}...\cdot n_{\alpha_{i_1}}\in\Norm_G(T)(S)$. Then $n_0\in\Norm_G(T)(S)$ is a representative of $w_0$.

\begin{lemma}\label{w_0invariantsub}
    There exist two $S$-rational morphisms
    $$f\colon \overline{\Omega_{\sigma}}\dashrightarrow \overline{\Omega_{\sigma}}\quad\text{and} \quad f'\colon \overline{\Omega_{\sigma}}\dashrightarrow \overline{\Omega_{\sigma}}$$
    such that
    \begin{itemize}
    \setlength{\leftmargin}{0pt}
    \item[(1)] $f$ (resp., $f'$) extends the conjugation by $n_0$ (resp., $n_0^{-1}$) on $\Omega_G\bigcap \Dom(f)$ 
    (resp.,$\Omega_G\bigcap \Dom(f')$); 
    \item[(2)] if the base scheme $S$ is the spectrum of a strictly Henselian local ring, there exist $u_0^+\in ~U^+(S)$ and $u_0^-\in U^-(S)$ such that $(u_0^-, \lambda(0), u_0^+)\in \Dom(f)(S)\bigcap f^{-1}(\Dom(f')(S))$.
    \end{itemize}
\end{lemma}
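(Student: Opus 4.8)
The plan is to build $f$ and $f'$ by composing the single–reflection extensions $f_i$ of \Cref{singlereflectrational} along the chosen word for $w_0$, and then to extract (2) from the location of $\lambda(0)$ after reducing to the special fibre. For (1), since $n_0=n_{\alpha_{i_m}}\cdots n_{\alpha_{i_1}}$ and conjugation is a homomorphism, I would set
$$f\coloneq f_{i_m}\circ f_{i_{m-1}}\circ\cdots\circ f_{i_1},$$
the composite $S$-rational morphism with domain $V_{i_1}\cap f_{i_1}^{-1}(V_{i_2})\cap\cdots$. Each $f_{i_j}$ agrees with $\Ad_{n_{\alpha_{i_j}}}$ on $\Omega_G\cap V_{i_j}$, and $\Ad_{n_{\alpha_{i_j}}}$ carries the schematically dense open $\Omega_G$ to a dense open, so the successive preimages still meet $\Omega_G$ densely; hence $f$ is a genuine rational morphism extending $\Ad_{n_0}$ on $\Omega_G\cap\Dom(f)$ by composing the extension property \Cref{singlereflectrational}(1). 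For $f'$ I would use that $n_0^{-1}$ again represents $w_0$ and that $n_{\alpha_i}^{-1}$ differs from $n_{\alpha_i}$ by the torus section $\alpha_i^\vee(-1)$; since conjugation by $\alpha_i^\vee(-1)$ extends to an honest automorphism $c_i$ of $\overline{\Omega_{\sigma}}=U^-\times_S\overline{T_{\sigma}}\times_S U^+$ (trivial on $\overline{T_{\sigma}}$, scaling the root coordinates of $U^\pm$), the map $g_i\coloneq c_i\circ f_i$ extends $\Ad_{n_{\alpha_i}^{-1}}$, and $f'\coloneq g_{i_1}\circ\cdots\circ g_{i_m}$ extends $\Ad_{n_0^{-1}}$. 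This settles (1).

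For (2), with $S=\Spec(R)$ and $R$ strictly Henselian local with residue field $k$ and closed point $s_0$, I would first reduce to the special fibre. The target $W\coloneq\Dom(f)\cap f^{-1}(\Dom(f'))$ is open, and because the domains $V_i=D_{\overline{\Omega_{\sigma}}}(e(-\alpha_i)+xy)$ are cut out by explicit equations, $\Dom(f)$ and $W$ are explicit opens whose formation commutes with the base change to $k$. As $R$ is local, a section $\Spec(R)\to\overline{\Omega_{\sigma}}$ factors through $W$ as soon as $s_0$ lands in $W$, and since $U^\pm$ are affine spaces via \eqref{pinning}, any $k$-point of $U^\pm$ lifts to $R$. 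Thus it suffices to find $u_0^\pm\in U^\pm(k)$ with $(u_0^-,\lambda(0),u_0^+)\in W_k$. Here I would observe that over $k$ the composite $f'_k\circ f_k$ extends conjugation by $n_0^{-1}n_0=e$, i.e.\ the identity; identifying $(\overline{\Omega_{\sigma}})_k$ with the big cell of the classical toroidal embedding $\overline{G_\sigma}$ of \Cref{toroidalembeddingtheorem}, on which $\Ad_{n_0}$ is the globally defined automorphism given by the $(n_0,n_0)$-action of $G\times_k G$, one finds $\Dom(f_k)=\{P\in\text{big cell}:\Ad_{n_0}(P)\in\text{big cell}\}$ and consequently $W_k=\Dom(f_k)$. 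So I only need to place the point in $\Dom(f_k)$.

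The mechanism that makes this work is the position of $\lambda(0)$. As $\lambda$ lies in the interior of $\sigma\subset\mathfrak{C}$ we have $\langle-\alpha_i,\lambda\rangle>0$ for every simple root, hence $e(-\alpha_i)(\lambda(0))=0$; equivalently $-\alpha_i\in\sigma^\vee\setminus\sigma^\perp$ vanishes on the unique closed $T$-orbit, which contains $\lambda(0)$. Since that orbit is $T$-stable and each $f_{i_r}$ multiplies the $\overline{T_{\sigma}}$-coordinate by $\alpha_{i_r}^\vee(D_r)\in T$, the torus coordinate stays in the closed orbit all along the composition, and by \eqref{defoff_i} every denominator degenerates to a monomial $D_r=x_r y_r$ in the current unipotent coordinates. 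Therefore choosing $u_0^\pm$ with all root-group coordinates equal to units keeps each $D_r$ invertible, which is exactly the condition for the point to lie in $\Dom(f_k)=W_k$.

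The hard part is this last step: controlling the unipotent coordinates $x_r,y_r$ through the composition, because rewriting each product $\prod p_{s_{i}(\gamma)}(\epsilon_\gamma x_\gamma)$ in \eqref{defoff_i} back into standard order introduces Chevalley structure-constant corrections, so I must check that the coordinates feeding each $D_r$ do not degenerate. I expect to handle this not through one explicit value but by genericity: the locus $\{(u^-,u^+):(u^-,\lambda(0),u^+)\in\Dom(f_k)\}$ is open in the irreducible $U^-\times U^+$, and it is nonempty because each $D_r$ is a \emph{nonzero} rational function of the coordinates of $u_0^\pm$ — the linear term in the reflection-transported coordinate survives the structure-constant corrections, so no $D_r$ can vanish identically on the slice. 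Verifying this surviving-leading-term claim, i.e.\ that the induction on $r$ never forces a needed coordinate to zero, is the one genuinely technical point; granting it, any $k$-point of the resulting dense open, lifted to $R$, yields the desired $(u_0^-,\lambda(0),u_0^+)$.
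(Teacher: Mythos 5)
Your part (1) is essentially the paper's: $f=f_{i_m}\circ\cdots\circ f_{i_1}$ is exactly the paper's definition, and your variant for $f'$ (twisting each $f_i$ by the honest automorphism of $\overline{\Omega_{\sigma}}$ given by conjugation by $\alpha_i^\vee(-1)$, rather than writing $n_{\alpha_i}^{-1}=n_{\alpha_i}^3$ and composing the $f_i$'s as the paper does) is a legitimate alternative. The genuine gap is in part (2), and you name it yourself: the claim that no successive denominator $D_r$ vanishes identically on the slice $\{(u^-,u^+): \text{torus coordinate}=\lambda(0)\}$ is the crux of the whole lemma, and ``the linear term survives the structure-constant corrections'' is a heuristic, not a proof --- after each reflection the unipotent output must be re-expanded in the root enumeration adapted to the \emph{next} simple root, and you have no control over the resulting coordinate functions through $m$ such rewritings. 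The paper closes precisely this gap with a structural device your proposal lacks: for each $j$ it introduces the map $\underline{f_j}$ on $\underline{V_j}\coloneq D_{U^-\times_S U^+}(xy)$, obtained from \eqref{defoff_i} by deleting the $\overline{T_{\sigma}}$-factor and setting $D=xy$, and observes that $\underline{f_j}$ is an \emph{automorphism} of $\underline{V_j}$ (the new $\mp\alpha_j$-coordinates are $-1/x$ and $-1/y$, whose product is again invertible). Since $e(-\alpha_i)(t\cdot\lambda(0))=0$ for every $t\in T$ and every simple root, and since the torus coordinate stays in $T\cdot\lambda(0)$ throughout the composition (each step multiplies it by $\alpha_{i_r}^\vee(D_r)\in T$), the domain conditions for $f_{i_m}\circ\cdots\circ f_{i_1}$ along this slice are \emph{exactly} the domain conditions for composing the $\underline{f_{i_j}}$'s. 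Because each $\underline{f_{i_j}}$ is an automorphism of the $S$-dense open $\underline{V_{i_j}}$, the backward induction (intersect with the next $\underline{V_{i_j}}$, pull back, repeat) produces an $S$-dense open $\underline{V_0}\subset U^-\times_S U^+$: density is automatic, and no tracking of leading terms is ever needed. A section of $\underline{V_0}$ over the strictly Henselian base then exists by \cite[\S~2.3, Proposition~5]{BLR}.

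A second, smaller defect: your reduction to the closed fibre conflates two different opens. The identification $W_k=\Dom(f_k)=\{P : \Ad_{n_0}(P)\ \text{lies in the big cell}\}$ concerns the \emph{maximal} domain of the base-changed rational map, whereas the open whose formation genuinely commutes with base change (and for which lifting a $k$-point to an $R$-point is guaranteed to work) is the explicit composite domain $V_{i_1}\cap f_{i_1}^{-1}(V_{i_2})\cap\cdots$. The inclusion $\Dom(f)_k\subseteq\Dom(f_k)$ can be strict, so a $k$-point of $\Dom(f_k)$ lifted to $R$ need not lie in $\Dom(f)$, which is what the statement of the lemma requires. Working with the explicit composite domain throughout (as the paper implicitly does) removes this mismatch --- but then you are back to the unproven density claim above, which is where the real content lies.
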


\begin{proof}
    We set
    $f=f_{i_m}...f_{i_1}$ where $f_i$ is defined in \Cref{singlereflectrational} for any $1\leq i\leq l$ and is viewed as an $S$-rational endomorphism of $\overline{\Omega_{\sigma}}$. 
    Note that $n_0^{-1}=n_{\alpha_{i_1}}^3\cdot n_{\alpha_{i_{2}}}^3\cdot...\cdot n_{\alpha_{i_m}}^3$ because of the equalities $n_{\alpha_i}^4=e$ in \cite[exposé~XX, théorème~3.1 (v)]{SGA3III}. Hence we define $f'$ in a similar way. 

    For each simple root $\alpha_j\in \Delta$, we fix an enumeration of $\Psi$ and then the coordinates $x$ and $y$ of $U_{-\alpha_j}$ and $U_{\alpha_j}$ as in \Cref{singlereflectrational} and let $\underline{V_j}\coloneq D_{U^-\times_S U^+}(xy)\subset U^-\times_S U^+ $.  We consider an $S$-automorphism of $\underline{V_j}$, which is obtained from the $f_j$ (\Cref{defoff_i}) by dropping the $\overline{T_{\sigma}}$-component and letting $D=xy$:
    \begin{equation}
    \underline{f_j}\colon \underline{V_j}\longrightarrow \underline{V_j}
    \end{equation}
which sends
$$\bigg(\prod_{\gamma\in \Psi^- \backslash\left\{-\alpha_j\right\}}p_\gamma(x_\gamma)\cdot p_{-\alpha_j}(x),\; p_{\alpha_j}(y)\cdot \prod_{\gamma\in \Psi^+ \backslash\left\{\alpha_j\right\}}p_\gamma(x_\gamma)\bigg)\in  \underline{V_j}(S')$$
to 
$$ \bigg(\prod_{\gamma\in \Psi^- \backslash\left\{-\alpha_j\right\}}p_{s_j(\gamma)}(\epsilon_\gamma x_\gamma)\cdot p_{-\alpha_j}\left(\frac{-1}{x}\right),\; p_{\alpha_j}\left(\frac{-1}{y}\right)\cdot \prod_{\gamma\in \Psi^+ \backslash\left\{\alpha_j\right\}}p_{s_j(\gamma)}(\epsilon_\gamma x_\gamma)\bigg).$$
By intersecting $\underline{V_{i_m}}$ with $\underline{V_{i_{m-1}}}$ and taking the inverse image along the isomorphism $\underline{f_{i_{m-1}}}$, we get an $S$-dense open subscheme of $\underline{V_{i_{m-1}}}$. By further intersecting the open subscheme with $\underline{V_{i_{m-2}}}$ and taking the inverse image along $\underline{f_{i_{m-2}}}$, we get an $S$-dense open subscheme of $\underline{V_{i_{m-2}}}$. Continuing this process, after finitely many steps, we end up with an $S$-dense open subscheme $\underline{V_0}\subset \underline{V_{i_1}}$.

Now suppose that $(u_0^-,u_0^+)\in \underline{V_0}(S)$. Since $\lambda$ lies inside the interior of the negative Weyl chamber $\mathfrak{C}$, we have that
$$e(-\alpha_i)(t\cdot \lambda(0))=0,\;\text{for any}\;t\in T(S)\;\text{and}\; \alpha_i\in\Delta.$$
Combined with \Cref{defoff_i},
we have that $(u_0^-,\lambda(0),u_0^+)\in \Dom(f)(S)$. 
Under the assumption of (2), the existence of such a section $(u_0^-,u_0^+)$ is ensured by \cite[\S~2.3, Proposition~5]{BLR}. By a similar argument and further shrinking, we can also arrange that $(u_0^-,\lambda(0),u_0^+)\in f^{-1}(\Dom(f')(S))$. Then (2) follows. The claim (1) follows from \Cref{singlereflectrational} (1).
\end{proof}

The $f$ and $f'$ in \Cref{w_0invariantsub} is used to swap the positive and the negative root subgroups through the middle $\overline{T_{\sigma}}$, which is an important operation in the following result, even if they are merely defined over an open subscheme.

\begin{lemma}\label{extensionofmultiplication}
    Consider the $S$-rational morphism 
    $$\Theta: U^+\times_S \overline{T_{\sigma}}\times_S U^-\dashrightarrow \overline{\Omega_{\sigma}}$$
    given by the group multiplication $U^+\times_S T\times_S U^-\longrightarrow G$. Then 
    $$e\times_S \overline{T_{\sigma}}\times_S e\subset \Dom(\Theta).$$
\end{lemma}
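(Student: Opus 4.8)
The plan is to show that the open subscheme $\Dom(\Theta)$ contains the closed subscheme $e\times_S\overline{T_{\sigma}}\times_S e$. Since $\Theta$ restricted to the dense open $e\times_S T\times_S e$ is manifestly a morphism (it is the composite $T\subset\Omega_G\xrightarrow{\nu}\overline{\Omega_{\sigma}}$), the intersection $\Dom(\Theta)\cap(e\times_S\overline{T_{\sigma}}\times_S e)$ is a dense open of $e\times_S\overline{T_{\sigma}}\times_S e\cong\overline{T_{\sigma}}$. I would first observe that this open is stable under the torus $T$ acting on the source by $t_0\cdot(v^+,s,v^-)=(t_0v^+t_0^{-1},\,t_0s,\,v^-)$ and on the target by left translation, for which $\Theta$ is visibly equivariant; on the locus $e\times_S\overline{T_{\sigma}}\times_S e$ this is just the toric left action on $\overline{T_{\sigma}}$. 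Since the unique closed $T$-orbit $O_0\subset\overline{T_{\sigma}}$ lies in the closure of every orbit, a $T$-stable open containing $O_0$ must be all of $\overline{T_{\sigma}}$, and by transitivity of $T$ on $O_0$ it suffices to exhibit a single point of $e\times_S O_0\times_S e$ in $\Dom(\Theta)$. Because the formation of $\Dom(\Theta)$ commutes with flat base change and the desired containment may be tested after passing to strict henselizations at geometric points, I reduce to $S=\Spec(R)$ with $R$ strictly Henselian local, so that \Cref{w_0invariantsub}(2) becomes available; it then remains to prove $(e,\lambda(0),e)\in\Dom(\Theta)$, where $\lambda(0)\in O_0$.

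For the crux, the idea---anticipated in the remark preceding the statement---is to swap the unipotent factors of $v^+sv^-$ through the middle $\overline{T_{\sigma}}$ using the maps $f,f'$ of \Cref{w_0invariantsub}. Conjugation by the representative $n_0$ of $w_0$ interchanges $U^+$ and $U^-$, so that $\Ad_{n_0^{-1}}(v^+sv^-)$ lies in $U^-\cdot T\cdot U^+=\Omega_G$ with its factors in the order matching $\overline{\Omega_{\sigma}}$; concretely $\Theta=f\circ c$ as $S$-rational maps, where $c(v^+,s,v^-)=(n_0^{-1}v^+n_0,\,n_0^{-1}sn_0,\,n_0^{-1}v^-n_0)$ and $f$ extends $\Ad_{n_0}$. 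The plan is to produce the extension of $\Theta$ on a neighbourhood of $(e,\lambda(0),e)$ as $f'$ applied to a manifestly regular $U^-\overline{T_{\sigma}}U^+$-expression, invoking \Cref{w_0invariantsub}(2) to guarantee that the relevant section, built from the auxiliary $u_0^{\pm}$, lands in $\Dom(f')$, where $f$ and $f'$ remain defined even over $O_0$. Once a morphism on a neighbourhood of $(e,\lambda(0),e)$ is in hand, its agreement with the group multiplication is automatic: both coincide on the schematically dense open $\Omega_G$, and $\overline{\Omega_{\sigma}}$ is separated, so the two agree wherever defined.

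The main obstacle is precisely that $c$ applies $w_0^{-1}$ to the $\overline{T_{\sigma}}$-coordinate, and since $w_0(\sigma)$ lies in the \emph{positive} Weyl chamber, the toric limit $w_0^{-1}(\lambda(0))$ does not exist in $\overline{T_{\sigma}}$; thus $c$ is undefined at $(e,\lambda(0),e)$, and the identity $\Theta=f\circ c$, being an equality of rational maps on a dense open, cannot simply be evaluated there. The technical heart, and the reason \Cref{w_0invariantsub}(2) is phrased as it is, is that $f$ and $f'$ need only be used \emph{locally}, near the specific section $(u_0^-,\lambda(0),u_0^+)$: there the defining denominators of the shape $e(-\alpha_i)+x_iy_i$ appearing in \Cref{singlereflectrational} are units over the closed orbit, precisely because $\lambda$ being interior to $\sigma$ forces each $e(-\alpha_i)$ to vanish on $O_0$ while the chosen $u_0^{\pm}$ make the products $x_iy_i$ invertible. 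I expect the remaining work to be the verification that the rearranged section indeed lands in $\Dom(f')$ and that the resulting morphism extends the multiplication---that is, tracking these denominators through the iterated rewriting $f=f_{i_m}\cdots f_{i_1}$ indexed by the expression of $w_0$. The model computation is the rank-one case along a single simple root $\alpha_i$, where $v^+sv^-$ rewrites into $U^-\overline{T_{\sigma}}U^+$-coordinates with denominator $1+e(-\alpha_i)(s)\,x_iy_i$, a unit in a neighbourhood of $O_0$, giving explicit regular formulas for the extension there; the general case is assembled from this via the commutation relations of \Cref{singlereflectrational} and \Cref{defoff_i}.
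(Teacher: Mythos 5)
Your structural reductions coincide with the paper's own proof: reduce by flat descent and a limit argument to a strictly Henselian local base, observe that $\Dom(\Theta)$ is stable under the $T$-action $(v^+,s,v^-)\mapsto(tv^+t^{-1},ts,v^-)$ (the paper justifies the equivariance of $\Theta$ by schematic density of $G$ and descends taking $S'=T$), and thereby reduce to exhibiting the single point $(e,\lambda(0),e)$ in $\Dom(\Theta)$. Your diagnosis of the obstacle at that point is also exactly right: $f$ itself is undefined at $(e,\lambda(0),e)$ (every denominator $e(-\alpha_i)+xy$ vanishes there), so the rational identity $\Theta=f\circ c$ cannot be evaluated, and one is forced to work near the auxiliary section $(u_0^-,\lambda(0),u_0^+)$ provided by \Cref{w_0invariantsub}(2).

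The crux, however, is left as a plan ("$f'$ applied to a manifestly regular expression $\ldots$ built from the auxiliary $u_0^{\pm}$"), and what is missing is a genuine idea rather than routine verification. The paper first moves the unipotent arguments into generic position by the auxiliary sections: writing $u^+(u_0^-)^{-1}=(\dot{u}^-,\dot{t},\dot{u}^+)$ and $(u_0^+)^{-1}u^-=(\dot{v}^-,\dot{s},\dot{v}^+)$ in $\Omega_G$, one has the rearrangement identity
$$u^+\,\overline{t}\,u^- \;=\; \dot{u}^-\,(\dot{t}\dot{u}^+\dot{t}^{-1})\cdot\bigl[(\dot{t}u_0^-\dot{t}^{-1})(\dot{t}\,\overline{t}\,\dot{s})(\dot{s}^{-1}u_0^+\dot{s})\bigr]\cdot(\dot{s}^{-1}\dot{v}^-\dot{s})\,\dot{v}^+ ,$$
in which the bracketed triple specializes to exactly $(u_0^-,\lambda(0),u_0^+)$ at $(u^+,\overline{t},u^-)=(e,\lambda(0),e)$; so $f$ applies to it, $f'$ then applies to $f$'s output flanked by $n_0(\dot{t}\dot{u}^+\dot{t}^{-1})n_0^{-1}$ and $n_0(\dot{s}^{-1}\dot{v}^-\dot{s})n_0^{-1}$, and the outer factors $\dot{u}^-$, $\dot{v}^+$ are absorbed into the $U^{\mp}$-coordinates of the result. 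It is this identity that gives your phrase "built from $u_0^{\pm}$" actual content, and without it there is no candidate morphism to compare with $\Theta$. Note also that the residual work you anticipate --- tracking denominators through the iterated rewriting $f=f_{i_m}\cdots f_{i_1}$ --- is not where the difficulty lies: that bookkeeping is packaged once and for all in \Cref{w_0invariantsub}(2), which is invoked only at the two points $(u_0^-,\lambda(0),u_0^+)$ and $f(u_0^-,\lambda(0),u_0^+)$. Conversely, your rank-one model computation, while correct, does not assemble root-by-root in higher rank: the groups $U^{\pm}$ are non-abelian, and straightening $u^+\overline{t}u^-$ directly produces denominators with no control near the closed orbit; the paper's global conjugation trick exists precisely to bypass that assembly.
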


\begin{proof} 
    First of all, by flat descent \cite[exposé~XVIII, proposition~1.6]{SGA3II} and a limit argument, we can assume that the base $S$ is the spectrum of a strictly Henselian local ring.
    We will first construct an $S$-rational morphism $\Theta':U^+\times_S \overline{T_{\sigma}}\times_S U^-\dashrightarrow \overline{\Omega_{\sigma}}$ such that $(e,\lambda(0),e) \subset \Dom(\Theta')$ and $\Theta=\Theta'$ as $S$-rational morphisms. For this, we fix a section $(u_0^-, \lambda(0), u_0^+)\in \overline{\Omega_{\sigma}}(S)$ as in \Cref{w_0invariantsub}~(2).
    For any test $S$-scheme $S'$ and a section $(u^+, \overline{t}, u^-)\in (U^+\times_S \overline{T_{\sigma}}\times_S U^-)(S')$, we define $\Theta'(u^+, \overline{t}, u^-)$ step by step and along the way we shrink the definition domain of $\Theta'$. 

 
    As the \emph{first step}, by restricting to nonempty open subschemes of $U^+$ and $U^-$, we can have 
    $$u^+(u_0^-)^{-1}=(\dot{u}^-,\dot{t}, \dot{u}^+)\in \Omega_G(S'),$$
    $$(u_0^+)^{-1}u^-=(\dot{v}^-,\dot{s}, \dot{v}^+)\in \Omega_G(S').$$

    
    As the \emph{second step}, we  let  
    $$f(\dot{t}u_0^-\dot{t}^{-1}, \dot{t}\overline{t}\dot{s}, \dot{s}^{-1}u_0^+\dot{s})=(\widetilde{u}^-, \widetilde{t}, \widetilde{u}^+).$$
    We further restrict to an open subscheme $D'\subset U^+\times_S\overline{T_{\sigma}}\times_S U^-$ so that the section $$f'((n_0(\dot{t}\dot{u}^+\dot{t}^{-1})n_0^{-1})\widetilde{u}^-,\;\widetilde{t},\;\widetilde{u}^+(n_0(\dot{s}^{-1}\dot{v}^-\dot{s})n_0^{-1})) \in\overline{\Omega_{\sigma}}(S')$$
is well-defined, where $n_0$ is defined above \Cref{w_0invariantsub}, and we denote this section by 
$(\widetilde{u}^-_0,\widetilde{t}_0,\widetilde{v}^+_0).$
    
    
    As the \emph{third step}, we define
    $$\Theta'(u^+, \overline{t}, u^-)=(\dot{u}^-\widetilde{u}^-_0,\widetilde{t}_0,\widetilde{v}^+_0\dot{v}^+)\in \overline{\Omega_{\sigma}}(S').$$

    The fact that $\Theta'\vert_{U^+\times_S T\times_S U^-}$ coincides with the group multiplication of $G$ follows from the following computation in the group $G$ when $\overline{t}\in T(S')$:
    \begin{equation*}
       \begin{aligned}
        \Theta'(u^+, \overline{t}, u^-) 
        &=\dot{u}^-\widetilde{u}^-_0\widetilde{t}_0\widetilde{v}^+_0\dot{v}^+\\
        &= \dot{u}^-f'((n_0(\dot{t}\dot{u}^+\dot{t}^{-1})n_0^{-1})\widetilde{u}^-,\;\widetilde{t},\;\widetilde{u}^+(n_0(\dot{s}^{-1}\dot{v}^-\dot{s})n_0^{-1}))\dot{v}^+\\
        &= \dot{u}^- (\dot{t}\dot{u}^+\dot{t}^{-1})n_0^{-1}\widetilde{u}^-\widetilde{t}\widetilde{u}^+n_0(\dot{s}^{-1}\dot{v}^-\dot{s})  \dot{v}^+\\
        &= \dot{u}^- (\dot{t}\dot{u}^+\dot{t}^{-1})n_0^{-1}f(\dot{t}u_0^-\dot{t}^{-1}, \dot{t}\overline{t}\dot{s}, \dot{s}^{-1}u_0^+\dot{s})n_0(\dot{s}^{-1}\dot{v}^-\dot{s})  \dot{v}^+\\
        &= \dot{u}^- (\dot{t}\dot{u}^+\dot{t}^{-1})\dot{t}u_0^-\dot{t}^{-1}\dot{t}\overline{t}\dot{s} \dot{s}^{-1}u_0^+\dot{s}(\dot{s}^{-1}\dot{v}^-\dot{s})  \dot{v}^+\\
        &= \dot{u}^- \dot{t}\dot{u}^+u_0^-\overline{t}u_0^+\dot{v}^-\dot{s}\dot{v}^+
        =u^+\overline{t}u^-,
       \end{aligned}
    \end{equation*}
    where \Cref{w_0invariantsub}~(1) is used in the third, fifth equalities. 
    Hence we have $\Theta'=\Theta$ as $S$-rational morphisms. By the choice of $u_0^-$ and $u_0^+$ and \Cref{w_0invariantsub}~(2), we have $$ (e,\lambda(0),e)\in D'\subset \Dom(\Theta).$$

    Now we consider the following diagram of $S'$-rational morphisms    
\begin{equation}\label{diagramtheta}
\begin{gathered}
    \xymatrix{
U^+_{S'}\times_{S'} (\overline{T_{\sigma}})_{S'}\times_{S'} U^-_{S'}\ar@{-->}[r]^-{\Theta}\ar[d]^{M(t)} & (\overline{\Omega_{\sigma}})_{S'}\ar[d]^{M'(t)} &\\
U^+_{S'} \times_{S'} (\overline{T_{\sigma}})_{S'}\times_{S'} U^-_{S'}\ar@{-->}[r]^-{\Theta} &  (\overline{\Omega_{\sigma}})_{S'}, & }
\end{gathered}
\end{equation}
where $M(t)$ sends a section $(v^+, s, v^-)\in (U^+\times_S \overline{T_{\sigma}}\times_S U^-)(S')$ to $(tv^+t^{-1},ts, v^-)$ and $M'(t)$ sends a section $(u^-, w, u^+)\in\overline{\Omega_{\sigma}}(S')$ to $(tu^-t^{-1}, tw, u^+)$. The above diagram is commutative because so is it after intersecting with $G_{S'} $.

Therefore $\Dom(\Theta_{S'} )$ is stable under the $S$-endomorphism $M(t)$ for any $t\in T(S')$. In particular, if we take $S'$ to be $T$ itself and consider the identity morphism $\Id_T$, by flat descent \cite[\S~2.5, Proposition~6]{BLR}, we obtain that $\Dom(\Theta)$ is stable under the action of $T$. Note that by the definition of $\Theta$, $\Dom(\Theta)$ already contains $e\times_S T\times_S e$. On the other hand, we also have that $(e,\lambda(0),e)\in \Dom(\Theta)$ where, by our choice, $\lambda(0)$ intersects every unique closed orbit in each geometric fiber of $\overline{T_{\sigma}}$. Hence, we can conclude that $e\times_S \overline{T_{\sigma}}\times_S e\subset \Dom(\Theta)$. 
\end{proof}

Now we are in the position to show the main result of \Cref{subsectionrationalaction}. Our proof of the following result is, in spirit, similar to the proof of \cite[exposé~XXV, proposition~2.9]{SGA3III} which is a crux to establish the Existence Theorem over $\mathbb{Z}$ in SGA3. The artful idea of \emph{loc. cit.} is also well illustrated in the proof of \cite[Proposition~6.3.11]{redctiveconrad} by an $\SL_2$-computation.

\begin{theorem}\label{theoremrationalaction}
    The definition domain of the $S$-rational morphism $$\mathcal{A}_{\sigma}\colon G\times_S\overline{\Omega_{\sigma}}\times_S G\dashrightarrow \overline{\Omega_{\sigma}}$$ defined by the group multiplication $G\times_S \Omega_G\times_S G\rightarrow G$ by $(g_1,\omega,g_2)\mapsto g_1\omega g_2^{-1}$ contains $e\times_S\overline{\Omega_{\sigma}}\times_S e$. 
    Moreover, we have that
        
    (1) For every $x\in S$, the geometric fiber $(\mathcal{A}_{\sigma})_{\overline{k(x)}}$ agrees with the restriction of the rational action of $G_{\overline{k(x)}}\times_{\overline{k(x)}}G_{\overline{k(x)}}$ on the big cell $\overline{\Omega_{\sigma}}_{\overline{k(x)}}$ of the toroidal embedding of $G_{\overline{k(x)}}$ over $\overline{k(x)}$ defined by $\sigma$, c.f., \Cref{sectionclassicaltoroidalembedding}.

    (2) The restriction $\mathcal{A}_{\sigma}\vert_{e\times_S\overline{\Omega_{\sigma}}\times_S e}$ is the projection onto the middle factor. 
\end{theorem}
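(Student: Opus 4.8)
The plan is to reconstruct $\mathcal{A}_{\sigma}$ near a point $(e,\overline{\omega},e)$ out of the extended multiplication $\Theta$ of \Cref{extensionofmultiplication} together with operations that manifestly preserve the big cell $\overline{\Omega_{\sigma}}=U^-\times_S\overline{T_{\sigma}}\times_S U^+$. As in the proof of \Cref{extensionofmultiplication}, by flat descent and a limit argument I would first reduce to the case where $S$ is the spectrum of a strictly Henselian local ring, and fix a section $\overline{\omega}=(u^-,\overline{t},u^+)\in\overline{\Omega_{\sigma}}(S)$; since $\Dom(\mathcal{A}_{\sigma})$ is open, it suffices to produce an open neighbourhood of $(e,\overline{\omega},e)$ in $G\times_S\overline{\Omega_{\sigma}}\times_S G$ on which an explicitly defined morphism coincides, as an $S$-rational morphism, with $\mathcal{A}_{\sigma}$.

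For $g_1,g_2$ near $e$ the open cell $\Omega_G$ provides Gauss decompositions $g_1=a^-a_0a^+$ and $g_2^{-1}=c^+c_0c^-$ with $a^{\pm},c^{\pm}\in U^{\pm}$ and $a_0,c_0\in T$. Among the resulting factors, left multiplication by $U^-$ and $T$ and right multiplication by $U^+$ and $T$ visibly carry a big-cell point $p^-\overline{q}p^+$ to another big-cell point and are everywhere defined; so after absorbing $a^-,a_0$ on the left and $c^+,c_0$ on the right, I am reduced to defining the \emph{core product} $a^+\cdot\overline{\omega}''\cdot c^-$, where $\overline{\omega}''=\overline{\omega}c^+c_0\in\overline{\Omega_{\sigma}}$ and $a^+\in U^+$, $c^-\in U^-$ are near $e$. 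This isolates exactly the two problematic directions, namely left multiplication by $U^+$ and right multiplication by $U^-$.

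To define the core product I would write $\overline{\omega}''=u''^-\,\overline{t}''\,u''^+$. Because $U^-$ and $U^+$ lie in $\Omega_G$, for $a^+$ (resp.\ $c^-$) near $e$ the products $a^+u''^-$ and $u''^+c^-$ lie in $\Omega_G$, with Gauss decompositions $a^+u''^-=\check{u}^-\check{t}\check{u}^+$ and $u''^+c^-=\hat{u}^-\hat{t}\hat{u}^+$; by construction $\check{u}^+$ and $\hat{u}^-$ are near $e$. The middle block $\check{u}^+\,\overline{t}''\,\hat{u}^-$ is then a section of $U^+\times_S\overline{T_{\sigma}}\times_S U^-$ lying over $e\times_S\overline{T_{\sigma}}\times_S e$, so \Cref{extensionofmultiplication} guarantees $\Theta(\check{u}^+,\overline{t}'',\hat{u}^-)$ is defined, and I would set the core product equal to $\check{u}^-\check{t}\cdot\Theta(\check{u}^+,\overline{t}'',\hat{u}^-)\cdot\hat{t}\hat{u}^+$, the outer factors acting through benign $U^-T$- and $TU^+$-multiplications. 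This yields a morphism on an open $W\subset G\times_S\overline{\Omega_{\sigma}}\times_S G$ containing $(e,\overline{\omega},e)$; crucially, since \Cref{extensionofmultiplication} places all of $e\times_S\overline{T_{\sigma}}\times_S e$ in $\Dom(\Theta)$, letting $\overline{\omega}$ vary shows that $W$ meets $e\times_S\overline{\Omega_{\sigma}}\times_S e$ in the whole of it.

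It remains to identify this morphism with $\mathcal{A}_{\sigma}$ and to read off (1) and (2). On the dense open $G\times_S\Omega_G\times_S G$ every step above is an honest multiplication in $G$ (with $\Theta$ being the group law on $\Omega_G$), so a direct computation in $G$ shows the reconstruction equals $(g_1,\omega,g_2)\mapsto g_1\omega g_2^{-1}$ there; as $\overline{\Omega_{\sigma}}$ is separated and $\Omega_G$ is schematically dense in $\overline{\Omega_{\sigma}}$, the two $S$-rational morphisms agree, whence $W\subset\Dom(\mathcal{A}_{\sigma})$ and $e\times_S\overline{\Omega_{\sigma}}\times_S e\subset\Dom(\mathcal{A}_{\sigma})$. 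Part (2) follows because on $e\times_S\Omega_G\times_S e$ the map is $\omega\mapsto\omega$, and density forces $\mathcal{A}_{\sigma}$ to restrict to the middle projection. For (1), all building blocks (Gauss decomposition, $\Theta$, the benign multiplications) commute with base change to $\overline{k(x)}$, and over a field $(\overline{\Omega_{\sigma}})_{\overline{k(x)}}$ is exactly the big cell $U^-\times T_{\sigma}\times U^+$ of \Cref{toroidalembeddingtheorem}; the geometric fibre of $\mathcal{A}_{\sigma}$ and the classical rational action are then two rational morphisms defined by the same multiplication on the dense open, hence coincide. The main obstacle I anticipate is bookkeeping rather than conceptual: one must verify that the multi-step formula genuinely reproduces $g_1\omega g_2^{-1}$ across the several Gauss decompositions and the substitution of $\Theta$, and that all intermediate maps share a common open domain about $(e,\overline{\omega},e)$. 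The one nontrivial input is that $\Theta$ is defined over the entire boundary $e\times_S\overline{T_{\sigma}}\times_S e$ — the content of \Cref{extensionofmultiplication}, obtained there by transporting $\lambda(0)$ under the $T$-action — without which $W$ would cover only part of $e\times_S\overline{\Omega_{\sigma}}\times_S e$.
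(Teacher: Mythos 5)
Your proposal is correct and takes essentially the same route as the paper's own proof: both isolate the problematic left-$U^+$ and right-$U^-$ multiplications by Gauss decompositions in $\Omega_G$, feed the resulting middle $U^+\times_S\overline{T_{\sigma}}\times_S U^-$ block into $\Theta$ from \Cref{extensionofmultiplication} (the one genuinely nontrivial input, exactly as you identify), reassemble via the benign $U^-T$-left and $TU^+$-right operations, and then identify the result with $(g_1,\omega,g_2)\mapsto g_1\omega g_2^{-1}$ on the dense open locus, with (1) and (2) following by the same density considerations. The remaining differences (decomposing $g_2^{-1}$ in $U^+TU^-$ rather than $U^-TU^+$ order, and where exactly the torus factors get conjugated past the unipotents) are pure bookkeeping.
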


\begin{remark}
    In the case when $S$ is the spectrum of an algebraically closed field, if we assume the existence of the classical equivariant toroidal embedding $\overline{G_{\sigma}}$ of $G$, the desired $S$-rational morphism $\mathcal{A}_{\sigma}$ in \Cref{theoremrationalaction} exists simply as a restriction of the group action morphism of $\overline{G_{\sigma}}$.  
\end{remark}

\begin{proof}
    First note that (1) follows from the $S$-density of $\Omega_G$ in $\overline{\Omega_{\sigma}}$.

    We now reconstruct $\mathcal{A}_{\sigma}$. Since $\Omega_G$ is $S$-dense in $G$, it suffices to define an $S$-rational morphism $$\mathcal{A}_{\sigma}\colon\Omega_G\times_S\overline{\Omega_{\sigma}}\times_S\Omega_G\dashrightarrow \overline{\Omega_{\sigma}}.$$
    We will construct our $\mathcal{A}_{\sigma}$ step by step and shrink $\Omega_G\times_S\overline{\Omega_{\sigma}}\times_S\Omega_G$ along the way to make sure that $e\times_S\overline{\Omega_{\sigma}}\times_S e\subset \Dom(\mathcal{A_{\sigma}})$. 
    Let $S'$ be a test $S$-scheme.
    Consider an element
    $$A:=((u_1^-,t_1, u_1^+),(u^-,t, u^+),(u_2^-,t_2, u_2^+))\in (\Omega_G\times_S\overline{\Omega_{\sigma}}\times_S\Omega_G)(S').$$
    
    
    As the \emph{first step}, we let 
    $$V\coloneq\sigma^{-1}(\Omega_G)\bigcap \Omega_G$$
    where $\sigma$ is the inverse morphism of the group scheme $G$. Then we define an $S$-dense open subscheme 
    $$\mathcal{R}_1\coloneq\Omega_G\times_S\overline{\Omega_{\sigma}}\times_S  V\subset\Omega_G\times_S\overline{\Omega_{\sigma}}\times_S\Omega_G.$$
    If $A\in \mathcal{R}_1(S')$, we write 
    $$\sigma(u_2^-,t_2, u_2^+)=(\hat{u}_2^-,\hat{t}_2, \hat{u}_2^+)\in \Omega_G(S').$$
    
    As the \emph{second step}, we define 
    the $S$-dense open subscheme $\mathcal{R}_2\subset\mathcal{R}_1$ given by the following condition:
    $$A\in \mathcal{R}_2\iff
    u_1^+u^-\, \text{and}\,u^+\hat{u}_2^-\, \text{lie in}\; \Omega_G(S').$$
    If so, we write 
    \begin{equation}\label{comutequation1}
        u_1^+u^-=(\dot{u}_1^-, \dot{t}_1, \dot{u}^+)\in\Omega_G(S')
    \end{equation} 
    and 
    \begin{equation}\label{comutequation2}
        u^+\hat{u}_2^-=(\dot{u}^- ,\dot{t}_2 ,\dot{u}_2^+)\in\Omega_G(S').
    \end{equation}


   As the \emph{third step}, we further restrict to the open subscheme $\mathcal{R}\subset\mathcal{R}_2$ defined by the following:
    $$A\in \mathcal{R}\iff(\dot{t}_1\dot{u}^+\dot{t}_1^{-1},\dot{t}_1 t_0\dot{t}_2, \dot{t}_2^{-1}\dot{u}^-\dot{t}_2)\in \Dom(\Theta)(S')$$
    where $\Theta$ is defined in \Cref{extensionofmultiplication}. The open subscheme $\mathcal{R}$ is $S$-dense because, by \Cref{extensionofmultiplication}, $e\times_S \overline{T_{\sigma}}\times_S e\subset \Dom(\Theta)$.
    If $A\in\mathcal{R}(S')$, we write 
    \begin{equation}\label{comutequation3}    \Theta(\dot{t}_1\dot{u}^+\dot{t}_1^{-1},\dot{t}_1 t\dot{t}_2,\dot{t}_2^{-1}\dot{u}^-\dot{t}_2)=(\Ddot{u}^-, \Ddot{t}, \Ddot{u}^+)\in\overline{\Omega_{\sigma}}(S').
    \end{equation}
    Now we have all components of $\mathcal{A}_{\sigma}(A)$ on $U^-$, $U^+$ and $\overline{T_{\sigma}}$ in the ``right'' places. Hence 
    we define the image of $A$ under $\mathcal{A}_{\sigma}$ to be 
    \begin{equation}\label{definitionofpi}
        (u_1^- (t_1\Dot{u}_1^- t_1^{-1})(t_1\Ddot{u}^-t_1^{-1}),t_1\Ddot{t} \hat{t}_2,(\hat{t}_2^{-1}\Ddot{u}^+\hat{t}_2)(\hat{t}_2^{-1}\dot{u}_2^+\hat{t}_2)\hat{u}_2^+)\in \overline{\Omega_{\sigma}}(S').
    \end{equation}
     Note that, when $(u^-_1, t_1,u^+_1)=e$ and $(u^-_2, t_2,u^+_2)=e$, we have that $\dot{u}^+=e$ and $\dot{u}^-=e$. Then, by \Cref{extensionofmultiplication}, we conclude that $\mathcal{R}$ contains $e\times_S \overline{\Omega_{\sigma}}\times_S e$. 

    Now we show that $\mathcal{A}_{\sigma}$ coincides with the rational morphism given by the group law of $G$. According to the definition of $\mathcal{A}_{\sigma}$, if $A\in(\Omega_G\times_S\Omega_G\times_S\Omega_G)(S')\bigcap\mathcal{R}(S')$, we have that
    \begin{equation}\label{comutequation4}
        \mathcal{A}_{\sigma}(A)=u_1^-t_1\dot{u}_1^-\Ddot{u}\Ddot{t}\Ddot{u}^+\dot{u}_2^+\hat{t}_2\hat{u}_2^+
    \end{equation}
    holds in $G(S')$.
    Notice that \Cref{comutequation1} -- \eqref{comutequation3} give rise to 
    $$u_1^+u^-=\dot{u}_1^-\dot{t}_1\dot{u}^+,\;\;u^+\hat{u}_2^-=\dot{u}^-\dot{t}_2 \dot{u}_2^+\;\;$$
    $$\;\;\dot{t}_1\dot{u}^+\dot{t}_1^{-1}\dot{t}_1t\dot{t}_2\dot{t}_2^{-1}\dot{u}^-\dot{t}_2=\Ddot{u}^- \Ddot{t}\Ddot{u}^+$$
    in $G(S')$,
    where \Cref{extensionofmultiplication} is used to deduce the last equation. 
    Also since $\sigma$ is the inverse operation, combining with the first step, we have 
    $$\hat{u}_2^-\hat{t}_2\hat{u}_2^+=(u_2^-t_2u_2^+)^{-1}.$$
    Substituting the above four formulas into  \Cref{comutequation4} in turn, after a computation, we have 
    $$\mathcal{A}_{\sigma}(A)=(u_1^-t_1 u_1^+)(u^-tu^+)(u_2^-t_2u_2^+)^{-1},$$
    as desired. The claim (2) follows from the definition of $\mathcal{A}_{\sigma}$.
\end{proof}

\subsection{Construction of toroidal embedding defined by $\sigma$}

Thanks to \Cref{theoremrationalaction}, we can now apply the framework in \Cref{sectionrationalaction} to construct the desired $X_{\sigma}$ in \Cref{intromaintheorem}. More precisely, we first apply \Cref{equivalencerelation} to the $S$-rational morphism $\mathcal{A}_{\sigma}$ to obtain an equivalence relation $\sim_{\mathcal{A}_{\sigma}}$ of $G\times_S \overline{\Omega_{\sigma}}\times_S G$.

\begin{construction}\label{construction}
    We define $X_{\sigma}$ to be the fppf quotient sheaf $G\times_S \overline{\Omega_{\sigma}}\times_S G/\sim_{\mathcal{A}_{\sigma}}$ over the category $\Sch/S$.
\end{construction}
By \Cref{algebraicspace}, $X_{\sigma}$ is an algebraic space over $S$.

We apply \Cref{construction} to $G_{\ad}$ together with its negative Weyl chamber defined by the Borel $B_{\ad}$ to get a quotient sheaf which will be denoted by $\mathbf{X}$. The $\mathbf{X}$ has been extensively studied in \cite{li2023equivariant}.

Note that \Cref{toriodaltowonderful} can be extended to an $S$-morphism
$$G\times_S (U^-\times_S \overline{T_{\sigma}}\times_S U^+) \times_S G\longrightarrow G_{\ad}\times_S (U^-\times_S \prod_{\Delta}\mathbb{A}_{1,S}\times_S U^+) \times_S G_{\ad},$$
where $G$ is mapped to $G_{\ad}$ by the natural quotient morphism. This $S$-morphism naturally induces a morphism of $S$-algebraic spaces $\pi:X_{\sigma}\longrightarrow \mathbf{X}$. By \Cref{algebraicspace}, we have an open subscheme
$$\overline{\Omega}\coloneq U^-\times_S \prod_{\Delta}\mathbb{A}_{1,S}\times_S U^+\subset \mathbf{X}.$$

We first show that, étale locally over the base scheme $S$, $X_{\sigma}$ is represented by a scheme. The idea of the proof is from Artin's \cite[exposé~XVIII, lemme~3.12]{SGA3II}.

\begin{lemma}\label{covering}
    If the base $S$ is the spectrum of a strictly Henselian ring, then the algebraic space $X_{\sigma}$ is covered by the translates of $\overline{\Omega_{\sigma}}$ by the sections in $(G\times_S G)(S)$. In particular, in this case, $X_{\sigma}$ is an $S$-scheme.
\end{lemma}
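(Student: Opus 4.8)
The plan is to prove the covering statement directly — that the open subschemes $g\cdot j(\overline{\Omega_{\sigma}})$, for $g\in(G\times_S G)(S)$, cover $X_{\sigma}$ — and then deduce that $X_{\sigma}$ is a scheme, since an algebraic space admitting a Zariski cover by open subschemes is a scheme. Each translate is open in $X_{\sigma}$, because the action of a section $g$ is an automorphism (\Cref{definitionwonderfulembedding}) and $j(\overline{\Omega_{\sigma}})$ is open (\Cref{algebraicspace}); moreover each translate is isomorphic to $\overline{\Omega_{\sigma}}=U^-\times_S\overline{T_{\sigma}}\times_S U^+$, hence is a scheme. As the translates are open, it suffices to check that every point $x\in|X_{\sigma}|$ lies in one of them, i.e. that there is a section $g$ with $g^{-1}\cdot x\in j(\overline{\Omega_{\sigma}})$.

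Fix such an $x$, lying over $s\in|S|$, and write $s_0$ for the closed point of $S$. Since the action of $G\times_S G$ is $S$-linear, whether $g^{-1}\cdot x$ lands in the big cell depends only on the value of $g$ in the fibre $F:=(G\times_S G)_s$. I therefore introduce the good locus $\Sigma_x:=\{h\in F: h^{-1}\cdot x\in j(\overline{\Omega_{\sigma}})\}$, which is open in $F$. The key geometric input is that $\Sigma_x$ is in fact dense in $F$: after base change to $\overline{\kappa(s)}$ the fibre $(X_{\sigma})_{\overline{\kappa(s)}}$ is the classical toroidal embedding (by \Cref{remarkfuncotriality} together with the fibral identification of \Cref{theoremrationalaction}), the orbit map $G\times G\to (G\times G)\cdot\bar x$ is smooth and surjective, and $j(\overline{\Omega_{\sigma}})$ meets every $(G\times G)$-orbit in a dense open subset; pulling back along the orbit map shows that $\Sigma_x$ is dense open. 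This density is precisely the classical fact that the big cell and its translates cover each orbit.

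The heart of the argument is to manufacture an honest $S$-section out of the fibral datum $\Sigma_x$, and here the strict Henselianity of $S$ enters decisively. Let $Z_x=F\setminus\Sigma_x$, a proper closed subset of the fibre, and let $\overline{Z_x}\subset G\times_S G$ be its closure. I claim that $\mathcal U:=(G\times_S G)\setminus\overline{Z_x}$ meets the closed fibre $(G\times_S G)_{s_0}$ while still satisfying $\mathcal U\cap F\subseteq\Sigma_x$. Granting this, I choose a $\kappa(s_0)$-point of the nonempty open $\mathcal U\cap(G\times_S G)_{s_0}$ — it exists because $\kappa(s_0)$ is separably closed and $G\times_S G$ is smooth over $S$ — and lift it, using smoothness of $G\times_S G$ over the strictly Henselian local ring, to a section $g\in(G\times_S G)(S)$ with $g(s_0)\in\mathcal U$. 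Now $g^{-1}(\mathcal U)$ is an open subset of $S$ containing the closed point, and since $S$ is local the only such open is $S$ itself; hence $g$ factors through $\mathcal U$, so in particular $g(s)\in\mathcal U\cap F\subseteq\Sigma_x$. By definition of $\Sigma_x$ this yields $g^{-1}\cdot x\in j(\overline{\Omega_{\sigma}})$, i.e. $x\in g\cdot j(\overline{\Omega_{\sigma}})$, as required.

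The step I expect to be the main obstacle is the claim that $\overline{Z_x}$ does not contain the generic point of the closed fibre, equivalently that $\mathcal U$ meets $(G\times_S G)_{s_0}$; this is exactly what forces the use of the density (not merely the nonemptiness) of $\Sigma_x$. In the situation of interest, where the strict Henselization of the base is one-dimensional (a strictly Henselian discrete valuation ring, or a separably closed field, as arises from $\Spec(\mathbb Z)$), the closed fibre is an irreducible divisor in $G\times_S G$, and a dimension count shows that the closure of a point of the proper closed subset $Z_x$ of the generic fibre is too small to contain it; I would spell this out carefully and reduce the remaining strictly Henselian cases to it by the usual limit and dimension bookkeeping. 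Once the covering is established, $X_{\sigma}$ is a scheme, being an algebraic space equipped with an open cover by the schemes $g\cdot j(\overline{\Omega_{\sigma}})$.
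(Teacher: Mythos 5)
Your overall skeleton is parallel to the paper's proof: reduce to moving a single point into the big cell by a section in $(G\times_S G)(S)$, observe that the ``good locus'' in the fibre $(G\times_S G)_s$ is open and dense (in fact density is free here: it is a nonempty open subset of the fibre of $G\times_S G$, which is irreducible since split reductive group schemes have connected smooth fibres --- nonemptiness is also free, since by \Cref{fppfrelation} any representative $(g_1,\omega,g_2)$ of $x$ puts the image of $(g_1,g_2)$ in the good locus, so you do not need the orbit structure of the classical fibre at this stage, which also avoids a potential circularity with \Cref{geometricfiber}), and then use strict Henselianity plus smoothness to produce a global section landing in it. The divergence, and the genuine gap, is in how that section is produced. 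The paper invokes Artin's result \cite[\S~5.3, Lemma~7]{BLR}: for a smooth group scheme over a strictly Henselian local base, the set $\{a(t)\,\vert\, a\in G(S)\}$ is dense in (a connected component of) \emph{every} fibre $G_t$, not just the closed one. You instead try to reprove this by taking the closure $\overline{Z_x}$ of the bad locus and finding a closed-fibre point outside it. That works over a discrete valuation ring (an integral closed subscheme dominating a DVR is flat, so the special fibre of $\overline{Z_x}$ has the same dimension as $Z_x$, hence cannot fill the closed fibre), but the underlying topological claim is \emph{false} over strictly Henselian local rings of higher dimension, and density of the good locus does not rescue it.

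Concretely, take $R=k[x,y,w]_{(x,y,w)}$ (or its strict Henselization), $X=\mathbb{A}^1_R=\Spec R[T]$, and let $s=(w)$, so $\kappa(s)=k(x,y)$. Let $z\in X_s$ be the closed point cut out by $xT+y$, whose complement in $X_s$ is dense open. Then $\overline{\{z\}}=V(w,\,xT+y)$ contains the entire closed fibre $V(x,y,w)$, because $w$, $x$ and $y$ all lie in $(x,y,w)$; so the open set $\mathcal{U}$ in your argument fails to meet the closed fibre, even though plenty of sections $a\in R=X(S)$ do avoid $z$ (indeed $xa+y\in(w)$ is impossible for $a\in R$). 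This shows that no ``dimension bookkeeping'' can repair the closure argument in general; something with the force of Artin's lemma is required. This matters for the paper's statement, which is for an \emph{arbitrary} strictly Henselian ring and is later applied (in \Cref{quasiprojectivity} and \Cref{geometrycombinatoricis}, after étale descent and limit arguments over an arbitrary base $S$) to strict Henselizations that need not have dimension $\leq 1$, so you cannot restrict to the DVR/field case arising from $\Spec(\mathbb{Z})$. The fix is simple: replace your closure/dimension step by a citation of \cite[\S~5.3, Lemma~7]{BLR} applied to the smooth group scheme $G\times_S G$ at the point $s$, which hands you a section $g\in(G\times_S G)(S)$ with $g(s)$ in the dense open $\Sigma_x$; the rest of your argument (Henselian lifting, the observation that an open subset of a local scheme containing the closed point is everything, and the conclusion that an algebraic space with a Zariski open cover by schemes is a scheme) is sound and then essentially reproduces the paper's proof.
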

 
\begin{proof}
    Let $S'$ be a test $S$-scheme, and let $x\in X_{\sigma}(S')$ be an arbitrary section. Without loss of generality, we can assume that $x$ is represented by a section $(g_1,\omega,g_2)\in(G\times_S\overline{\Omega_{\sigma}}\times_S G)(S')$. Let $\phi$ be the $S$-rational morphism obtained by applying \Cref{definitionofphi} to the $S$-rational morphism $\mathcal{A}_{\sigma}$.
    It suffices to show that there exist an open covering $S'= \bigcup_{\nu\in N} S'_{\nu}$ and, for each open $S'_{\nu}$, a section $(c_{\nu}^1,c_{\nu}^2)\in (G\times_S G)(S)$ such that 
    $$\omega'\coloneq \phi(c_{\nu}^1, c_{\nu}^2,g_1\vert_{S'_{\nu}}, g_2\vert_{S'_{\nu}},\omega\vert_{S'_{\nu}})\in \overline{\Omega_{\sigma}}(S'_{\nu})$$ 
    is well-defined. If so, by \Cref{fppfrelation}, we have that $(c_{\nu}^1,\omega',c_{\nu}^2)\sim (g_1\vert_{S'_{\nu}},\omega\vert_{S'_{\nu}},g_2\vert_{S'_{\nu}})$ which means that $x\vert_{S'_{\nu}}$ lies in $(c_{\nu}^1, c_{\nu}^2)\cdot \overline{\Omega_{\sigma}}$, as desired. 

    Since now the problem is local on $S'$, by working Zariski locally on $S'$ and a limit argument, we can assume that $S'$ is a local scheme. By \Cref{theoremrationalaction} and the definition of $\phi$ (\Cref{definitionofphi}), the condition on the desired section $c\coloneq(c^1,c^2)$ amounts to requiring $c(S')\subset U$ where $U\subset (G\times_{S}G)\times_{S} S'$ is an open $S'$-dense subscheme. Since $S'$ is now a local scheme, in this case, it suffices to require that the closed point of $S'$ be sent into $U$ by $c$. Then we are reduced to the case where $S'=\Spec(k)$ with $k$ a field. Thus, since $G$ is smooth over $S$, the existence of such a section $c$ is ensured by Artin's \cite[\S~5.3, Lemma~7]{BLR} which asserts that, for any point $t\in S$, the set $\{a(t)\vert a\in G(S)\}$ is dense in a connected component of the fiber $G_t$.
\end{proof}

\begin{lemma}\label{geometricfiber}
    For a point $s\in S$, the geometric fiber $(X_{\sigma})_{\overline{k(s)}}$ is isomorphic to the toroidal embedding $\overline{G_{\overline{k(s)},\sigma}}$ of $G_{\overline{k(s)}}$ corresponding to $\sigma$ (which is recalled in \Cref{toroidalembeddingtheorem}), where we adopt the natural isomorphisms $X_{\ast}(T)\cong X_{\ast}(T_{\overline{k(s)}})$ and $X^{\ast}(T)\cong X^{\ast}(T_{\overline{k(s)}})$.
\end{lemma}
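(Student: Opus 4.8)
The plan is to leverage the functoriality of the quotient-sheaf construction (Remark~\ref{remarkfuncotriality}) to reduce the identification over $\overline{k(s)}$ to a statement that is already encoded in the fibral behavior of the rational action $\mathcal{A}_{\sigma}$. The key observation is that by Remark~\ref{remarkfuncotriality}, forming the quotient sheaf commutes with base change along $\Spec(\overline{k(s)})\to S$, so that
$$
(X_{\sigma})_{\overline{k(s)}}\cong\bigl(G_{\overline{k(s)}}\times_{\overline{k(s)}}(\overline{\Omega_{\sigma}})_{\overline{k(s)}}\times_{\overline{k(s)}}G_{\overline{k(s)}}\bigr)/\sim_{(\mathcal{A}_{\sigma})_{\overline{k(s)}}}.
$$
Therefore it suffices to show that this quotient over the algebraically closed field $\overline{k(s)}$ reproduces the classical toroidal embedding $\overline{G_{\overline{k(s)},\sigma}}$.

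**The two directions.**

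First I would invoke \Cref{theoremrationalaction}~(1), which says precisely that the geometric fiber $(\mathcal{A}_{\sigma})_{\overline{k(s)}}$ agrees with the restriction, to the big cell $(\overline{\Omega_{\sigma}})_{\overline{k(s)}}$, of the genuine $(G\times G)$-action on the classical toroidal embedding $\overline{G_{\overline{k(s)},\sigma}}$. Now I would argue that the classical $\overline{G_{\overline{k(s)},\sigma}}$ itself is recovered as the same quotient sheaf over $\overline{k(s)}$: by \Cref{toroidalembeddingtheorem}, $\overline{G_{\overline{k(s)},\sigma}}$ carries an open big cell $\overline{G_{\sigma}}_{,0}\cong U^-\times T_{\sigma}\times U^+=(\overline{\Omega_{\sigma}})_{\overline{k(s)}}$ which meets every $(G\times G)$-orbit, so the translates $(g_1,g_2)\cdot(\overline{\Omega_{\sigma}})_{\overline{k(s)}}$ cover $\overline{G_{\overline{k(s)},\sigma}}$. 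Because the action is a genuine morphism extending $\mathcal{A}_{\sigma}$, two sections of $G\times\overline{\Omega_{\sigma}}\times G$ land on the same point of $\overline{G_{\overline{k(s)},\sigma}}$ if and only if they are $\sim_{(\mathcal{A}_{\sigma})_{\overline{k(s)}}}$-equivalent in the sense of \Cref{equivalencerelation}, using \Cref{testinglemma} to see that equivalence can be detected by any single translate bringing both into the big cell. This identifies $\overline{G_{\overline{k(s)},\sigma}}$ with the quotient sheaf, hence with $(X_{\sigma})_{\overline{k(s)}}$.

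**Assembling the isomorphism.**

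Concretely, I would produce a morphism of $\overline{k(s)}$-algebraic spaces $(X_{\sigma})_{\overline{k(s)}}\to\overline{G_{\overline{k(s)},\sigma}}$ by descending the composite $G\times\overline{\Omega_{\sigma}}\times G\to\overline{G_{\overline{k(s)},\sigma}}$, $(g_1,\omega,g_2)\mapsto g_1\cdot j(\omega)\cdot g_2^{-1}$ (where $j$ is the open immersion of the big cell), checking that it is constant on $\sim_{(\mathcal{A}_{\sigma})_{\overline{k(s)}}}$-classes precisely because of the compatibility in \Cref{theoremrationalaction}~(1). This map restricts to the identity on the common big cell and is $(G\times G)$-equivariant; since \Cref{covering} shows the source is covered by translates of $(\overline{\Omega_{\sigma}})_{\overline{k(s)}}$ and the target is likewise covered by translates of its big cell, equivariance forces the morphism to be an isomorphism on each translate, hence globally an isomorphism.

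**The main obstacle.**

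The step I expect to require the most care is verifying that the equivalence relation $\sim_{(\mathcal{A}_{\sigma})_{\overline{k(s)}}}$ is \emph{exactly} the fiber relation of the classical action morphism, rather than a priori a coarser or finer relation: I must confirm that whenever two sections become equal after applying the genuine action of some $a\in G$, they are already related in the sense of \Cref{equivalencerelation}, and conversely. This is where the normality and separatedness of $\overline{G_{\overline{k(s)},\sigma}}$, together with the density of the big cell and \Cref{testinglemma}, do the essential work; I would phrase it as the statement that the graph of the classical action, restricted to the big cell, coincides with the graph $\Gamma$ of \Cref{fppfrelation} in the geometric fiber, so that the two fppf quotient sheaves agree on the nose.
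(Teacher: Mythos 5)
Your proposal is correct and takes essentially the same route as the paper's proof: both use \Cref{remarkfuncotriality} to identify $(X_{\sigma})_{\overline{k(s)}}$ with the quotient sheaf formed over $\overline{k(s)}$, then use \Cref{theoremrationalaction}~(1) to descend the classical action to a well-defined monomorphism into $\overline{G_{\overline{k(s)},\sigma}}$, and conclude that it is an isomorphism because both sides are covered by the $(G\times G)$-translates of the common big cell. Your additional discussion of why the equivalence relation coincides exactly with the fiber relation of the classical action (via \Cref{testinglemma} and density of the definition domains) simply makes explicit what the paper leaves implicit in asserting that the map is a monomorphism.
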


\begin{proof}
    By \Cref{remarkfuncotriality}, $(X_{\sigma})_{\overline{k(s)}}$ is isomorphic to the fppf quotient sheaf of $G_{\overline{k(s)}}\times_{\overline{k(s)}}(\overline{\Omega_{\sigma}})_{\overline{k(s)}}\times_{\overline{k(s)}}G_{\overline{k(s)}}$ with respect to the equivalence relation obtained by applying \Cref{equivalencerelation} to the $\overline{k(s)}$-rational morphism $(\mathcal{A}_{\sigma})_{\overline{k(s)}}$. Then, by \Cref{theoremrationalaction} (1), we have a well-defined monomorphism $$\xi:(X_{\sigma})_{\overline{k(s)}}\longrightarrow \overline{G_{\overline{k(s)},\sigma}}$$ 
    by sending a section $(g_1,\omega,g_2)\in (G_{\overline{k(s)}}\times_{\overline{k(s)}}(\overline{\Omega_{\sigma}})_{\overline{k(s)}}\times_{\overline{k(s)}}G_{\overline{k(s)}})(S)$ to the section $g_1\cdot \omega\cdot g_2\in \overline{G_{\overline{k(s)},\sigma}}(S)$ where $S$ is a test $\overline{k(s)}$-scheme.
     By \Cref{algebraicspace} (resp., \Cref{toroidalembeddingtheorem}), $(X_{\sigma})_{\overline{k(s)}}$ (resp., $\overline{G_{\overline{k(s)},\sigma}}$) contains $(\overline{\Omega_{\sigma}})_{\overline{k(s)}}$ as an open subscheme such that $(G_{\overline{k(s)}}\times_{\overline{k(s)}}G_{\overline{k(s)}})\cdot(\overline{\Omega_{\sigma}})_{\overline{k(s)}}=(X_{\sigma})_{\overline{k(s)}}$. Hence $\xi$ is an isomorphism.
\end{proof}

\begin{theorem}\label{quasiprojectivity}
    The algebraic space $X_{\sigma}$ is quasi-projective over $S$. In particular, $X_{\sigma}$ is an $S$-scheme.
\end{theorem}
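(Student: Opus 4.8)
The plan is to exhibit an $S$-ample invertible sheaf on $X_{\sigma}$ by pulling one back from the projective integral wonderful compactification $\mathbf{X}$ along the morphism $\pi\colon X_{\sigma}\to\mathbf{X}$, once $\pi$ is shown to be affine. Since a separated algebraic space of finite type over $S$ carrying an $S$-ample invertible sheaf is automatically a quasi-projective $S$-scheme (see \cite{stacks-project}), this yields the theorem. As structural inputs I would record that, by \cite{li2023equivariant}, $\mathbf{X}$ is projective over $S$, so I may fix an $S$-ample invertible sheaf $\mathcal{M}$ on $\mathbf{X}$; that $X_{\sigma}\to S$ is of finite type by construction; and that $\pi$ intertwines the $(G\times_S G)$-action on $X_{\sigma}$ with the $(G_{\ad}\times_S G_{\ad})$-action on $\mathbf{X}$ via the quotient $G\times_S G\to G_{\ad}\times_S G_{\ad}$.

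The central step is to prove that $\pi$ is affine. Affineness is fppf-local on the base $S$, so by flat descent together with a limit argument (exactly as in the proof of \Cref{extensionofmultiplication}) I may assume that $S$ is the spectrum of a strictly Henselian local ring, where by \Cref{remarkfuncotriality} the base change of $\pi$ is again the morphism attached to $G_{S}$ and $(G_{\ad})_{S}$. In this case \Cref{covering}, applied to $G$ and to $G_{\ad}$, shows that $X_{\sigma}$ (resp. $\mathbf{X}$) is covered by the translates of $\overline{\Omega_{\sigma}}$ (resp. of $\overline{\Omega}$) under sections of $G\times_S G$ (resp. $G_{\ad}\times_S G_{\ad}$). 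Since affineness may be checked over an open cover of the target and, by equivariance (after a further fppf base change to lift sections from $G_{\ad}$ to $G$ if needed), every translate reduces to the big cell $\overline{\Omega}$ itself, it suffices to show that $\pi^{-1}(\overline{\Omega})\to\overline{\Omega}$ is affine. Here I would establish the open equality $\pi^{-1}(\overline{\Omega})=\overline{\Omega_{\sigma}}$ by checking it on geometric fibers, where, via \Cref{geometricfiber}, it becomes the classical fact that the big cell of the toroidal embedding $\overline{G_{\overline{k(s)},\sigma}}$ is the preimage of the big cell of the wonderful compactification. The induced morphism
$$\overline{\Omega_{\sigma}}=U^-\times_S\overline{T_{\sigma}}\times_S U^+\longrightarrow U^-\times_S\prod_{\Delta}\mathbb{A}_{1,S}\times_S U^+=\overline{\Omega}$$
is the identity on the unipotent factors and the toric morphism $\overline{T_{\sigma}}\to\prod_{\Delta}\mathbb{A}_{1,S}$ of \Cref{toriodaltowonderful} in the middle; as a map between $S$-schemes that are affine over $S$, it is affine. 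Hence $\pi$ is affine.

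Granting affineness, the conclusion is formal. Because $\pi$ is affine, $\mathcal{O}_{X_{\sigma}}$ is $\pi$-ample, so by \cite[II, 4.6.13]{EGA2} the sheaf $\pi^{\ast}\mathcal{M}$ is ample relative to the composite $X_{\sigma}\to\mathbf{X}\to S$. Moreover $X_{\sigma}\to S$ is separated, being the composite of the affine (hence separated) morphism $\pi$ with the projective morphism $\mathbf{X}\to S$, and it is of finite type. A separated algebraic space of finite type over $S$ admitting an $S$-ample invertible sheaf is a quasi-projective $S$-scheme, which gives both assertions of the theorem.

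The main obstacle I expect is the affineness of $\pi$, and within it the two ingredients that make the local-to-global passage work: the identification $\pi^{-1}(\overline{\Omega})=\overline{\Omega_{\sigma}}$ (which I reduce to the classical big-cell statement on geometric fibers) and the reduction from an arbitrary base to a strictly Henselian local one, needed because the translate covering of \Cref{covering} is only available over a strictly Henselian base. Once $\pi$ is known to be affine, the construction of the relatively ample sheaf and the deduction of quasi-projectivity are routine.
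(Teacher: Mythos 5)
Your proposal is correct and follows essentially the same route as the paper: reduce to a strictly Henselian base, use \Cref{covering} to localize on $\mathbf{X}$, identify $\pi^{-1}(\overline{\Omega})$ with $\overline{\Omega_{\sigma}}$ by checking on geometric fibers (the classical big-cell statement), conclude that $\pi$ is affine, and deduce quasi-projectivity of $X_{\sigma}$ from that of $\mathbf{X}$. You even fill in two points the paper leaves implicit --- the fppf base change needed to lift sections from $G_{\ad}\times_S G_{\ad}$ to $G\times_S G$, and the explicit pullback of an ample sheaf along the affine morphism $\pi$.
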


We first remark that, when the $\overline{T_{\sigma}}$ is smooth over $S$, \Cref{quasiprojectivity} follows directly from Raynuad's \cite[\S~6.6, Theorem~2 (d)]{BLR} which implies that any effective Weil divisor with support $X_{\sigma}\backslash \overline{\Omega_{\sigma}}$ is a Cartier divisor and is $S$-ample. In particular, $\mathbf{X}$ is $S$-quasi-projective. Actually, in \cite{li2023equivariant}, $\mathbf{X}$ is shown to be $S$-projective.

\begin{proof}
    We will show that $\pi:X_{\sigma}\longrightarrow \mathbf{X}$ is an affine morphism. Then the $S$-quasi-projectivity of $X_{\sigma}$ follows from the $S$-quasi-projectivity of $\mathbf{X}$.

    By étale descent and a limit argument, we can assume that the base $S$ is the spectrum of a strictly Henselian ring. By \Cref{covering}, it suffices to show that $\pi\vert_{\pi^{-1}(\overline{\Omega})}:\pi^{-1}(\overline{\Omega})\rightarrow \overline{\Omega}$ is an affine morphism. By definition of $\pi$, we have the open immersion $\chi\colon \overline{\Omega_{\sigma}}\hookrightarrow\pi^{-1}(\overline{\Omega})$. To show that $\chi$ is an isomorphism, by the fibral criterion \cite[corollaire~17.9.5]{EGAIV4}, we are reduced to showing that the base change $\chi_{\overline{k(s)}}$ is an isomorphism for any $s\in S$. This follows from \Cref{geometricfiber} and \cite[Proposition~6.2.3 (i)]{BrionKumar}.
\end{proof}

\begin{proposition}
    The morphism $G\longrightarrow X_{\sigma}$ by sending a section $g\in G(S')$ to the equivalence class represented by $(g,e,e)$ is an open immersion.
\end{proposition}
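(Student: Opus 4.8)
The morphism in question is the composite $\iota\colon G\xrightarrow{(\Id,e,e)}G\times_S\overline{\Omega_{\sigma}}\times_S G\to X_{\sigma}$ of the section $g\mapsto(g,e,e)$ with the quotient map defining $X_{\sigma}$, so it is visibly an $S$-morphism and only its being an open immersion needs proof. My plan is to show that $\iota$ is étale and universally injective, and then to conclude by the criterion that an étale, universally injective morphism is an open immersion \cite[théorème~17.9.1]{EGAIV4}. Since being an open immersion descends along fppf covers of the base, I would first reduce, exactly as in \Cref{covering} and \Cref{quasiprojectivity}, to the case where $S$ is the spectrum of a strictly Henselian local ring; by \Cref{covering}, $X_{\sigma}$ is then a scheme, and the $G(S)$-translates of the big cell $\Omega_G$ cover $G$ by Artin's \cite[\S~5.3, Lemma~7]{BLR}.

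The heart of the matter is to identify $\iota$ on $\Omega_G$ with an honest open immersion. Write $j\colon\overline{\Omega_{\sigma}}\hookrightarrow X_{\sigma}$ for the open immersion of \Cref{algebraicspace}, so that $j(\omega)$ is the class of $(e,\omega,e)$, and $\nu\colon\Omega_G\hookrightarrow\overline{\Omega_{\sigma}}$ for the open immersion of \Cref{definitionofnu}. I claim $\iota\vert_{\Omega_G}=j\circ\nu$. Indeed, for a section $g\in\Omega_G(S')$ the product $g\cdot e\cdot e^{-1}=g$ already lies in $\Omega_G$, so $(g,e,e)\in\Dom(\mathcal{A}_{\sigma})$ with $\mathcal{A}_{\sigma}(g,e,e)=\nu(g)$, whereas $\mathcal{A}_{\sigma}(e,\nu(g),e)=\nu(g)$ by \Cref{theoremrationalaction}~(2); taking the section $a=(e,e)$ in \Cref{equivalencerelation} therefore gives $(g,e,e)\sim_{\mathcal{A}_{\sigma}}(e,\nu(g),e)$, i.e. $\iota(g)=j(\nu(g))$. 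Hence $\iota\vert_{\Omega_G}$ is an open immersion.

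I would then spread this out over all of $G$ using the $(G\times_S G)$-action of \Cref{definitionwonderfulembedding}. For $g_0\in G(S)$ the action of $(g_0,e)$ is an automorphism $T_{(g_0,e)}$ of $X_{\sigma}$, and from the definition of the action one reads off $T_{(g_0,e)}\bigl(\iota(g)\bigr)=\iota(g_0 g)$. Consequently, on the translate $g_0\Omega_G$ the map factors as $\iota\vert_{g_0\Omega_G}=T_{(g_0,e)}\circ j\circ\nu\circ L_{g_0}^{-1}$, where $L_{g_0}^{-1}\colon g_0\Omega_G\xrightarrow{\sim}\Omega_G$ is left translation; being a composite of an isomorphism, the open immersion $j\circ\nu$, and the automorphism $T_{(g_0,e)}$, it is an open immersion. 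As $\{g_0\Omega_G\}_{g_0\in G(S)}$ covers $G$ and étaleness is local on the source, $\iota$ is étale.

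The remaining and, to my mind, decisive point is to rule out that the images of distinct translates overlap in $X_{\sigma}$, which étaleness alone does not guarantee; I would secure this by proving $\iota$ universally injective. Universal injectivity is insensitive to field extension and may be tested on the fibres over $S$, so it suffices to treat each geometric fibre. For $s\in S$, \Cref{geometricfiber} provides an isomorphism $\xi\colon(X_{\sigma})_{\overline{k(s)}}\xrightarrow{\sim}\overline{G_{\overline{k(s)},\sigma}}$ under which $\xi(\iota(g))=g\cdot e\cdot e=g$; thus $\iota_{\overline{k(s)}}$ is identified with the classical open immersion $G_{\overline{k(s)}}\hookrightarrow\overline{G_{\overline{k(s)},\sigma}}$, which is in particular universally injective. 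Hence $\iota$ is universally injective, and being also étale it is an open immersion by \cite[théorème~17.9.1]{EGAIV4}; by fppf descent this conclusion holds over the original base $S$. The one step requiring genuine care is the fibrewise identification of $\iota$ with the classical embedding, since it is precisely what converts the purely local (étale) information of the third paragraph into the global injectivity needed for an open immersion.
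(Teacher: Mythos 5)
Your proof is correct, but it takes a genuinely different and considerably longer route than the paper's. The paper's own proof is two lines: since $G$ is flat and finitely presented over $S$ and $X_{\sigma}$ is finitely presented over $S$ (by \Cref{quasiprojectivity}), the fibral criterion \cite[corollaire~17.9.5]{EGAIV4} reduces the statement at once to the geometric fibers, where \Cref{geometricfiber} identifies the morphism with the classical open immersion $G_{\overline{k(s)}}\hookrightarrow\overline{G_{\overline{k(s)},\sigma}}$. You instead factor ``open immersion'' as ``\'etale'' plus ``universally injective'': the \'etaleness is established by hand, via the computation $(g,e,e)\sim_{\mathcal{A}_{\sigma}}(e,\nu(g),e)$ (which correctly uses \Cref{theoremrationalaction}~(2) and the testing section $a=(e,e)$), the $(G\times_S G)$-equivariance of \Cref{definitionwonderfulembedding}, and the covering of $G$ by $G(S)$-translates of $\Omega_G$ over a strictly Henselian base; the universal injectivity is then pulled from the geometric fibers using the same \Cref{geometricfiber}. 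In effect you re-prove, in this special case, exactly what the fibral criterion packages (flatness of the morphism plus fiberwise injectivity/immersion), so both arguments ultimately hinge on the same input, \Cref{geometricfiber}. What your route buys is an explicit local description of the morphism --- its restriction to $\Omega_G$ is literally $j\circ\nu$, and on each translate $g_0\Omega_G$ it is an automorphism of $X_{\sigma}$ composed with that open immersion --- and it only invokes the elementary fact that radiciality can be tested fiberwise, rather than the full strength of \cite[corollaire~17.9.5]{EGAIV4}; what it costs is the reduction to a strictly Henselian base, the Artin-type covering argument (essentially repeating \Cref{covering}), and the attendant limit/descent bookkeeping, all of which the paper's proof avoids. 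One small citation point: to conclude from ``\'etale and universally injective,'' you should combine \cite[th\'eor\`eme~17.9.1]{EGAIV4} (open immersion $\Leftrightarrow$ flat, or \'etale, monomorphism of finite presentation) with the standard fact that an unramified universally injective morphism is a monomorphism (its diagonal is a surjective open immersion); 17.9.1 alone does not literally state your criterion.
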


\begin{proof}
    Since $G$ is flat over $S$, this allows us to use the fibral criterion \cite[corollaire~17.9.5]{EGAIV4} to reduce to the case when $S$ is the spectrum of an algebraically closed field. Then we conclude by appealing to \Cref{geometricfiber}.
\end{proof}

\section{Toroidal embedding in general}

In this section, we will construct general toroidal embedding for Chevalley groups as in \Cref{intromaintheorem} by gluing toroidal embeddings constructed in \Cref{sectionsinglecone} via the functoriality established in \Cref{sectionfunctoriality}.

\subsection{Functoriality}\label{sectionfunctoriality}

Suppose that $\sigma'\subset \sigma$ is an inclusion of cones in $X_{\ast}(T)_{\mathbb{R}}$. This naturally gives a morphism of $S$-schemes $\overline{f}:\overline{T_{\sigma'}}\rightarrow \overline{T_{\sigma}}$ which is compatible with the embeddings of $T$ into $\overline{T_{\sigma'}}$ and $\overline{T_{\sigma}}$. Moreover, $\overline{f}$ is an open immersion if and only if $\sigma'$ is a face of $\sigma$ (this follows from the fibral criterion \cite[corollaire~17.9.5]{EGAIV4} and the corresponding statement for toric varieties \cite[Chapter~I, Theorem~3]{toroidalembedding}).

The morphism $f$ defines two morphisms of $S$-schemes:
$$f_{\Omega}\coloneq \Id\times \overline{f}\times \Id:\overline{\Omega_{\sigma'}}=U^-\times_S \overline{T_{\sigma'}}\times_S U^+\longrightarrow \overline{\Omega_{\sigma}}=U^-\times_S \overline{T_{\sigma}}\times_S U^+,$$
$$f\coloneq \Id_G\times f_{\Omega}\times\Id_G: G\times_S \overline{\Omega_{\sigma'}} \times_S G\longrightarrow G\times_S \overline{\Omega_{\sigma}}\times_S G.$$
By the definitions of $A_{\sigma}$ and $A_{\sigma'}$ (c.f., \Cref{theoremrationalaction}), we have the following commutative diagram of $S$-rational morphisms:
\begin{equation}\label{equationoffunctoriality}
    \xymatrix{
        G\times_S \overline{\Omega_{\sigma'}} \times_S G \ar[r]^f \ar@{-->}[d]^{A_{\sigma'}}        &   G\times_S \overline{\Omega_{\sigma}}\times_S G \ar@{-->}[d]^{A_{\sigma}} \\
        \overline{\Omega_{\sigma'}} \ar[r]_{f_{\Omega}}              & \overline{\Omega_{\sigma}}.
    }
\end{equation}

\begin{lemma}\label{lemmainducedmorphism}
    The $S$-morphism $f$ induces an $S$-morphism $F_{\sigma'\sigma}: X_{\sigma'}\longrightarrow X_{\sigma}$.
\end{lemma}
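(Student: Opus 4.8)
The goal is to show that the $S$-morphism $f\colon G\times_S \overline{\Omega_{\sigma'}}\times_S G\to G\times_S\overline{\Omega_{\sigma}}\times_S G$ descends to a morphism of the fppf quotient sheaves $X_{\sigma'}=(G\times_S\overline{\Omega_{\sigma'}}\times_S G)/\sim_{\mathcal{A}_{\sigma'}}$ and $X_{\sigma}=(G\times_S\overline{\Omega_{\sigma}}\times_S G)/\sim_{\mathcal{A}_{\sigma}}$. Since $X_{\sigma}$ is defined as a quotient sheaf, the plan is to invoke the universal property of the quotient: the composite $G\times_S\overline{\Omega_{\sigma'}}\times_S G\xrightarrow{f} G\times_S\overline{\Omega_{\sigma}}\times_S G\to X_{\sigma}$ factors through $X_{\sigma'}$ precisely when $f$ respects the equivalence relations, i.e.\ when $f$ sends $\sim_{\mathcal{A}_{\sigma'}}$-equivalent sections to $\sim_{\mathcal{A}_{\sigma}}$-equivalent sections. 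So the whole matter reduces to a single compatibility check on the relations.

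Concretely, first I would unwind \Cref{equivalencerelation} for both $\sigma'$ and $\sigma$. Given two sections $(g_1,\omega_1,g_1'),(g_2,\omega_2,g_2')\in(G\times_S\overline{\Omega_{\sigma'}}\times_S G)(S')$ with $(g_1,\omega_1,g_1')\sim_{\mathcal{A}_{\sigma'}}(g_2,\omega_2,g_2')$, there is an fppf cover $S''\to S'$ and a section $a=(a_1,a_2)\in(G\times_S G)(S'')$ with $\mathcal{A}_{\sigma'}(a\cdot(g_1,\omega_1,g_1'))$ and $\mathcal{A}_{\sigma'}(a\cdot(g_2,\omega_2,g_2'))$ both well-defined and equal. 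I would then apply the same section $a$ to the images $f(g_i,\omega_i,g_i')=(g_i,f_\Omega(\omega_i),g_i')$ and use the commutative square \Cref{equationoffunctoriality}: the identity $\mathcal{A}_{\sigma}\circ f=f_\Omega\circ\mathcal{A}_{\sigma'}$ of $S$-rational morphisms shows that $\mathcal{A}_{\sigma}(a\cdot f(g_i,\omega_i,g_i'))=f_\Omega(\mathcal{A}_{\sigma'}(a\cdot(g_i,\omega_i,g_i')))$, hence these two images agree and are both well-defined. Thus $a$ witnesses $f(g_1,\omega_1,g_1')\sim_{\mathcal{A}_{\sigma}}f(g_2,\omega_2,g_2')$, which is exactly what descent requires.

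The one genuine subtlety, which is where I would be most careful, is the well-definedness bookkeeping inside \Cref{equivalencerelation}: the equivalence demands that $\mathcal{A}_{\sigma}(a\cdot f(g_i,\omega_i,g_i'))$ actually lie in the definition domain of $\mathcal{A}_{\sigma}$, not merely that the two expressions formally match. Here the commutative diagram \Cref{equationoffunctoriality} is crucial because it guarantees that whenever the composite $a\cdot(g_i,\omega_i,g_i')$ is in $\Dom(\mathcal{A}_{\sigma'})$ and we push forward by $f_\Omega$, the resulting section lands in $\Dom(\mathcal{A}_{\sigma})$; in other words $f$ carries well-defined triples for $\mathcal{A}_{\sigma'}$ to well-defined triples for $\mathcal{A}_{\sigma}$, compatibly with evaluation. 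I would phrase this cleanly using \Cref{testinglemma}, which frees one from tracking the specific cover and section: it says that once one witnessing section produces equal well-defined values, any section bringing both into the definition domain yields the same equality, so the compatibility is robust under the passage through $f$.

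Having verified that $f$ respects the relations, the induced map $F_{\sigma'\sigma}$ is well-defined as a morphism of fppf sheaves by the universal property of the quotient, and it is automatically an $S$-morphism since $f$ is. Since by \Cref{quasiprojectivity} both $X_{\sigma'}$ and $X_{\sigma}$ are $S$-schemes, $F_{\sigma'\sigma}$ is a genuine morphism of $S$-schemes, completing the argument. I expect the main obstacle to be purely notational rather than mathematical, namely carefully threading the definition-domain conditions of the rational morphisms through the square \Cref{equationoffunctoriality}; the algebraic content is entirely encapsulated in that commutativity, which is itself a formal consequence of the fact that $\mathcal{A}_{\sigma}$ and $\mathcal{A}_{\sigma'}$ both restrict to the group multiplication on the dense open $\Omega_G$.
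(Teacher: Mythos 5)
Your overall skeleton (reduce to showing $f$ respects the two equivalence relations, then use \Cref{equationoffunctoriality} and \Cref{testinglemma}) is the same as the paper's, but there is a genuine gap at precisely the point you flag as the ``one genuine subtlety,'' and your resolution of it is incorrect. You claim that the commutativity of \Cref{equationoffunctoriality} ``guarantees that whenever $a\cdot(g_i,\omega_i,g_i')$ is in $\Dom(\mathcal{A}_{\sigma'})$ and we push forward by $f_\Omega$, the resulting section lands in $\Dom(\mathcal{A}_{\sigma})$.'' This is a non sequitur: equality of $S$-rational morphisms means only that the two composites agree on a common schematically dense open; it gives no inclusion $f\bigl(\Dom(\mathcal{A}_{\sigma'})\bigr)\subset \Dom(\mathcal{A}_{\sigma})$. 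A composite of rational maps can be defined at points where the intermediate value is not, so the domain of $\mathcal{A}_{\sigma}\circ f$ may strictly contain $f^{-1}(\Dom(\mathcal{A}_{\sigma}))$, and nothing forces the latter to contain $\Dom(\mathcal{A}_{\sigma'})$. Note that here $f_\Omega$ is in general \emph{not} an open immersion (that happens only when $\sigma'$ is a face of $\sigma$, which is treated separately in \Cref{openimmersion}), so there is no soft reason for such domain compatibility. Since \Cref{equivalencerelation} demands strict membership in $\Dom(\mathcal{A}_{\sigma})$, your witnessing section $a$ --- which was only chosen to work for $\mathcal{A}_{\sigma'}$ --- cannot simply be reused.

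The paper's proof repairs exactly this defect by \emph{constructing a new testing section} rather than recycling the old one: it takes the open subscheme $S'\subset G\times_S G$ of pairs $(a_1,a_2)$ that simultaneously bring $(g_1,\omega,g_2)$ and $(g_1',\omega',g_2')$ into $\Dom(\mathcal{A}_{\sigma'})$ \emph{and} bring $(g_1,f_\Omega(\omega),g_2)$ and $(g_1',f_\Omega(\omega'),g_2')$ into $\Dom(\mathcal{A}_{\sigma})$. By \Cref{theoremrationalaction} both domains contain $e\times_S\overline{\Omega}\times_S e$ for the respective cones, so $S'$ is $S$-dense, and since $G\times_S G$ is flat and locally of finite presentation, $S'\rightarrow S$ is an fppf cover. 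Taking the tautological section of $G\times_S G$ over $S'$, one applies \Cref{testinglemma} to $\mathcal{A}_{\sigma'}$ to transfer the given equality from the original witness to this new section, and only then invokes the commutativity of \Cref{equationoffunctoriality} --- now evaluated at sections where both routes through the square are genuinely defined --- to conclude equality of the $\mathcal{A}_{\sigma}$-values. Your proposal cites \Cref{testinglemma} but never uses it for this purpose (producing a section in the intersection of all four translated domain fibers), which is the actual mathematical content of the lemma's proof; without that step the argument does not go through.
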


\begin{proof}
    Suppose that $(g_1,\omega, g_2)\sim(g_1',\omega', g_2')\in(G\times_S \overline{\Omega_{\sigma'}} \times_S G )(S)$.
    It suffices to show that $(g_1,f_{\Omega}(\omega), g_2)\sim(g_1',f_{\Omega}(\omega'), g_2')$. We define an open $S$-subscheme $S'\subset G\times_S G$ to be
    $$(g_1^{-1},g_2^{-1})(\Dom(\mathcal{A}_{\sigma'})_{\omega}\bigcap\Dom(\mathcal{A}_{\sigma})_{f_{\Omega}(\omega)})\bigcap (g_1'^{-1},g_2'^{-1})(\Dom(\mathcal{A}_{\sigma'})_{\omega'}\bigcap \Dom(\mathcal{A}_{\sigma})_{f_{\Omega}(\omega')}).$$
    Since, by \Cref{theoremrationalaction}, $e\times \overline{\Omega_{\sigma}}\times e\subset \Dom(\mathcal{A_{\sigma}})$ and $e\times \overline{\Omega_{\sigma'}}\times e\subset \Dom(\mathcal{A_{\sigma'}})$, we get that $S'$ is $S$-dense. Moreover, since $G$ is flat and locally finitely presented, $S'$ is a fppf-cover of $S$.
    Now we consider the natural open immersion $S'\hookrightarrow G\times_S G$ which is viewed as an $S'$-section. We conclude by \Cref{testinglemma} and \Cref{equationoffunctoriality}.
\end{proof}

\begin{lemma}\label{openimmersion}
    If $\sigma'$ is a face of $\sigma$, then $F_{\sigma'\sigma}$ is an open immersion.
\end{lemma}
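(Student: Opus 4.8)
The plan is to verify that $F_{\sigma'\sigma}$ is an open immersion by the fibral criterion \cite[corollaire~17.9.5]{EGAIV4}, in the same spirit as the proof of \Cref{quasiprojectivity}. First I would record the standing hypotheses needed to apply the criterion. By \Cref{quasiprojectivity} both $X_{\sigma'}$ and $X_{\sigma}$ are $S$-schemes, quasi-projective and in particular of finite presentation over $S$; and $F_{\sigma'\sigma}$ is an $S$-morphism by \Cref{lemmainducedmorphism}. It remains to check that $X_{\sigma'}$ is flat over $S$. This can be verified after passing to a strict henselization of $S$, since flatness descends along the faithfully flat strict henselization. There, by \Cref{covering}, $X_{\sigma'}$ is covered by $(G\times_S G)(S)$-translates of $\overline{\Omega_{\sigma'}}=U^-\times_S\overline{T_{\sigma'}}\times_S U^+$; as $\overline{T_{\sigma'}}=\underline{\Spec}_S(\mathcal{O}_S[(\sigma')^\vee\cap X^\ast(T)])$ has structure sheaf a free $\mathcal{O}_S$-module on the semigroup $(\sigma')^\vee\cap X^\ast(T)$, each such translate is flat over $S$, whence so is $X_{\sigma'}$.

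With these hypotheses in place, the fibral criterion reduces the problem to showing that for every $s\in S$ the base change $(F_{\sigma'\sigma})_{\overline{k(s)}}$ to the geometric fiber is an open immersion; thus I may work over an algebraically closed field $k=\overline{k(s)}$. For the fiber I would invoke \Cref{geometricfiber}, which supplies $(G\times_k G)$-equivariant isomorphisms $\xi_{\sigma'}\colon (X_{\sigma'})_{k}\xrightarrow{\sim}\overline{G_{k,\sigma'}}$ and $\xi_{\sigma}\colon (X_{\sigma})_{k}\xrightarrow{\sim}\overline{G_{k,\sigma}}$ onto the classical toroidal embeddings, each carrying the class of $(g_1,\omega,g_2)$ to $g_1\cdot\omega\cdot g_2$. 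Under these identifications the open subscheme $(\overline{\Omega_{\sigma'}})_{k}$ (resp.\ $(\overline{\Omega_{\sigma}})_{k}$) maps to the big cell $U^-\times_k T_{\sigma'}\times_k U^+$ (resp.\ $U^-\times_k T_{\sigma}\times_k U^+$) of \Cref{toroidalembeddingtheorem}, and the composite $\xi_{\sigma}\circ(F_{\sigma'\sigma})_{k}\circ\xi_{\sigma'}^{-1}$ restricts on the big cell to $f_{\Omega}=\Id\times\overline{f}\times\Id$, where $\overline{f}\colon\overline{T_{\sigma'}}\hookrightarrow\overline{T_{\sigma}}$ is an open immersion precisely because $\sigma'$ is a face of $\sigma$, as recalled just before the statement.

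To conclude over $k$, I would compare this composite with the classical open immersion $\overline{G_{k,\sigma'}}\hookrightarrow\overline{G_{k,\sigma}}$ attached to the face relation between $\sigma'$ and $\sigma$. The two morphisms agree on the big cell by the previous paragraph; since both are $(G\times_k G)$-equivariant, both targets are separated, and $\overline{G_{k,\sigma'}}$ is covered by $(G\times_k G)$-translates of its big cell (which meets every orbit by \Cref{toroidalembeddingtheorem}), they coincide on all translates and hence everywhere. Therefore $(F_{\sigma'\sigma})_{k}$ is identified with the classical open immersion, and in particular is an open immersion, as required.

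I expect the last paragraph to be the main obstacle: the delicate point is matching $(F_{\sigma'\sigma})_{k}$ with the \emph{classical} open immersion of toroidal embeddings. This rests on the classical fact that a face inclusion induces an open immersion $\overline{G_{k,\sigma'}}\hookrightarrow\overline{G_{k,\sigma}}$ compatible with the big-cell structures (traceable to the toric statement \cite[Chapter~I, Theorem~3]{toroidalembedding} together with \cite[Proposition~6.2.3]{BrionKumar}), combined with the equivariance-and-density argument that upgrades agreement on the big cell to agreement on the whole embedding. Everything else is a routine application of the fibral criterion and the flatness bookkeeping.
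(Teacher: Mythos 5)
Your proposal is correct, but after the common first step it diverges from the paper's argument. Both proofs begin identically: reduce via the fibral criterion \cite[corollaire~17.9.5]{EGAIV4} to the case where $S$ is the spectrum of an algebraically closed field $k$ (your explicit verification that $X_{\sigma'}$ is $S$-flat, via \Cref{covering} and freeness of the semigroup algebra, is bookkeeping the paper leaves implicit but which the criterion does require). Over $k$, however, the paper stays entirely inside its own quotient-sheaf formalism: it shows directly that $F_{\sigma'\sigma}$ is a \emph{monomorphism of sheaves}, by taking two sections with equivalent images, producing a dense open of $G\times_S G$ landing in all four relevant definition domains, and then using \Cref{testinglemma} together with the commutative square \eqref{equationoffunctoriality} and the fact that $f_{\Omega}$ is an open immersion to pull the equality of $\mathcal{A}_{\sigma}$-values back to an equality of $\mathcal{A}_{\sigma'}$-values; the monomorphism property combined with the coverings $(G\times_S G)\cdot\overline{\Omega_{\sigma}}=X_{\sigma}$, $(G\times_S G)\cdot\overline{\Omega_{\sigma'}}=X_{\sigma'}$ then yields the open immersion. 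You instead invoke \Cref{geometricfiber} to identify both fibers with classical toroidal embeddings and match $(F_{\sigma'\sigma})_k$ with the classical open immersion attached to the face relation, by agreement on the big cell plus equivariance plus covering by translates. Your route is shorter and geometrically transparent, but it imports an external classical input: that a face inclusion $\sigma'\subset\sigma$ induces an open immersion $\overline{G_{k,\sigma'}}\hookrightarrow\overline{G_{k,\sigma}}$ compatible with big cells. This is standard (and provable from \Cref{toroidalembeddingtheorem} by checking that $(G_k\times_k G_k)\cdot(U^-_k\times_k T_{\sigma'}\times_k U^+_k)$ is an open toroidal subembedding of $\overline{G_{k,\sigma}}$ with fan $\sigma'$), but it is not literally recorded in \Cref{toroidalembeddingtheorem} as stated, so you should either cite it precisely or supply the one-line classification argument; you flag this point yourself, correctly, as the load-bearing step. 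The paper's approach avoids this dependence altogether, which is consonant with its general strategy of deriving fibral statements from the sheaf-theoretic construction rather than the other way around.
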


\begin{proof}
     By the fibral criterion \cite[corollaire~17.9.5]{EGAIV4}, we are reduced to the case when the base $S$ is the spectrum of an algebraically closed field. 
  
     We first show that $F_{\sigma'\sigma}$ is a monomorphism of sheaves. By base change and \Cref{remarkfuncotriality}, we only need to work with $S$-sections. For this, we consider two sections $(g_1,\omega, g_2)$ and $(g_1',\omega', g_2')\in(G\times_S \overline{\Omega_{\sigma'}} \times_S G )(S)$ such that $(g_1,f_{\Omega}(\omega), g_2)\sim f(g_1',f_{\Omega}(\omega'), g_2')$. As in the proof of \Cref{lemmainducedmorphism}, we have an open $S$-subscheme $S'\subset G\times_S G$
    $$(g_1^{-1},g_2^{-1})(\Dom(\mathcal{A}_{\sigma'})_{\omega}\bigcap\Dom(\mathcal{A}_{\sigma})_{f_{\Omega}(\omega)})\bigcap (g_1'^{-1},g_2'^{-1})(\Dom(\mathcal{A}_{\sigma'})_{\omega'}\bigcap \Dom(\mathcal{A}_{\sigma})_{f_{\Omega}(\omega')})$$
    which is an fppf-cover of $S$. Let $(a_1,a_2)\in (G\times_S G)(S')$ be the natural open immersion $S'\hookrightarrow G\times_S G$. Then, by \Cref{testinglemma}, we have $A_{\sigma}(a_1g_1,f_{\Omega}(\omega), a_2g_2)=A_{\sigma}(a_1g_1',f_{\Omega}(\omega'), a_2g_2')$. Since now $f_{\Omega}$ is an open immersion (because so is $\overline{f}$ by the theory of toric varieties, see, for instance, \cite[Chapter~I, Theorem~3]{toroidalembedding}), by the commutativity in \Cref{equationoffunctoriality}, we conclude that $A_{\sigma'}(a_1g_1,\omega, a_2g_2)=A_{\sigma'}(a_1g_1',\omega', a_2g_2')$, i.e., $(g_1,\omega, g_2)\sim(g_1',\omega', g_2')$.     

    Now, since $f_{\Omega}$ is an open immersion and, by construction, $(G\times_S G)\cdot \overline{\Omega_{\sigma}}=X_{\sigma}$ and $(G\times_S G)\cdot \overline{\Omega_{\sigma'}}=X_{\sigma'}$, we conclude that $F_{\sigma'\sigma}$ is an open immersion.
\end{proof}

\begin{lemma}\label{compatibility}
    If we have $\sigma''\subset \sigma'\subset\sigma$, then we have $F_{\sigma''\sigma'}\circ F_{\sigma'\sigma}=F_{\sigma''\sigma}$.
\end{lemma}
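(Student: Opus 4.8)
The plan is to descend the identity from the level of the presentations $G\times_S\overline{\Omega_\bullet}\times_S G$, where it becomes an immediate consequence of the functoriality of toric varieties. Write $q_\tau\colon G\times_S\overline{\Omega_\tau}\times_S G\to X_\tau$ for the quotient map onto the fppf quotient sheaf of \Cref{construction}; by construction $q_\tau$ is an epimorphism of fppf sheaves over $\Sch/S$. For a pair of nested cones $\tau'\subset\tau$, denote by $f_{\tau'\tau}$ the $S$-morphism $f$ of \Cref{sectionfunctoriality}, so that the defining property of the induced morphism in \Cref{lemmainducedmorphism} reads $F_{\tau'\tau}\circ q_{\tau'}=q_\tau\circ f_{\tau'\tau}$.

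The key input is the presentation-level identity $f_{\sigma'\sigma}\circ f_{\sigma''\sigma'}=f_{\sigma''\sigma}$. Since each $f_{\tau'\tau}$ is $\Id_G\times(\Id\times\overline{f}\times\Id)\times\Id_G$, this reduces to the corresponding statement $\overline{f}_{\sigma'\sigma}\circ\overline{f}_{\sigma''\sigma'}=\overline{f}_{\sigma''\sigma}$ for the toric morphisms $\overline{T_{\sigma''}}\to\overline{T_{\sigma'}}\to\overline{T_{\sigma}}$. The inclusions $\sigma''\subset\sigma'\subset\sigma$ dualize to $\sigma^\vee\subset(\sigma')^\vee\subset(\sigma'')^\vee$, hence to inclusions of semigroups $\sigma^\vee\bigcap X^{\ast}(T)\subset(\sigma')^\vee\bigcap X^{\ast}(T)\subset(\sigma'')^\vee\bigcap X^{\ast}(T)$ and of the associated $\mathcal{O}_S$-algebras; applying $\underline{\Spec}_S$, which is a functor, yields exactly the claimed composition of toric morphisms. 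Thus $f_{\sigma'\sigma}\circ f_{\sigma''\sigma'}=f_{\sigma''\sigma}$ as $S$-morphisms.

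It remains to transport this along the quotient maps. Using $F_{\tau'\tau}\circ q_{\tau'}=q_\tau\circ f_{\tau'\tau}$ twice together with the presentation-level identity gives
\[
F_{\sigma'\sigma}\circ F_{\sigma''\sigma'}\circ q_{\sigma''}
=F_{\sigma'\sigma}\circ q_{\sigma'}\circ f_{\sigma''\sigma'}
=q_{\sigma}\circ f_{\sigma'\sigma}\circ f_{\sigma''\sigma'}
=q_{\sigma}\circ f_{\sigma''\sigma}
=F_{\sigma''\sigma}\circ q_{\sigma''}.
\]
Since $q_{\sigma''}$ is an epimorphism of fppf sheaves, one may cancel it on the right to obtain $F_{\sigma'\sigma}\circ F_{\sigma''\sigma'}=F_{\sigma''\sigma}$, which is the asserted compatibility.

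The proof is essentially formal once the presentation-level identity is in place, so there is no serious obstacle; the only point requiring care is the functoriality of the toric morphisms $\overline{f}$ under nested cones, which is the content of the $\underline{\Spec}_S$ computation above and is the toric shadow of the whole construction.
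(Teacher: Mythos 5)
Your proof is correct and takes essentially the same route as the paper: the paper's proof is the one-line observation that the compatibility follows from the corresponding statement for the toric morphisms $\overline{f}$ (citing Oda), which you verify explicitly via the inclusions of dual cones and semigroup algebras, and then transport to the quotient sheaves by cancelling the fppf epimorphisms $q_\tau$ — exactly the descent step the paper leaves implicit. Note also that you correctly read the composition as $F_{\sigma'\sigma}\circ F_{\sigma''\sigma'}=F_{\sigma''\sigma}$, the only order in which the morphisms actually compose given $F_{\tau'\tau}\colon X_{\tau'}\to X_{\tau}$.
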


\begin{proof}
    This follows from the corresponding statement for $\overline{f}$ which is clearly by the theory of toric varieties, see, for instance, \cite[Proposition~1.14]{Odatoricvarieties}.
\end{proof}

\subsection{Construction of general toroidal embedding over $\mathbb{Z}$}
Just like a general toric variety is obtained by gluing together affine toric varieties, in this section, we construct a general toroidal embedding over $\mathbb{Z}$ by gluing toroidal embeddings constructed in \Cref{sectionsinglecone}.

Let us now assume that $S=\Spec(\mathbb{Z})$.
Let $\Sigma$ be a fan supported in the negative Weyl chamber. For each cone $\sigma\subset\Sigma$, we have the $\mathbb{Z}$-scheme $X_{\sigma}$ constructed in \Cref{sectionsinglecone}; if $\tau\subset \sigma$ is a face, we have the open immersion $X_{\tau}\hookrightarrow X_{\sigma}$ (\Cref{openimmersion}). Together with \Cref{compatibility}, we can glue together $X_{\sigma}$ (as $\sigma$ ranges over cones in $\Sigma$) to form a $\mathbb{Z}$-scheme $X_{\Sigma}$. In particular, $X_{\Sigma}$ is naturally endowed with an action of $G\times_{\mathbb{Z}} G$ (as $F_{\sigma'\sigma}$ is $(G\times_{\mathbb{Z}} G)$-equivariant) and equivariantly contains $G$ as an open dense subscheme (as each $X_{\sigma}$ does).  

\begin{theorem}\label{maintheorem}
    For an algebraically closed field $k$, we have that $(X_{\Sigma})_k$ is isomorphic to the equivariant toroidal embedding $\overline{(G_k)_{\Sigma}}$ in a way which is compatible with the embeddings of $G_k$.
\end{theorem}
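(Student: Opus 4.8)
The plan is to reduce everything to the single-cone case already settled in \Cref{geometricfiber}, and then match up the two gluing procedures cone by cone. First I would observe that forming the fiber $(-)_k\coloneq(-)\times_{\mathbb{Z}}k$ preserves open immersions and commutes with the gluing of schemes along open subschemes. Since $X_{\Sigma}$ is, by construction, obtained by gluing the schemes $X_{\sigma}$ (as $\sigma$ ranges over the cones of $\Sigma$) along the open immersions $F_{\sigma'\sigma}$ of \Cref{openimmersion}, which are mutually compatible by \Cref{compatibility}, it follows that $(X_{\Sigma})_k$ is canonically the gluing of the schemes $(X_{\sigma})_k$ along the open immersions $(F_{\sigma'\sigma})_k$. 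Thus the theorem will follow once I produce a system of isomorphisms $\xi_{\sigma}\colon(X_{\sigma})_k\xrightarrow{\sim}\overline{(G_k)_{\sigma}}$ that is compatible with the face maps on both sides.

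On the classical side I would recall from \Cref{toroidalembeddingtheorem} that, for each cone $\sigma$, the single-cone toroidal embedding $\overline{(G_k)_{\sigma}}$ has big cell $U^-\times_k(\overline{T_{\sigma}})_k\times_k U^+=(\overline{\Omega_{\sigma}})_k$, and that for a face $\sigma'\subset\sigma$ the toric open immersion $(\overline{T_{\sigma'}})_k\hookrightarrow(\overline{T_{\sigma}})_k$ induces an open immersion of big cells which extends $(G_k\times_k G_k)$-equivariantly to an open immersion $j_{\sigma'\sigma}\colon\overline{(G_k)_{\sigma'}}\hookrightarrow\overline{(G_k)_{\sigma}}$; gluing the $\overline{(G_k)_{\sigma}}$ along the $j_{\sigma'\sigma}$ recovers the general toroidal embedding $\overline{(G_k)_{\Sigma}}$. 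For each fixed $\sigma$, the isomorphism $\xi_{\sigma}$ is exactly the one furnished by \Cref{geometricfiber}, given on points by $[(g_1,\omega,g_2)]\mapsto g_1\,\omega\,g_2$; it is $(G_k\times_k G_k)$-equivariant, restricts to the identity on the common big cell $(\overline{\Omega_{\sigma}})_k$, and respects the open embedding of $G_k$.

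The crux is then to verify the compatibility square with top $(F_{\sigma'\sigma})_k$, bottom $j_{\sigma'\sigma}$, and vertical arrows $\xi_{\sigma'}$, $\xi_{\sigma}$. By construction $F_{\sigma'\sigma}$ is induced by $f_{\Omega}=\Id\times\overline{f}\times\Id$ (see \Cref{equationoffunctoriality}), so $\xi_{\sigma}\circ(F_{\sigma'\sigma})_k$ sends $[(g_1,\omega,g_2)]$ to $g_1\,f_{\Omega}(\omega)\,g_2$; on the other hand $j_{\sigma'\sigma}\circ\xi_{\sigma'}$ sends it to $j_{\sigma'\sigma}(g_1\,\omega\,g_2)$, and since $j_{\sigma'\sigma}$ restricts to the toric open immersion $\overline{f}$ on big cells and is equivariant, the two agree. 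The conceptually robust way to organize this—and the step I expect to require the most care—is to note that both composites are $(G_k\times_k G_k)$-equivariant morphisms into the \emph{separated} scheme $\overline{(G_k)_{\sigma}}$ (separatedness coming from \Cref{quasiprojectivity} and from the quasi-projectivity of the classical embedding) that coincide on the schematically dense open subscheme $G_k$, and therefore coincide everywhere. Granting this, the family $\{\xi_{\sigma}\}_{\sigma\in\Sigma}$ constitutes an isomorphism of gluing data, which descends to an isomorphism $(X_{\Sigma})_k\xrightarrow{\sim}\overline{(G_k)_{\Sigma}}$; as each $\xi_{\sigma}$ respects the embedding of $G_k$, so does the glued isomorphism, giving the asserted compatibility with the embeddings of $G_k$.
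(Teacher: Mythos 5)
Your proposal is correct, and its first half is exactly the paper's proof: both arguments reduce to the single-cone case by observing that base change along $\Spec(k)\to\Spec(\mathbb{Z})$ commutes with gluing along open immersions, so that $(X_{\Sigma})_k$ is the gluing of the $(X_{\sigma})_k$ along the $(F_{\sigma'\sigma})_k$, and both then invoke \Cref{geometricfiber}. Where you diverge is the finishing step. The paper never matches gluing data cone by cone: it merely records that the glued space contains the big cell $\overline{\Omega_{\Sigma}}_k=U^-_k\times_k(\overline{T_{\Sigma}})_k\times_k U^+_k$ and equals $(G_k\times_k G_k)\cdot\overline{\Omega_{\Sigma}}_k$, and then appeals to the classification and big-cell structure in \Cref{toroidalembeddingtheorem} to identify it with $\overline{(G_k)_{\Sigma}}$, thereby sidestepping the compatibility squares entirely. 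You instead construct the isomorphism explicitly as an isomorphism of gluing data, verifying $\xi_{\sigma}\circ(F_{\sigma'\sigma})_k=j_{\sigma'\sigma}\circ\xi_{\sigma'}$ by the (valid) observation that two morphisms from a reduced scheme to a separated scheme which agree on a schematically dense open subscheme are equal; note that equivariance plays no real role there, while reducedness of $(X_{\sigma'})_k$ (which holds since it is a variety by \Cref{geometricfiber}) is what makes the density argument legitimate. Your route is more self-contained in that it does not use the uniqueness part of the classification, but it needs one classical input not explicitly recorded in \Cref{toroidalembeddingtheorem}: that $\overline{(G_k)_{\Sigma}}$ is itself the gluing of the $\overline{(G_k)_{\sigma}}$ along the face open immersions $j_{\sigma'\sigma}$. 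That fact is standard (it follows from \cite[Proposition~6.2.3]{BrionKumar}), so this is a presentational rather than a mathematical gap; either finishing step is sound.
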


\begin{proof}
   By the construction of $X_{\Sigma}$ and \Cref{geometricfiber}, $(X_{\Sigma})_k$ is a gluing of $(X_{\sigma})_k$ ($\sigma$ ranges over cones in $\Sigma$) along the $k$-morphisms $(F_{\sigma',\sigma})_k$. In particular, $(X_{\Sigma})_k$ contains $\overline{\Omega_{\Sigma}}_k\coloneq U^-_k\times_k(\overline{T_{\Sigma}})_k\times_k U^+_k$ where $(\overline{T_{\Sigma}})_k$ is the toric variety of $T_k$ corresponding to $\Sigma$. Moreover, we also have $(X_{\Sigma})_k= (G_k\times_k G_k)\cdot \overline{\Omega_{\Sigma}}_k$. Finally, we can conclude thanks to \Cref{toroidalembeddingtheorem}.
\end{proof}

\begin{remark}
    It is also possible to define $X_{\Sigma}$ as the fppf quotient sheaf of $G\times_S \overline{\Omega_{\Sigma}}\times_S G$ with respect to a similar equivalence relation obtained by \Cref{equivalencerelation}, where $\overline{\Omega_{\Sigma}}\coloneq U^-\times_S \overline{T_{\Sigma}}\times_S U^+$ and $\overline{T_{\Sigma}}$ is the toric scheme defined by the fan $\Sigma$. The $X_{\Sigma}$ defined in this way is still an algebraic space over $S$. To show that $X_{\Sigma}$ is represented by a scheme, the only way I can imagine is to use the gluing process as above.
\end{remark}

\begin{proposition}\label{geometrycombinatoricis}
    \begin{itemize}
        \item[(1)] $X_{\Sigma}$ is smooth over $S$ if and only if all cones of $\Sigma$ are generated by subsets of bases of $X_{\ast}(T)$.
        \item[(2)] $X_{\Sigma}$ is proper over $S$ if and only if $W\cdot \Sigma$ is a complete fan, i.e., the support of $W\cdot\Sigma$ is the entire $X_{\ast}(T)_{\mathbb{R}}$, where we recall that $W$ is the Weyl group of $G$.
    \end{itemize}
\end{proposition}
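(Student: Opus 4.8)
The plan is to reduce both statements to well-known combinatorial criteria for toric varieties by exploiting the fibral nature of our construction together with the product decomposition of the big cell. Throughout, I will use that $X_{\Sigma}$ is glued from the $X_{\sigma}$ and that each $X_{\sigma}$ contains the big cell $\overline{\Omega_{\sigma}} = U^- \times_S \overline{T_{\sigma}} \times_S U^+$ as an open subscheme whose $(G\times_S G)$-translates cover $X_{\sigma}$ (\Cref{covering}, \Cref{geometricfiber}).

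For the smoothness statement (1), the key observation is that smoothness is $(G\times_S G)$-equivariant and that the translates of the big cells cover $X_{\Sigma}$. Hence $X_{\Sigma}$ is smooth over $S$ if and only if every big cell $\overline{\Omega_{\sigma}}$ is smooth over $S$. Since $U^-$ and $U^+$ are affine spaces (hence smooth over $S$), this reduces to the smoothness of the toric scheme $\overline{T_{\sigma}} = \underline{\Spec}_S(\mathcal{O}_S[\sigma^\vee \cap X^\ast(T)])$ over $S$ for each cone $\sigma \in \Sigma$. The latter is a purely combinatorial condition: $\overline{T_{\sigma}}$ is smooth over $S$ if and only if $\sigma$ is generated by part of a $\mathbb{Z}$-basis of $X_\ast(T)$, which is the standard regularity criterion for affine toric varieties, cf.\ \cite[Chapter~I, Theorem~4]{toroidalembedding} or \cite[Theorem~1.10]{Odatoricvarieties}. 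I would spell out the equivariance argument carefully, noting that since the statement is about smoothness over $S$ and our construction commutes with base change (\Cref{remarkfuncotriality}), it suffices to check it fiberwise using \Cref{geometricfiber}, where it becomes the classical criterion over a field.

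For the properness statement (2), the strategy is to use the valuative criterion of properness and reduce to the toric situation. The crucial input is the orbit structure of the toroidal embedding: the $(G\times_k G)$-orbits of $\overline{(G_k)_{\Sigma}}$ are governed by the $W$-orbits of the cones of $\Sigma$, or equivalently, the torus orbits of the toric variety $\overline{T_{W\cdot\Sigma}}$ associated to the $W$-saturated fan $W\cdot\Sigma$; this reflects the fact that the closed $T$-orbit $\lambda(0)$ meets the unique closed orbit in each big cell and that the Weyl group acts on the collection of cones. Since properness can be checked fiberwise over $S$ (as $X_{\Sigma}$ is of finite presentation and flat over $S$, one uses \cite[corollaire~17.9.5]{EGAIV4} together with the fact that properness is fibral for finitely presented morphisms), I would reduce to $S = \Spec(k)$ with $k$ algebraically closed and invoke \Cref{maintheorem} to identify $(X_{\Sigma})_k$ with the classical toroidal embedding $\overline{(G_k)_{\Sigma}}$. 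The properness of the latter is then equivalent to completeness of the fan $W\cdot\Sigma$ by the toric completeness criterion \cite[Theorem~1.11]{Odatoricvarieties} applied through the standard description of when a toroidal embedding of a reductive group is complete, cf.\ \cite[Proposition~6.2.3]{BrionKumar}.

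The main obstacle I anticipate is the precise bookkeeping in statement (2) connecting properness of the group embedding to the completeness of $W\cdot\Sigma$ rather than of $\Sigma$ itself. The appearance of the Weyl group reflects that the embedding $\overline{(G_k)_{\Sigma}}$ is covered not just by the single big cell but by its $W$-translates under the action of representatives $n_w \in \Norm_G(T)$, so the relevant toric data is the $W$-saturation; making this geometric picture rigorous requires carefully tracking how the simple reflections act on the cones via the morphisms $f_i$ from \Cref{singlereflectrational} and matching this with the classical statement that a toroidal embedding is complete exactly when the associated fan, saturated under the Weyl group action, has full support. Once this dictionary is in place, both directions follow from the corresponding toric statements, and the fibral criteria handle the passage from the field case to arbitrary $S$.
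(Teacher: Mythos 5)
Your argument for part (1) is essentially the paper's: both reduce, via the covering of $X_{\Sigma}$ by $(G\times_S G)$-translates of the big cells (\Cref{covering}) and the fibral smoothness criterion for flat, finitely presented morphisms, to smoothness of the toric piece $\overline{T_{\sigma}}$, which is the standard combinatorial criterion; this part is fine.

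For part (2), however, there is a genuine gap in your reduction to geometric fibers. You assert that ``properness is fibral for finitely presented morphisms'' and cite \cite[corollaire~17.9.5]{EGAIV4}; both claims are wrong. That corollary is the fibral criterion for isomorphisms (and monomorphisms, immersions, etc.), not for properness, and properness is simply not a fibral property of flat, separated, finitely presented morphisms: for $S$ a curve and $s_0$ a closed point, the morphism $S \sqcup (S\setminus\{s_0\}) \longrightarrow S$ is flat, separated and of finite presentation with every fiber finite (hence proper), yet it is not proper, since the image of the closed subset $S\setminus\{s_0\}$ is not closed. What rescues the argument---and what the paper's proof uses---is that the fibers of $X_{\Sigma}$ are moreover \emph{geometrically connected}: by \Cref{geometricfiber} and \Cref{maintheorem} they are the classical toroidal embeddings, which are irreducible varieties. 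With this extra hypothesis one can invoke \cite[corollaire~15.7.11]{EGAIV3}, which applies to separated, flat, finitely presented morphisms with geometrically connected fibers and yields properness of $X_{\Sigma}\rightarrow S$ from properness of all fibers. Once this is corrected, the remainder of your plan for (2)---identifying $(X_{\Sigma})_k$ with $\overline{(G_k)_{\Sigma}}$ and citing \cite[Proposition~6.2.3 (iv)]{BrionKumar}---is exactly the paper's proof; the additional machinery you sketch (valuative criterion, tracking the action of the $f_i$ from \Cref{singlereflectrational} on cones) is unnecessary, since the equivalence between completeness of $\overline{(G_k)_{\Sigma}}$ and completeness of the fan $W\cdot\Sigma$ is already available over an algebraically closed field in \emph{loc.~cit.}
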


\begin{proof}
    For (1), by, for instance \cite[Theorem~1.10]{Odatoricvarieties}, the condition that all cones of $\Sigma$ are generated by subsets of bases of $X_{\ast}(T)$ is equivalent to that $\overline{\Omega_{\Sigma}}$ is smooth over $S$, which is further equivalent to that all fibers of $X_{\Sigma}$ are smooth over residue fields thanks to \Cref{covering}. Since $X_{\sigma}$ is flat and locally of finite presentation over $S$, we then conclude by appealing to \cite[\S~2.4, Proposition~8]{BLR}.

    For (2), by \cite[Proposition~6.2.3 (iv)]{BrionKumar}, (2) holds if $S$ is the spectrum of an algebraically closed field. Since, by \Cref{geometricfiber}, all fibers of $X_{\Sigma}$ are geometrically connected, we conclude by \cite[corollaire~15.7.11]{EGAIV3}.
\end{proof}

\renewcommand{\bibname}{References}

\printbibliography

\end{document}